\documentclass[a4paper,reqno]{amsart}

\parindent=15pt
\parskip=3pt
\setlength{\textwidth}{7in}
\setlength{\oddsidemargin}{-24pt}
\setlength{\evensidemargin}{-24pt}
\setlength{\textheight}{9.2in}
\setlength{\topmargin}{-5pt}

\usepackage{graphicx}
\usepackage{mathrsfs}
\usepackage{amsfonts}
\usepackage{amssymb}
\usepackage{amsmath}
\usepackage{amsthm}
\usepackage{amscd}
\usepackage[all,2cell]{xy}
\usepackage{color}
\usepackage[pagebackref,colorlinks]{hyperref}
\usepackage{enumerate}

\usepackage{comment}
\usepackage{hyperref}
\UseAllTwocells \SilentMatrices

\numberwithin{equation}{section}

\usepackage{comment}

\theoremstyle{plain}
\newtheorem{thm}{Theorem}[section]
\newtheorem{prop}[thm]{Proposition}
\newtheorem{cor}[thm]{Corollary}
\newtheorem{lem}[thm]{Lemma}

\theoremstyle{definition}
\newtheorem{defi}[thm]{Definition}
\newtheorem{exm}[thm]{Example}
\theoremstyle{remark}
\newtheorem{rmk}[thm]{\bf Remark}

\def\g{\gamma}
\def\G{\Gamma}
\def\mg{\mathcal{G}}
\def\xra{\xrightarrow[]{}}
\def\a{\alpha}
\def\b{\beta}

\def\AA{\mathcal{A}}
\def\VV{\mathcal{V}}
\def \Z{\mathbb Z}
\def\-{\text{-}}
\def\TT{\mathcal{T}}
\def\c{\circ}

\newcommand{\End}{\operatorname{End}}
\newcommand{\END}{\operatorname{END}}
\newcommand{\Hom}{\operatorname{Hom}}

\newcommand{\Ann}{\operatorname{Ann}}
\newcommand{\supp}{\operatorname{supp}}
\newcommand{\Iso}{\operatorname{Iso}}

\newcommand{\Ker}{\operatorname{Ker}}

\newcommand{\gr}{\operatorname{gr}}
\newcommand{\KP}{\operatorname{KP}}
\newcommand{\Mod}{\operatorname{Mod}}
\newcommand{\Gr}{\operatorname{Gr}}

\begin{document}

\title{Graded Steinberg algebras and their representations}

\author{Pere Ara}
\address{Department of Mathematics\\
Universitat Auto\'noma de Barcelona\\
 08193 Bellaterra (Barcelona), Spain}
\email{para@mat.uab.cat}

\author{Roozbeh Hazrat}
\address{
Centre for Research in Mathematics\\
Western Sydney University\\
Australia} \email{r.hazrat@westernsydney.edu.au}

\author{Huanhuan Li}
\address{
Centre for Research in Mathematics\\
Western Sydney University\\
Australia} \email{h.li@westernsydney.edu.au}

\author{Aidan Sims}
\address{School of Mathematics and Applied Statistics\\
University of Wollongong\\
NSW 2522, Australia} \email{asims@uow.edu.au}

\subjclass[2010]{22A22, 18B40,16G30}

\keywords{Steinberg algebra, Leavitt path algebra, skew-product, smash product, graded
irreducible representation, annihilator ideal, effective groupoid}

\date{\today}

\begin{abstract}
We study the category of left unital graded modules over the Steinberg algebra of a
graded ample Hausdorff groupoid. In the first part of the paper, we show that this
category is isomorphic to the category of unital left modules over the Steinberg algebra
of the skew-product groupoid arising from the grading. To do this, we show that the
Steinberg algebra of the skew product is graded isomorphic to a natural generalisation of
the the Cohen-Montgomery smash product of the Steinberg algebra of the underlying
groupoid with the grading group. In the second part of the paper, we study the minimal
(that is, irreducible) representations in the category of graded modules of a Steinberg
algebra, and establish a connection between the annihilator ideals of these minimal
representations, and effectiveness of the groupoid.

Specialising our results, we produce a representation of the monoid of graded finitely
generated projective modules over a Leavitt path algebra. We deduce that the lattice of
order-ideals in the $K_0$-group of the Leavitt path algebra is isomorphic to the lattice
of graded ideals of the algebra. We also investigate the graded monoid for Kumjian--Pask
algebras of row-finite $k$-graphs with no sources. We prove that these algebras are
graded von Neumann regular rings, and record some structural consequences of this.
\end{abstract}

\maketitle


\section{Introduction}
There has long been a trend of ``algebraisation'' of concepts from operator theory into
algebra. This trend seems to have started with von Neumann and Kaplansky and their
students Berberian and Rickart to see what properties in operator algebra theory arise
naturally from discrete underlying structures~\cite{kap}. As Berberian puts
it~\cite{berber}, ``if all the functional analysis is stripped away\dots what remains
should stand firmly as a substantial piece of algebra, completely accessible through
algebraic avenues''.

In the last decade, Leavitt path algebras~\cite{ap,amp} were introduced as an
algebraisation of graph $C^*$-algebras~\cite{kprr, raeburn} and in particular
Cuntz--Krieger algebras. Later, Kumjian--Pask algebras~\cite{pchr} arose as an
algebraisation of higher-rank graph $C^*$-algebras~\cite{kp2000}. Quite recently
Steinberg algebras were introduced in~\cite{st,cfst} as an algebraisation of the groupoid
$C^*$-algebras first studied by Renault~\cite{re}. Groupoid $C^*$-algebras include all
graph $C^*$-algebras and higher-rank graph $C^*$-algebras, and Steinberg algebras include
Leavitt and Kumjian--Pask algebras as well as inverse semigroup algebras. More generally,
groupoid $C^*$-algebras provide a model for inverse-semigroup $C^*$-algebras, and the
corresponding inverse-semigroup algebras are the Steinberg algebras of the corresponding
groupoids. All of these classes of algebras have been attracting significant attention,
with particular interest in whether $K$-theoretic data can be used to classify various
classes of Leavitt path algebras, inspired by the Kirchberg--Phillips classification
theorem for $C^*$-algebras~\cite{phillips}.

In this note we study graded representations of Steinberg algebras. For a $\G$-graded
groupoid $\mg$, (i.e., a groupoid $\mg$ with a cocycle map $c:\mg \rightarrow \G$)
Renault proved~\cite[Theorem~5.7]{re} that if $\G$ is a discrete abelian group with
Pontryagin dual $\widehat{\G}$, then the $C^*$-algebra $C^*(\mg\times_c \G)$ of the
skew-product groupoid is isomorphic to a crossed-product $C^*$-algebra $C^*(\mg)\times
\widehat{\G}$. Kumjian and Pask~\cite{kp} used Renault's results to show that if there is
a free action of a group $\G$ on a graph $E$, then the crossed product of graph
$C^*$-algebra  by the induced action is strongly Morita equivalent to $C^*(E/\G)$, where
$E/\G$ is the quotient graph.

Parallelling Renault's work, we first consider the Steinberg algebras of skew-product
groupoids (for arbitrary discrete groups $\G$). We extend Cohen and Montgomery's
definition of the smash product of a graded ring by the grading group (introduced and
studied in their seminal paper~\cite{cm1984}) to the setting of non-unital rings. We then
prove that the Steinberg algebra of the skew-product groupoid is isomorphic to the
corresponding smash product. This allows us to relate the category of graded modules of
the algebra to the category of modules of its smash product. Specialising to Leavitt path
algebras, the smash product by the integers arising from the canonical grading yields an
ultramatricial algebra. This allows us to give a presentation of the monoid of graded
finitely generated projective modules for Leavitt path algebras of arbitrary graphs. In
particular, we prove that this monoid is cancellative. The group completion of this
monoid is called the graded Grothendieck group, $K^{\gr}_0$, which is a crucial invariant
in study of Leavitt path algebras.  It is conjectured~\cite[\S3.9]{haz} that the graded
Grothendieck group is a complete invariant for Leavitt path algebras. We study the
lattice of order ideals of $K^{\gr}_0$ and establish a lattice isomorphism between order
ideals of $K^{\gr}_0$ and graded ideals of Leavitt path algebras.

We then apply the smash product to Kumjian--Pask algebras $\KP_K(\Lambda)$. Unlike
Leavitt path algebras, Kumjian--Pask algebras of arbitrary higher rank graphs are poorly
understood, so we restrict our attention to row finite $k$-graphs with no sources. We
show that the smash product of $\KP_K(\Lambda)$ by $\Z^k$ is also an ultramatricial
algebra. This allows us to show that $\KP_K(\Lambda)$ is a graded von Neumann regular
ring and, as in the case of Leavitt path algebras, its graded monoid is cancellative.
Several very interesting properties of Kumjian--Pask algebras follow as a consequence of
general results for graded von Neumann regular rings.

We then proceed with a systematic study of the irreducible representations of Steinberg
algebras. In~\cite{c}, Chen used infinite paths in a graph $E$ to construct an
irreducible representation of the Leavitt path algebra $E$. These representations were
further explored in a series of papers~\cite{abramsman, araranga, araranga2, hr, ranga}.
The infinite path representations of Kumjian--Pask algebras were also defined in
\cite{pchr}. In the setting of a groupoid $\mg$, the infinite path space becomes the unit
space of the groupoid. For any invariant subset $W$ of the unit space, the free module
$RW$ with basis $W$ is a representation of the Steinberg algebra
$A_{R}(\mg)$~\cite{bcfs}. These representations were used to construct nontrivial ideals
of the Steinberg algebra, and ultimately to characterise simplicity.

For the $\G$-graded groupoid $\mg$,  we introduce what we call $\G$-aperiodic invariant
subsets of the unit space of the groupoid $\mg$. We obtain graded (irreducible)
representations of the Steinberg algebra via these $\G$-aperiodic invariant subsets. We
then describe the annihilator ideals of these graded representations and establish a
connection between these annihilator ideals and effectiveness of the groupoid.
Specialising to the case of Leavitt and Kumjian--Pask algebras we obtain new results
about representations of these algebras.

The paper is organised as follows. In Section~\ref{grsmash}, we recall the background we
need on graded ring theory, and then introduce the smash product $A\#\G$ of an arbitrary
$\G$-graded ring $A$, possibly without unit. We establish an isomorphism of categories
between the category of unital left $A\#\G$-modules and the category of unital left
$\G$-graded $A$-modules. This theory is used in Section~\ref{sectionthree}, where we
consider the Steinberg algebra associated to a $\G$-graded ample groupoid $\mg$. We prove
that the Steinberg algebra of the skew-product of $\mg\times_c \G$ is graded isomorphic
to the smash product of $A_{R}(\mg)$ with the group $\G$.

In Section \ref{sectiondrdr} we collect the facts we need to study the monoid of graded
rings with graded local units. In Section \ref{sectionfive} and Section \ref{sectionsix},
we apply the isomorphism of categories in Section \ref{grsmash} and the graded
isomorphism of Steinberg algebras (Theorem \ref{isothm}) on the setting of Leavitt path
algebras and Kumjian--Pask algebras. Although Kumjian--Pask algebras are a generalisation
of Leavitt path algebras, we treat these classes separately as we are able to study
Leavitt path algebras associated to any arbitrary graph, whereas for Kumjian--Pask
algebras we consider only row-finite $k$-graphs with no sources, as the general case is
much more complicated \cite{rsy, si2}. We describe the monoids of graded finitely
generated projective modules over Leavitt path algebras and Kumjian--Pask algebras, and
obtain a new description of their lattices of graded ideals. In Section
\ref{sectionseven}, we turn our attention to the irreducible representations of Steinberg
algebras. We consider what we call $\G$-aperiodic invariant subset of the groupoid $\mg$
and construct graded simple $A_{R}(\mg)$-modules. This covers, as a special case,
previous work done in the setting of Leavitt path algebras, and gives new results in the
setting of Kumjian--Pask algebras. We describe the annihilator ideals of the graded
modules over a Steinberg algebra and prove that these ideals reflect the effectiveness of
the groupoid.

\section{Graded rings and smash products}\label{grsmash}

\subsection{Graded rings}\label{grtheory}

Let $\Gamma$ be a group with identity $\varepsilon$. A ring $A$ (possibly without unit)
is called a \emph{$\Gamma$-graded ring} if $ A=\bigoplus_{ \gamma \in \Gamma} A_{\gamma}$
such that each $A_{\gamma}$ is an additive subgroup of $A$ and $A_{\gamma}  A_{\delta}
\subseteq A_{\gamma\delta}$ for all $\gamma, \delta \in \Gamma$. The group $A_\gamma$ is
called the $\gamma$-\emph{homogeneous component} of $A.$ When it is clear from context
that a ring $A$ is graded by group $\Gamma,$ we simply say that $A$ is a  \emph{graded
ring}. If $A$ is an algebra over a ring $R$, then $A$ is called a \emph{graded algebra}
if $A$ is a graded ring and $A_{\gamma}$ is a $R$-submodule for any $\gamma \in \Gamma$.
A $\G$-graded ring $A=\bigoplus_{\g\in\G}A_{\g}$is called \emph{strongly graded} if
$A_{\g}A_{\delta}=A_{\g\delta}$ for all $\g,\delta$ in $\G$.

The elements of $\bigcup_{\gamma \in \Gamma} A_{\gamma}$ in a graded ring $A$ are called
\emph{homogeneous elements} of $A.$ The nonzero elements of $A_\gamma$ are called
\emph{homogeneous of degree $\gamma$} and we write $\deg(a) = \gamma$ for $a \in
A_{\gamma}\backslash \{0\}.$ The set $\Gamma_A=\{ \gamma \in \Gamma \mid A_\gamma \not =
0 \}$ is called the \emph{support}  of $A$. We say that a $\Gamma$-graded ring $A$ is
\emph{trivially graded} if the support of $A$ is the trivial group
$\{\varepsilon\}$---that is, $A_\varepsilon=A$, so $A_\gamma=0$ for $\gamma \in \Gamma
\backslash \{\varepsilon\}$. Any ring admits a trivial grading by any group. If $A$ is a
$\G$-graded ring and $s \in A$, then we write $s_\alpha, \alpha \in \G$ for the unique
elements $s_\alpha \in A_\alpha$ such that $s = \sum_{\alpha \in \G} s_\alpha$. Note that
$\{\alpha \in \G : s_\alpha \not= 0\}$ is finite for every $s \in A$.

We say a $\G$-graded ring $A$ has \emph{graded local units} if for any finite set of
homogeneous elements  $\{x_{1}, \cdots, x_{n}\}\subseteq A$, there exists a homogeneous
idempotent $e\in A$ such that $\{x_{1}, \cdots, x_{n}\}\subseteq eAe$. Equivalently, $A$
has graded local units, if $A_\varepsilon$ has local units and $A_\varepsilon
A_{\g}=A_{\g}A_\varepsilon=A_{\g}$ for every $\g \in \G$.

Let $M$ be a left $A$-module. We say $M$ is unital if $AM=M$ and it is \emph{$\G$-graded}
if there is a decomposition $M=\bigoplus_{\g\in\G}M_{\g}$ such that
$A_{\a}M_{\g}\subseteq M_{\a\g}$ for all $\a,\g \in \G$. We denote by $A$-$\mathrm{Mod}$
the category of unital left $A$-modules and by $A$-$\mathrm{Gr}$ the category of
$\G$-graded unital left $A$-modules with morphisms the $A$-module homomorphisms that
preserve grading.

For a graded left $A$-module $M$, we define the $\a$-\emph{shifted} graded left
$A$-module $M(\a)$ as
\begin{equation}\label{eq:M-shifted}
M(\a)=\bigoplus_{\g\in \G}M(\a)_{\g},
\end{equation}
where $M(\a)_{\g}=M_{\g\a}$. That is, as an ungraded module, $M(\alpha)$ is a copy of
$M$, but the grading is shifted by $\alpha$. For $\a\in\G$, the \emph{shift functor}
\begin{equation*}
\mathcal{T}_{\a}: A\text{-}{\Gr}\longrightarrow A\text{-}{\Gr},\quad M\mapsto M(\a)
\end{equation*}
is an isomorphism with the property $\mathcal{T}_{\a}\mathcal{T}_{\b}=\mathcal{T}_{\a\b}$
for $\a,\b\in\G$.

 \subsection{Smash products}\label{smashp}

Let $A$ be a $\Gamma$-graded unital $R$-algebra where $\Gamma$ is a finite group. In the
influential paper~\cite{cm1984}, Cohen and Montgomery introduced the smash product
associated to $A$, denoted by $A\#R[\Gamma]^*$. They proved two main theorems, duality
for actions and coactions, which related the smash product to the ring $A$. In turn,
these theorems relate the graded structure of $A$ to non-graded properties of $A$. The
construction has been extended to the case of infinite groups (see for
example~\cite{be,shvan} and \cite[\S7]{grrings}). We need to adopt the construction of
smash products for algebras with local units as the main algebras we will be concerned
with are Steinberg algebras which are not necessarily unital but have local units. The
main theorem of Section~\ref{sectionthree}  shows that the Steinberg algebra of the
skew-product of a groupoid by a group can be represented using the smash product
construction (Theorem~\ref{isothm}).

We start with a general definition of smash product for any ring.

\begin{defi}
For a $\G$-graded ring $A$ (possibly without unit), the \emph{smash product} ring $A\#\G$
is defined as the set of all formal sums $\sum_{\gamma \in \G} r^{(\gamma)} p_\gamma $,
where $r^{(\gamma)}\in A$ and $p_\gamma$ are symbols. Addition is defined component-wise
and multiplication is defined by linear extension of the rule
$(rp_{\a})(sp_{\b})=rs_{\a\b^{-1}}p_{\b},$  where $r,s\in A$ and $\a,\b\in\G$.
\end{defi}

It is routine to check that $A\#\G$ is a ring. We emphasise that the symbols $p_{\g}$ do
not belong to $A\#\G$; however if the ring $A$ has unit, then we regard the $p_\g$ as
elements of $A\#\G$ by identifying $1_A p_\g$ with $p_\g$. Each $p_{\g}$ is then an
idempotent element of $A\#\G$. In this case $A\#\G$ coincides with the ring $A\#\G^{*}$
of \cite{be}. If $\G$ is finite, then $A\#\G$ is the same as the smash product
$A\#k[\G]^{*}$ of \cite{cm1984}.  Note that $A\#\G$ is always a $\Gamma$-graded ring with
\begin{equation}\label{sydhyhy}
(A\#\G)_\gamma=\sum_{\alpha \in \G}A_\gamma p_\alpha.
\end{equation}

Next we define a shift functor on $A\#\G\-\Mod$. This functor will coincide with the
shift functor on $A$-$\mathrm{Gr}$ (see Proposition~\ref{kikiki}).  This does not seem to
be exploited in the literature  and will be crucial in our study of $K$-theory of Leavitt
path algebras (\S\ref{stlib}).

For each $\a\in \G$, there is an algebra automorphism
\[
\mathcal{S}^{\a}: A\#\G \longrightarrow A\#\G
\]
such that $\mathcal{S}^{\a}(sp_{\b})=sp_{\b\a}$ for $sp_{\b}\in A\#\G$ with $s\in A$ and
$\b\in\G$. We sometimes call $\mathcal{S}^\a$ the \emph{shift map} associated to $\a$.
For $M\in A\#\G\-\Mod$ and $\a\in\G$, we obtain a shifted $A\#\G$-module
$\mathcal{S}^\alpha_* M$ obtained by setting $\mathcal{S}^\alpha_* M := M$ as a group,
and defining the left action by $a \cdot_{\mathcal{S}^\alpha_* M} m :=
\mathcal{S}^\alpha(a) \cdot_M m$. For $\a\in\G$, the \emph{shift functor}
\begin{equation*}
\widetilde{\mathcal{S}}^{\a}: A\#\G\text{-}\Mod \longrightarrow A\#\G\-\Mod,\quad M \mapsto  \mathcal{S}^\alpha_* M
\end{equation*}
is an isomorphism satisfying $\widetilde{\mathcal{S}}_\a \widetilde{\mathcal{S}}_\b =
\widetilde{\mathcal{S}}_{\a\b}$ for $\a,\b\in\G$.

If $A$ is a unital ring then $A\#\G$ has local units (\cite[Proposition 2.3]{be})). We
extend this to rings with graded local units.

\begin{lem}\label{lemma-hl}
Let $A$ be a $\Gamma$-graded ring with graded local units. Then the ring $A\#\G$ has
graded local units.
\end{lem}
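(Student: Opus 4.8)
The plan is to build, from a homogeneous local unit of $A$ for the relevant elements, an explicit homogeneous idempotent $E$ of $A\#\G$. First I would unwind what a finite set of homogeneous elements of $A\#\G$ looks like. By the grading \eqref{sydhyhy}, any homogeneous element $X$ of degree $d$ has the form $X=\sum_{\alpha\in\G}a^{(\alpha)}p_\alpha$ with each $a^{(\alpha)}\in A_d$ and only finitely many nonzero. So given homogeneous $X_1,\dots,X_n\in A\#\G$ with $\deg X_i=d_i$ and $X_i=\sum_\alpha a_i^{(\alpha)}p_\alpha$, the collection $\{a_i^{(\alpha)}\}$ is a finite set of homogeneous elements of $A$. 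Since $A$ has graded local units, I may choose a single homogeneous idempotent $e\in A$ with all $a_i^{(\alpha)}\in eAe$; because $e$ is a homogeneous idempotent it must lie in $A_\varepsilon$, and $a_i^{(\alpha)}\in eAe$ forces $e a_i^{(\alpha)}=a_i^{(\alpha)}=a_i^{(\alpha)}e$.

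Next I define the candidate idempotent. Let $T_0=\{\alpha: a_i^{(\alpha)}\neq 0 \text{ for some } i\}$, a finite set, and for a finite $T\supseteq T_0$ to be pinned down below set $E=\sum_{\beta\in T}e p_\beta$. Using the product rule $(rp_\alpha)(sp_\beta)=rs_{\alpha\beta^{-1}}p_\beta$ together with $e\in A_\varepsilon$, the cross terms $(ep_\beta)(ep_{\beta'})$ vanish unless $\beta=\beta'$, so $E^2=E$; and $E\in(A\#\G)_\varepsilon$ by \eqref{sydhyhy}. Right absorption is then immediate: $(a_i^{(\alpha)}p_\alpha)(ep_\beta)=a_i^{(\alpha)}e_{\alpha\beta^{-1}}p_\beta$ is nonzero only when $\alpha=\beta$, giving $X_iE=\sum_\alpha a_i^{(\alpha)}ep_\alpha=X_i$ as soon as $T\supseteq T_0$.

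I expect the one delicate point to be left absorption, where the $p$-index gets shifted by the degree. Indeed $(ep_\beta)(a_i^{(\alpha)}p_\alpha)=e\,(a_i^{(\alpha)})_{\beta\alpha^{-1}}p_\alpha$ is nonzero only when $\beta\alpha^{-1}=d_i$, that is $\beta=d_i\alpha$; so the term of $X_i$ at index $\alpha$ survives in $EX_i$ only if $d_i\alpha\in T$. Hence the naive choice $T=T_0$ fails in general. The fix is to enlarge the support: take $T=T_0\cup\{d_i\alpha: 1\le i\le n,\ \alpha\in T_0\}$, which is still finite. With this $T$, for every $\alpha\in T_0$ the index $d_i\alpha$ lies in $T$, so every surviving term is recovered and $EX_i=\sum_\alpha ea_i^{(\alpha)}p_\alpha=X_i$.

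Combining the two absorption identities gives $EX_iE=X_i$, whence $X_i\in E(A\#\G)E$ for each $i$. Since $E$ is a homogeneous idempotent, this exhibits the required graded local unit and completes the argument. The only genuinely nontrivial step is recognising that left multiplication shifts the symbol index by the homogeneous degree, forcing the enlargement of $T$; everything else is a routine verification with the multiplication rule.
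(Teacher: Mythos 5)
Your proof is correct and follows essentially the same route as the paper: choose a homogeneous idempotent $e\in A_\varepsilon$ absorbing all the $A$-coefficients, and form $\sum_{\beta\in T}ep_\beta$ where $T$ contains both the symbol indices $\alpha$ and their degree-shifted images $d_i\alpha$ (the paper's set $Y=\{\a_i\}\cup\{\g_i\a_i\}$ is exactly your $T$ after its preliminary reduction to single terms $r_ip_{\a_i}$). The one delicate point you flag -- that left multiplication shifts the symbol index by the degree, forcing the enlargement of the index set -- is precisely the point the paper's choice of $Y$ is designed to handle.
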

\begin{proof}
Take a finite subset $X=\{x_{1}, x_{2}, \cdots x_{n}\}\subseteq A\#\G$ such that all
$x_{i}$ are homogeneous elements. Since homogeneous elements of $A\#\G$ are sums of
elements of the form $rp_{\a}$ for $r\in A$ a homogeneous element and $\a\in\G$, we may
assume that $x_{i}=r_{i}p_{\a_{i}}$, $1\le i \leq n$, where $r_{i}\in A$ are homogeneous
of degree $\g_{i}$ and $\a_{i}\in \G$. Since $A$ has graded local units, there exists a
homogenous idempotent $e\in A$ such that $er_{i}=r_{i}e=r_{i}$ for all $i$. Consider the
finite set \begin{equation*} Y=\{\g\in \G \mid  \g=\a_{i} \text{~or~} \g=\g_{i}\a_{i}
\text{~for~}  1\leq i \leq n\},
\end{equation*} and let $w=\sum_{\g\in Y}ep_{\g}$. Since the idempotent $e\in A$ is homogeneous, $w$ is a
homogeneous element  of $A\#\G$. It is easy now to check that  $w^{2}=w$ and $wx_{i}= x_i
=x_{i}w$ for all $i$.
\end{proof}

As we will see in Sections~\ref{sectionfive} and \ref{sectionsix}, smash products of
Leavitt path algebras or of Kumjian--Pask algebras are ultramatricial algebras, which are
very well-behaved. This allows us to obtain results about the path algebras via their
smash product. For example, ultramatricial algebras are von Neumann regular rings. The
following lemma allows us to exploit this property (see Theorems~\ref{poryt},
\ref{poryt1}). Recall that a graded ring is called \emph{graded von Neumann regular} if
for any homogeneous element $a$, there is an element $b$ such that $aba=a$.

\begin{lem}\label{grneumann}
Let $A$ be a $\G$-graded ring (possibly without unit). Then $A\# \G$ is graded von
Neumann regular if and only if $A$ is graded von Neumann regular.
\end{lem}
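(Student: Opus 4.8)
The plan is to prove both directions directly from the definitions, exploiting the multiplication rule $(rp_\a)(sp_\b)=rs_{\a\b^{-1}}p_\b$ together with the grading \eqref{sydhyhy}, which tells us that a homogeneous element of $A\#\G$ of degree $\g$ is a finite sum $\sum_\a a^{(\a)}p_\a$ with each $a^{(\a)}\in A_\g$. The key observation I would record first is that multiplication in $A\#\G$ is ``diagonal'' with respect to the $p$-symbols in a convenient sense: if I fix an element $sp_\b$ on the right, then computing $(rp_\a)(sp_\b)$ only involves the homogeneous component $r_{\a\b^{-1}}$ of $r$. This means that to invert a homogeneous element of $A\#\G$ we can work one $p_\a$-slot at a time, reducing everything to the graded von Neumann regularity of $A$ itself.

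For the direction where $A$ is graded von Neumann regular, I would take a homogeneous element $x\in(A\#\G)_\g$ and write $x=\sum_{\a\in F}a_\a p_\a$ for a finite set $F\subseteq\G$, with each $a_\a\in A_\g$. For each $\a$, graded regularity of $A$ supplies an element $b_\a\in A$ with $a_\a b_\a a_\a=a_\a$; by taking homogeneous components I may assume $b_\a\in A_{\g^{-1}}$. The natural candidate inverse is $y=\sum_{\a\in F}b_\a p_{\g^{-1}\a}$, and the point is to choose the $p$-subscripts so that the multiplication rule lines the slots up correctly: computing $xyx$ and using $(a_\a p_\a)(b_\a p_{\g^{-1}\a})=a_\a (b_\a)_{\a(\g^{-1}\a)^{-1}}p_{\g^{-1}\a}=a_\a b_\a p_{\g^{-1}\a}$ (since $\a(\g^{-1}\a)^{-1}=\g$ and $b_\a$ already lies in $A_{\g^{-1}}$, so $(b_\a)_\g=0$ unless $\g^{-1}=\g$—this is exactly the slot-bookkeeping I must get right), the cross terms involving $\a\neq\a'$ should vanish because the intermediate $p$-subscripts fail to match, leaving $xyx=\sum_\a a_\a b_\a a_\a p_\a=\sum_\a a_\a p_\a=x$.

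For the converse, suppose $A\#\G$ is graded von Neumann regular and let $a\in A_\g$ be homogeneous. I would embed $a$ into $A\#\G$ as the homogeneous element $x=ap_\varepsilon$ (of degree $\g$), apply graded regularity to obtain a homogeneous $y\in A\#\G$ with $xyx=x$, and then read off the $\varepsilon$-slot. Writing $y=\sum_\b b_\b p_\b$ with $b_\b\in A_{\g^{-1}}$, the product $xyx=(ap_\varepsilon)\big(\sum_\b b_\b p_\b\big)(ap_\varepsilon)$ collapses, by the multiplication rule, to a single term in $p_\varepsilon$ of the form $a b' a\, p_\varepsilon$ for an appropriate homogeneous component $b'$ of some $b_\b$; equating with $ap_\varepsilon$ yields $ab'a=a$ in $A$, so $A$ is graded von Neumann regular.

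The main obstacle I anticipate is purely the index bookkeeping in the multiplication rule: because the rule $(rp_\a)(sp_\b)=rs_{\a\b^{-1}}p_\b$ selects a homogeneous component of the middle factor depending on $\a$ and $\b$, I must choose the $p$-subscripts of the candidate inverse $y$ so that each term of $x$ is matched with exactly the right term of $y$ and all other products are annihilated. Getting these subscripts consistent—so that surviving products land in the correct slot and the regularity relation $a_\a b_\a a_\a=a_\a$ is actually the quantity produced—is the delicate part; once the correct indexing is pinned down, both verifications are routine computations with the multiplication rule and the decomposition \eqref{sydhyhy}.
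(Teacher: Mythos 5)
Your overall strategy coincides with the paper's, and your converse direction (embedding $a\in A_\g$ as $ap_\varepsilon$ and reading off the $p_\varepsilon$-slot of $(ap_\varepsilon)\,y\,(ap_\varepsilon)=ap_\varepsilon$) is correct. However, there is a genuine error in the direction ``$A$ graded regular $\Rightarrow$ $A\#\G$ graded regular'': your candidate quasi-inverse $y=\sum_{\a\in F}b_\a p_{\g^{-1}\a}$ does not work. By the multiplication rule,
\[
(a_\a p_\a)(b_\a p_{\g^{-1}\a}) \;=\; a_\a\,(b_\a)_{\a(\g^{-1}\a)^{-1}}\,p_{\g^{-1}\a} \;=\; a_\a\,(b_\a)_{\g}\,p_{\g^{-1}\a},
\]
and since $b_\a\in A_{\g^{-1}}$ the component $(b_\a)_{\g}$ vanishes unless $\g^{2}=\varepsilon$. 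So for a general group element $\g$ your choice gives $xy=0$ rather than $xyx=x$; your own parenthetical remark (``$(b_\a)_\g=0$ unless $\g^{-1}=\g$'') records exactly this obstruction but leaves it unresolved, so the verification as written fails.

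The fix is to shift the subscripts by $\g$ on the left rather than by $\g^{-1}$: take $y=\sum_{\a\in F}b_\a p_{\g\a}$, which still lies in $(A\#\G)_{\g^{-1}}$ by \eqref{sydhyhy} (homogeneity is insensitive to the $p$-subscripts). Then $\a(\g\b)^{-1}=\a\b^{-1}\g^{-1}$ equals $\deg(b_\b)=\g^{-1}$ precisely when $\a=\b$, so the cross terms die and $(a_\a p_\a)(b_\a p_{\g\a})=a_\a b_\a p_{\g\a}$; similarly $(a_\a b_\a p_{\g\a})(a_\b p_\b)$ selects the component $(a_\b)_{\g\a\b^{-1}}$, which survives only for $\b=\a$, giving $xyx=\sum_{\a}a_\a b_\a a_\a p_\a=x$. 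This corrected choice of subscript is exactly the one made in the paper's proof; with it, your argument agrees with the paper's in both directions.
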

\begin{proof}
Suppose $A\#\G$ is graded regular and $a\in A_\gamma$, for some $\gamma \in G$. Since
$ap_e \in (A\#\G)_\g$ (see~(\ref{sydhyhy})), there is an element $\sum_{\alpha\in \G}
b_{\gamma_\alpha} p_\alpha \in (A\#\G)_{\gamma^{-1}}$ with
$\deg(b_{\gamma_\alpha})=\gamma^{-1}$, $\alpha \in \G$, such that
\[
    ap_e\Big(\sum_{\alpha\in \G} b_{\gamma_\alpha} p_\alpha\Big) ap_e=ap_e.
\]
This identity reduces to $ab_{\gamma_\g} ap_e=ap_e$. Thus $ab_{\gamma_\g} a=a$. This
shows that $A$ is graded regular.

Conversely, suppose $A$ is graded regular and $x:=\sum_{\alpha\in \G} a_{\gamma_\alpha}
p_\alpha \in (A\#\G)_\gamma$. By (\ref{sydhyhy}) we have
$\deg(a_{\gamma_\alpha})=\gamma$, $\alpha \in \G$.  Then there are
$b_{\gamma^{-1}_\alpha} \in A_{\gamma^{-1}}$ such that
$a_{\gamma_\alpha}b_{\gamma^{-1}_\alpha}a_{\gamma_\alpha}=a_{\gamma_\alpha}$, for $\alpha
\in \G$. Consider the element $y:=\sum_{\alpha\in \G} b_{\gamma^{-1}_\alpha} p_{\g\alpha}
\in (A\#\G)_{\gamma^{-1}}$. One can then check that $xyx=x$. Thus $A\#\G$ is graded
regular.
\end{proof}

\subsection{An isomorphism of module categories} \label{subsection31}
In this section we first prove that, for a $\Gamma$-graded ring $A$ with graded local
units,  there is an isomorphism between the categories $A\#\G$-$\mathrm{Mod}$ and
$A$-$\mathrm{Gr}$ (Proposition~\ref{proposition}). This is a generalisation
of~\cite[Theorem~2.2]{cm} and~\cite[Theorem~2.6]{be}. We check that the isomorphism
respects the shifting in these categories. This in turn translates the shifting of
modules in the category of graded modules to an action of the group on the category of
modules for the smash-product. Since graded Steinberg algebras have graded local units,
using this result and Theorem~\ref{isothm}, we obtain a shift preserving isomorphism
\[
    A_R(\mg\times_c \G) \text{-} \mathrm{Mod} \cong A_R(\mg) \text{-} \mathrm{Gr}.
\]

In Section \ref{sectionfive} we will use this in the setting of Leavitt path algebras to
establish an isomorphism between the category of graded modules of $L_{R}(E)$ and the
category of modules of $L_{R}(\overline E)$, where $\overline E$ is the covering graph of
$E$ (\S\ref{covergraph}). This yields a presentation of the monoid of  graded finitely
generated projective modules of a Leavitt path algebra.

We start with the following fact, which extends~\cite[Corollary 2.4]{be} to rings with
local units.

\begin{lem} \label{lemma-unital}
Let $A$ be a $\G$-graded ring with a set of graded local units $E$. A left $A\#\G$-module
$M$ is unital if and only if for every finite subset $F$ of $M$, there exists
$w=\sum_{i=1}^{n}up_{\g_{i}}$ with $\g_{i}\in \G$, and $u\in E$ such that $wx=x$ for all
$x\in F$.
\end{lem}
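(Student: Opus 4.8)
The statement is an equivalence whose two directions are of very unequal difficulty, so the plan is to dispose of the ``if'' direction at once and then concentrate on building the element $w$ for the ``only if'' direction. For the ``if'' direction, suppose the stated local-unit condition holds and take any $x\in M$. Applying the hypothesis to the singleton $F=\{x\}$ produces $w\in A\#\G$ with $wx=x$, so $x=wx\in(A\#\G)M$; thus $M\subseteq(A\#\G)M$, and since the reverse inclusion is automatic we get $M=(A\#\G)M$, i.e.\ $M$ is unital.

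For the ``only if'' direction, I would first reduce the problem from $M$ to $A\#\G$. Since $M$ is unital, every $x\in F$ can be written as a finite sum $x=\sum_\ell b_{\ell}m_{\ell}$ with $b_\ell\in A\#\G$ and $m_\ell\in M$. If I can find a single $w$ of the prescribed form that fixes every $b$ in the finite collection $\{b_\ell\}$ arising from all $x\in F$, then $wx=\sum_\ell(wb_\ell)m_\ell=x$ for each $x\in F$; so it suffices to fix finitely many elements of $A\#\G$. Next I decompose: writing each such $b=\sum_{\delta\in\G}r^{(\delta)}p_\delta$ (finite support) and splitting each $r^{(\delta)}=\sum_\theta(r^{(\delta)})_\theta$ into homogeneous components, I collect the finite set $S$ of all nonzero homogeneous pieces $(r^{(\delta)})_\theta$, the finite set $\Theta=\{\deg r:r\in S\}$ of their degrees, and the finite set $D$ of all $\delta$ that occur. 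Because $A$ has graded local units, I may choose a \emph{single} $u\in E$ with $ur=ru=r$ for every $r\in S$, and then set $Y=\{\theta\delta:\theta\in\Theta,\ \delta\in D\}$ and $w=\sum_{\gamma\in Y}up_\gamma$, which has exactly the form required by the statement.

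It remains to verify $wb=b$. Using the multiplication rule in the form $(up_\gamma)(r^{(\delta)}p_\delta)=u(r^{(\delta)})_{\gamma\delta^{-1}}p_\delta$ together with the fact that $u$ fixes each homogeneous component lying in $S$, I compute $wb=\sum_\delta\big(\sum_{\gamma\in Y}(r^{(\delta)})_{\gamma\delta^{-1}}\big)p_\delta$. The decisive point is the bookkeeping in the inner sum: the map $\gamma\mapsto\gamma\delta^{-1}$ is a bijection of $\G$, so the shifted degrees $\gamma\delta^{-1}$ are pairwise distinct as $\gamma$ ranges over $Y$ and no homogeneous component is counted twice; and since $\Theta\delta\subseteq Y$ for every $\delta\in D$, each degree in $\Theta$ is recovered exactly once while the remaining terms vanish. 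Hence the inner sum collapses to $r^{(\delta)}$ and $wb=b$, which completes the argument.

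The main obstacle is precisely this last multiplicity computation. The element $w$ must act as the identity on each $b$, and the degree-shift $\gamma\mapsto\gamma\delta^{-1}$ hard-wired into the smash-product multiplication means the index set $Y$ has to be chosen so that every homogeneous degree appearing in $b$ is picked up once and only once after that shift; choosing $Y=\Theta D$ achieves this uniformly over all the relevant $\delta$. Everything else---the reduction via unitality, the homogeneous decomposition, and the extraction of a common graded local unit $u$---is routine once $S$, $\Theta$, $D$, and $u$ have been isolated.
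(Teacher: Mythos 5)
Your proof is correct and follows essentially the same route as the paper: the ``if'' direction is identical, and for ``only if'' you reduce to fixing the finitely many coefficients in $A\#\G$ and then build $w=\sum_{\gamma\in\Theta D}up_\gamma$ from a common graded local unit $u$ of $A$. The only difference is that the paper outsources this last construction to its Lemma~2.3 (that $A\#\G$ inherits graded local units, proved there with the same index set $Y$), whereas you carry out the degree-shift bookkeeping inline.
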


\begin{proof} Suppose that $M$ is unital. Then each $m\in F$ may be written as
$m = \sum_{n \in G_m} y_n n$ for some finite $G_m \subseteq M$ and choice of scalars
$\{y_n : n \in G_m\} \subseteq A\#\G$. Let $T := \bigcup_{m \in F} G_m$. By
Lemma~\ref{lemma-hl}, there exists a finite set $Y$ of $\G$ such that $w=\sum_{\g\in
Y}up_{\g}$ satisfies $wy=y$ for all $y\in T$. So $wm=m$ for all $m\in F$.

Conversely, for $m\in M$, take $F=\{m\}$. Then there exists $w$ such that $m=wm\in
(A\#\G)M$; that is, $(A\#\G)M = M$.
\end{proof}

\begin{prop} \label{proposition}\label{kikiki}
Let $A$ be a $\G$-graded ring with graded local units. Then there is an isomorphism of
categories $A\-\mathrm{Gr}\xrightarrow[]{\sim} A\#\G$-$\mathrm{Mod} $ such that the following
diagram commutes for every $\a \in \G$.
\begin{equation}\label{inducedshift}
\xymatrix{ A\-\Gr\ar[r]^(.4){\sim}\ar[d]^{\mathcal{T}_{\a}}&A\#\G\-\Mod\ar[d]^{\widetilde{\mathcal{S}}^{\a}}\\
A\-\Gr\ar[r]^(.4){\sim}&A\#\G\-\Mod
}
\end{equation}
\end{prop}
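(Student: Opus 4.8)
The plan is to construct the equivalence functor explicitly and then verify naturality of the shift. Given a graded module $M=\bigoplus_{\g}M_\g \in A\text{-}\Gr$, I would set $F(M):=M$ as an abelian group and define the $A\#\G$-action on homogeneous pieces by $(sp_\b)\cdot m := s\,m_\b$ for $s\in A$, $\b\in\G$, $m\in M$, where $m_\b$ is the $\b$-homogeneous component of $m$ (extending additively over the formal sums in $A\#\G$). One checks this is a well-defined action using the multiplication rule $(rp_\a)(sp_\b)=rs_{\a\b^{-1}}p_\b$: indeed $(rp_\a)\cdot\big((sp_\b)\cdot m\big)=(rp_\a)\cdot(s\,m_\b)=r\,(s\,m_\b)_\a$, and since $s\,m_\b\in A_? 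M_\b$ its $\a$-component is $s_{\a\b^{-1}}m_\b$, matching $\big((rp_\a)(sp_\b)\big)\cdot m = (rs_{\a\b^{-1}}p_\b)\cdot m = rs_{\a\b^{-1}}m_\b$. The module $F(M)$ is unital by Lemma~\ref{lemma-unital}, since a graded local unit $e$ for the components of a finite set produces the required $w=\sum_\g ep_\g$. On morphisms, $F$ is the identity on underlying maps, and a graded $A$-homomorphism is automatically $A\#\G$-linear.

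Next I would build the inverse functor $G$. Given a unital $N\in A\#\G\text{-}\Mod$, the elements $\{p_\g\}$ (realised through the graded local units as in Lemma~\ref{lemma-unital}) behave like a complete family of orthogonal idempotents, so I would set $N_\g := p_\g N := \{\,wx : x\in N\,\}$ appropriately, more precisely the image of the ``action of $p_\g$'' defined via the local-unit elements $up_\g$; unitality guarantees $N=\bigoplus_\g N_\g$ as an internal direct sum. The $A$-action is recovered by $a\cdot x := \sum_\g (a p_\g)\cdot x$, which is a finite sum on each $x$ and carries $N_\b$ into $N_{\a\b}$ when $a\in A_\a$, giving the grading condition $A_\a N_\b\subseteq N_{\a\b}$. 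The main verification is that $F$ and $G$ are mutually inverse \emph{on the nose} (isomorphism of categories, not merely equivalence): $G(F(M))=M$ with the same grading because $p_\g$ acts as projection onto $M_\g$, and $F(G(N))=N$ because the reconstructed grading and action reproduce the original $A\#\G$-structure. This is the generalisation of \cite[Theorem~2.2]{cm} and \cite[Theorem~2.6]{be}, and the only real work beyond those references is checking that graded local units (rather than a global unit) suffice, which is exactly what Lemma~\ref{lemma-unital} provides.

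Finally I would verify that diagram~(\ref{inducedshift}) commutes. Tracing $M\in A\text{-}\Gr$ around the two paths, the left-then-bottom route gives $F(\mathcal{T}_\a M)=F(M(\a))$, while the top-then-right route gives $\widetilde{\mathcal{S}}^\a(F(M))=\mathcal{S}^\a_* F(M)$. Both are $M$ as an abelian group, so it suffices to check the $A\#\G$-actions agree. On $F(M(\a))$ the action of $sp_\b$ picks out the $\b$-component \emph{of $M(\a)$}, namely $M(\a)_\b=M_{\b\a}$, yielding $s\,m_{\b\a}$. On $\mathcal{S}^\a_* F(M)$ the action is $(sp_\b)\cdot_{\mathcal{S}^\a_*} m = \mathcal{S}^\a(sp_\b)\cdot_{F(M)} m = (sp_{\b\a})\cdot_{F(M)} m = s\,m_{\b\a}$, using $\mathcal{S}^\a(sp_\b)=sp_{\b\a}$. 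These coincide, so the square commutes. I expect the main obstacle to be bookkeeping the idempotents $p_\g$ correctly in the non-unital setting—since $p_\g\notin A\#\G$, every occurrence must be mediated through genuine elements $up_\g$ with $u\in E$, and one must confirm that the resulting decomposition $N=\bigoplus_\g N_\g$ is well-defined and independent of the choice of local unit. Once the module-category isomorphism is nailed down, the shift compatibility is a short direct computation as above.
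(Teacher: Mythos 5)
Your proposal is correct and follows essentially the same route as the paper: the same action formula $(sp_\b)\cdot m = sm_\b$ in one direction, the same decomposition $N_\g=\sum_{u\in E}up_\g N$ mediated by graded local units in the other, unitality via Lemma~\ref{lemma-unital}, and the identical computation $s\,m_{\b\a}$ on both sides for the commutativity of the square. The only cosmetic difference is that you build the functor $A\-\Gr\to A\#\G\-\Mod$ first whereas the paper starts with its inverse, and the paper writes out in full the directness of the sum $\sum_\g N_\g$ (via $ep_\a x = x$ versus $ep_\a x = 0$), which you correctly flag as the main point to verify.
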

\begin{proof} We first define a functor $\phi: A\#\G\-\Mod\xra A\-\Gr$ as follows. Fix a set $E$ of graded local units for $A$. Let $M$ be a unital left $A\#\G$-module. We
view $M$ as a $\G$-graded left $A$-module $M'$ as follows. For each $\g \in \G$, define
\[
M_{\g}' := \sum_{u\in E}up_{\g}M.
\]

We first show that for $\a \in \G$, we have $M_{\a}' \cap \sum_{\g\in\G, \g\not =
\a}M_{\g}'= \{0\}$. Suppose this is not the case, so there exist finite index sets $F$
and $\{F'_\g : \g \in \G\}$ (only finitely many nonempty), elements $\{u_i : i \in F\}$
and $\{v_{\g,j} : \g \in \G\text{ and }j \in F'_\g\}$ in $E$, and elements $\{m_i : i \in
F\}$ and $\{n_{\g,j} : \g \in \G\text{ and }j \in F'_\g\}$ such that
\[
x=\sum_{i\in F}u_{i}p_{\a}m_{i}=\sum_{\g\in \G,\g\not = \a} \sum_{j\in F'_\g}v_{\g, j}p_{\g}n_{\g, j},
\]
Fix $e\in E$ such that $eu_{i}=u_{i}=u_{i}e$ for all $i\in F$. Using that the $u_{i}$ are
homogeneous elements of trivial degree at the second equality, we have
\begin{equation*}
ep_{\a}x
    =\sum_{i\in F}(ep_{\a}u_{i}p_{\a})m_{i}
    =\sum_{i\in F} eu_{i}p_{\a}m_{i}
    =x.
\end{equation*}
We also have
\begin{equation*}
    ep_{\a}x  = \sum_{\g\in \G\setminus\{\a\}} \sum_{j\in F'_\g} e p_{\a} v_{\g,j}p_{\g}n_{\g,j} =0.
\end{equation*}
Hence $x=0$.

For $r\in A_{\g}$ and $m\in M'_{\a}$, define $rm := rp_{\a}m$. This determines a left
$A$-action on $M_\alpha'$. For $u\in E$ satisfying $ur=r=ru$, we have
\begin{equation*}
    up_{\g\a}rm=(up_{\g\a}rp_{\a})m=urp_{\a}m=rm.
\end{equation*}
Hence $rm\in M'_{\g\a}$. One can easily check the associativity of the $A$-action. Using
Lemma~\ref{lemma-unital} we see that $M=M'$ as sets. We claim that $M'$ is a unital
$A$-module. For $m\in M_{\g}'$, we write $m=\sum_{u\in E'}up_{\g}m_{u}$, where
$E'\subseteq E$ is a finite set and $m_{u}\in M$. Since $u$ is a homogeneous idempotent,
\begin{equation*}
u(up_{\g}m_{u})
    =up_{\g}(up_{\g}m_{u})
    =up_{\g}m_{u}.
\end{equation*}
Thus $u(up_{\g}m_{u})=up_{\g}m_{u}\in AM'$ implies that $m\in AM'$ showing that $M'=AM'$.
We can therefore define $\phi : \operatorname{Obj}(A\#\G\-\Mod) \to\operatorname{Obj}(A\-\Gr) 
$ by $\phi(M)=M'$.

To define $\phi$ on morphisms, fix a morphism $f$ in $A\#\G\-\Mod$. For $m=\sum_{\g\in\G}m_{\g}\in M'$ such that $m_{\g}=\sum_{u\in F_{\g}}up_{\g}m_{u}$ with
$F_{\g}$ a finite subset of $E$, we define $f' : M' \to N'$ by
\begin{equation}\label{eq:graded}
    f'(m_{\g}) = f\Big(\sum_{u\in F_{\g}}up_{\g}m_{u}\Big) = \sum_{u\in F_{\g}}up_{\g}f(m_{u})=f(m)_{\g}.
\end{equation} To see that $f'$ is an
$A$-module homomorphism, fix $m\in M_{\g}'$ and $r\in A$. Since $f(m)\in M'_\g$, we have
\[
    f'(rm)=f(rp_\g m)=rp_\g f(m)=rf'(m). 
\] The definition~\eqref{eq:graded} shows that it
preserves the gradings. That is,  $f'$ is a $\G$-graded $A$-module homomorphism. So we can define $\phi$ on morphisms by $\phi(f) = f'$. It is routine to check
that $\phi$ is a functor.

Next we define a functor $\psi: A\-\Gr\xra A\#\G\-\Mod$ as follows. Let $N=\oplus_{\g\in
\G}N_\g$ be a $\G$-graded unital left $A$-module. Let $N''$ be a copy of $N$ as a group.
Fix $n \in N$, and write $n = \sum_{\g\in \G} n_\g$. Fix $r\in A$ and $\a\in\G$, and
define
\begin{equation*}
\label{doubleprimem}
(rp_{\a})n=rn_{\a}.
\end{equation*}
It is straightforward to check that this determines an associative left $A\#\G$-action on
$N''$. We claim that $N''$ is a unital $A\#\G$-module. To see this, fix $n\in N''$. Since
$AN=N$, we can express $n=\sum_{i=1}^{l}r_{i}n_{i}$, with the $n_i$ homogeneous in $N$
and the $r_i \in A$, and we can then write each $r_i$ as $r_{i}=\sum_{\b\in\G}r_{i, \b}$
as a sum of homogeneous elements $r_{i,\b} \in A_\b$. For any $\g\in\G$,
\[
n_{\g}=\sum_{i=1}^{l}r_{i, \b}{(n_{i})}_{\b^{-1}\g}=\sum_{i=1}^{l}(r_{i,
\b}p_{\b^{-1}\g})n_{i}\in (A\#\G)N''.
\]
So we can define $\psi :\operatorname{Obj}(A\-\Gr)\to \operatorname{Obj}(A\#\G\-\Mod)$
by $\psi(N) = N''$. Since $\psi(N) = N''$ is just a copy of $N$ as a module, we can
define $\psi$ on morphisms simply as the identity map; that is, if $f : M \to N$ is a homomorphism of graded $A$-modules, then for $m \in M$ we write $m''$ for the same element
regarded as an element of $M''$, and we have $\psi(f)(m'') = f(m)''$. Again, it is
straightforward to check that $\psi$ is a functor.

To prove that $\psi\circ \phi={\rm Id}_{A\#\G\-\Mod}$ and $\phi\circ \psi={\rm
Id}_{A\-\Gr}$, it suffices to show that $(M')''=M$ for $M\in A\#\G$-$\mathrm{Mod}$ and
$(N'')'=N$ for $N\in A$-$\mathrm{Gr}$; but this is straightforward from the definitions.

To prove the commutativity of the diagram in \eqref{inducedshift}, it suffices to show
that the $A\#\G$-actions on $(\psi\c \TT_{\a})(N) = N(\a)''$ and
$(\widetilde{\mathcal{S}}^\a \c \psi)(N) = N''(\a)$ coincide for any $N\in A\-\Gr$.
Take any $n\in N$ and $sp_{\b}\in A\#\G$ with $s\in A$ and $\b\in\G$. For $n\in N''(\a)$
and a typical spanning element $sp_\b$ of $A\#\G$, we have $(sp_{\b})n = (sp_{\b\a})n =
sn_{\b\a}$. On the other hand, for the same $n$ regarded as an element of $N''$, and the
same $sp_\b \in A\#\G$, we have $(sp_{\b})n = sn'_{\b} = sn_{\b\a}$. Since $N(\a)_{\b} =
N_{\b\a}$ by definition, this completes the proof.
\end{proof}

\section{The Steinberg algebra of the skew-product}\label{sectionthree}
In this section, we consider the skew-product of an ample groupoid $\mg$ carrying a
grading by a discrete group $\G$. We prove that the Steinberg algebra of the skew-product
is graded isomorphic to the smash product by $\G$ of the Steinberg algebra associated to
$\mg$. This result will be used in Section~\ref{sectionfive} to study the category of
graded modules over Leavitt path algebras and give a representation of the graded
finitely generated projective modules.

\subsection{Graded groupoids}

A groupoid is a small category in which every morphism is invertible. It can also be
viewed as a generalization of a group which has partial binary operation.  Let $\mg$ be a
groupoid. If $x\in\mg$, $d(x)=x^{-1}x$ is the \emph{domain} of $x$ and $r(x)=xx^{-1}$ is
its \emph{range}. The pair $(x,y)$ is composable if and only if $r(y)=d(x)$. The set
$\mg^{(0)}:=d(\mg)=r(\mg)$ is called the \emph{unit space} of $\mg$. Elements of
$\mg^{(0)}$ are units in the sense that $xd(x)=x$ and $r(x)x=x$ for all $x \in \mg$. For
$U,V\in\mg$, we define
\[
    UV=\big \{\a\b \mid \a\in U,\b\in V \text{ and } r(\b)=d(\a)\big\}.
\]

A topological groupoid is a groupoid endowed with a topology under which the inverse map
is continuous, and such that composition is continuous with respect to the relative
topology on $\mg^{(2)} := \{(\b,\g) \in \mg \times \mg : d(\b) = r(\g)\}$ inherited from
$\mg\times \mg$. An \emph{\'etale} groupoid is a topological groupoid $\mg$ such that the
domain map $d$ is a local homeomorphism. In this case, the range map $r$ is also a local
homeomorphism. An \emph{open bisection} of $\mg$ is an open subset $U\subseteq \mg$ such
that $d|_{U}$ and $r|_{U}$ are homeomorphisms onto an open subset of $\mg^{(0)}$. We say
that an \'etale groupoid $\mg$ is \emph{ample} if there is a basis consisting of compact
open bisections for its topology.

Let $\G$ be a discrete group and $\mg$ a topological groupoid. A $\G$-grading of $\mg$ is
a continuous function $c : \mg \to \G$ such that $c(\a)c(\b) = c(\a\b)$ for all $(\a,\b)
\in \mg^{(2)}$; such a function $c$ is called a \emph{cocycle} on $\mg$. In this paper,
we shall also refer to $c$ as the \emph{degree map} on $\mg$. Observe that $\mg$
decomposes as a topological disjoint union $\bigsqcup_{\g \in \mg} c^{-1}(\g)$ of subsets
satisfying $c^{-1}(\b)c^{-1}(\g) \subseteq c^{-1}(\b\g)$. We say that $\mg$ is
\emph{strongly graded} if $c^{-1}(\b)c^{-1}(\g) = c^{-1}(\b\g)$ for all $\b,\g$. For $\g
\in \G$, we say that $X\subseteq \mg$ is $\g$-graded if $X\subseteq c^{-1}(\g)$. We
always have $\mg^{(0)} \subseteq c^{-1}(\varepsilon)$, so $\mg^{(0)}$ is
$\varepsilon$-graded. We write $B^{\rm co}_{\g}(\mg)$ for the collection of all
$\g$-graded compact open bisections of $\mg$ and
\[
B_{*}^{\rm co}(\mg)=\bigcup_{\g\in\G} B^{\rm co}_{\g}(\mg).
\]

Throughout this note we only consider $\Gamma$-graded ample Hausdorff groupoids.

\subsection{Steinberg algebras}\label{subsetion32}

Steinberg algebras were introduced in~\cite{st} in the context of discrete inverse
semigroup algebras and independently in \cite{cfst} as a model for Leavitt path algebras.
We recall the notion of the Steinberg algebra as a universal algebra generated by certain
compact open subsets of an ample Hausdorff groupoid.

\begin{defi}
\label{defsteinberg} Let $\mg$ be a $\Gamma$-graded ample Hausdorff groupoid and
$B_{*}^{\rm co}(\mg)=\bigcup_{\g\in\G} B^{\rm co}_{\g}(\mg)$ the collection of all graded
compact open bisections. Given a commutative ring $R$ with identity, the \emph{Steinberg $R$-algebra} associated to
$\mg$, denoted $A_R(\mg)$, is the algebra generated by the set $\{t_B \mid B \in B^{\rm
co}_{*}(\mg)\}$ with coefficients in $R$, subject to
\begin{enumerate}
\item[(R1)] $t_{\emptyset}=0$;

\smallskip

\item[(R2)] $t_{B}t_{D}=t_{BD}$ for all $B, D\in B^{\rm co}_{*}(\mg)$; and

\smallskip

\item[(R3)] $t_{B}+t_{D}=t_{B\cup D}$ whenever $B$ and $D$ are disjoint elements of
    $B^{\rm co}_{\g}(\mg)$ for some $\g\in\G$ such that $B\cup D$ is a bisection.
\end{enumerate}
\end{defi}
Every element $f\in A_{R}(\mg)$ can be expressed as $f=\sum_{U\in F}a_{U}t_{U}$, where
$F$ is a finite subset of elements of $B_{*}^{\rm co}(\mg)$. It was proved
in~\cite[Proposition 2.3]{cm} (see also~\cite[Theorem 3.10]{cfst}) that the Steinberg
algebra defined above is isomorphic to the following construction:
\[
A_{R}(\mg) = \text{span}\{1_{U} : U \text{ is a compact open bisection of } \mg\},
\]
where $1_{U}:\mg \rightarrow R$ denotes the characteristic function on $U$. Equivalently,
if we give $R$ the discrete topology, then $A_R(\mg) = C_c(\mg, R)$, the space of
compactly supported continuous functions from $\mg$ to $R$. Addition is point-wise and
multiplication is given by convolution
$$(f*g)(\g) = \sum_{\{\a\b=\g\}}f(\a)g(\b).$$
It is useful to note that $$1_{U}*1_{V}=1_{UV}$$ for compact open bisections $U$ and $V$
(see \cite[Proposition 4.5(3)]{st}) and the isomorphism between the two constructions is
given by $t_U \mapsto 1_U$ on the generators. By \cite[Lemma 2.2]{cm} and \cite[Lemma
3.5]{cfst}, every element $f\in A_{R}(\mg)$ can be expressed as
\begin{equation}
\label{express}
f=\sum_{U\in F}a_{U}1_{U},
\end{equation}
where $F$ is a finite subset of mutually disjoint elements of $B_{*}^{\rm co}(\mg)$.

Recall from \cite[Lemma 3.1]{cs} that if $c : \mg \to \G$ is a continuous $1$-cocycle
into a discrete group $\G$, then the Steinberg algebra $A_R(\mg)$ is a $\Gamma$-graded
algebra with homogeneous components
\[
    A_{R}(\mg)_{\g} = \{f\in A_{R}(\mg)\mid \supp(f)\subseteq c^{-1}(\g)\}.
\]
The family of all idempotent elements of $A_{R}(\mg^{(0)})$ is a set of local units for
$A_{R}(\mg)$ (\cite[Lemma 2.6]{cep}). Here, $A_{R}(\mg^{(0)})\subseteq A_{R}(\mg)$ is a
subalgebra. Since $\mg^{(0)}\subseteq c^{-1}(\varepsilon)$ is trivially graded,
$A_{R}(\mg)$ is a graded algebra with graded local units. Note that any ample Hausdorff
groupoid admits the trivial cocycle from $\mg$ to the trivial group $\{\varepsilon\}$,
which gives rise to a trivial grading on $A_{R}(\mg)$.

\subsection{Skew-products} \label{subsection22}
Let $\mg$ be an ample Hausdorff groupoid, $\G$ a discrete group, and $c : \mg \to \G$ a
continuous cocycle. Then $\mg$ admits a basis $\mathcal{B}$ of compact open bisections.
Replacing $\mathcal{B}$ with $\mathcal{B}' = \{U \cap c^{-1}(\g) \mid U \in \mathcal{B},
\g \in \G\}$, we obtain a basis of compact open homogeneous bisections.

To a $\G$-graded groupoid $\mg$ one can associate a groupoid called the skew-product of
$\mg$ by $\G$. The aim of this section is to relate the Steinberg algebra of the
skew-product groupoid to the Steinberg algebra of $\mg$. We recall the notion of
skew-product of a groupoid (see~\cite[Definition 1.6]{re}).

\begin{defi} \label{defsp}
Let $\mg$ be an ample Hausdorff groupoid, $\G$ a discrete group and $c : \mg \to \G$ a
continuous cocycle. The \emph{skew-product} of $\mg$ by $\G$ is the groupoid $\mg
\times_c \G$ such that $(x,\a)$ and $(y, \b)$ are composable if $x$ and $y$ are
composable and $\a=c(y)\b$. The composition is then given by $\big (x,
c(y)\b\big)\big(y,\b\big)=(xy, \b)$ with the inverse $(x, \a)^{-1}=(x^{-1},c(x)\a)$.
\end{defi}

Note that our convention for the composition of the skew-product here is slightly
different from that in \cite[Definition 1.6]{re}. The two determine isomorphic groupoids,
but when we establish the isomorphism of Theorem~\ref{isothm}, the composition formula
given here will be more obviously compatible with the multiplication in the smash
product.

\begin{lem}\label{ample}
Let $\mg$ be a $\Gamma$-graded ample groupoid. Then  the skew-product $\mg\times_c \G$ is
a $\Gamma$-graded ample groupoid under the product topology on $\mg \times \G$ and with
degree map $\tilde{c}(x,\gamma) := c(x)$.
\end{lem}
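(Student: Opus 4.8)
The plan is to verify, one property at a time, each clause in the definition of a $\G$-graded ample Hausdorff groupoid for $\mg \times_c \G$: that it is a topological groupoid under the product topology, that it is Hausdorff, that its domain map is a local homeomorphism (so that it is \'etale), that it admits a basis of compact open bisections (so that it is ample), and finally that $\tilde c$ is a continuous cocycle. Throughout I would exploit that $\G$ is discrete, so the second-factor topology contributes nothing beyond bookkeeping and every map into $\G$ assembled from $c$ and the coordinate projections is automatically continuous (being locally constant).

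First I would record the structure maps explicitly. A direct computation from Definition~\ref{defsp} gives $(x,\a)^{-1} = (x^{-1}, c(x)\a)$, $d(x,\a) = (d(x),\a)$ and $r(x,\a) = (r(x), c(x)\a)$, so that the unit space is $\mg^{(0)} \times \G$. Hausdorffness is immediate, since $\mg \times \G$ is a product of a Hausdorff space with a discrete (hence Hausdorff) one. Continuity of inversion and of composition both follow by combining continuity of the corresponding maps on $\mg$ (inversion, and multiplication on $\mg^{(2)}$) with continuity of $c$ and of the projections into the discrete group $\G$. For the \'etale property, I would observe that the domain map is exactly $d_{\mg} \times \mathrm{id}_{\G}$; as $d_\mg$ is a local homeomorphism and $\mathrm{id}_\G$ is a homeomorphism, the product is a local homeomorphism.

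The crux is the ample property, and here the key point is to work with a basis of \emph{homogeneous} compact open bisections of $\mg$, which exists by the construction recalled at the start of Section~\ref{subsection22}. Given such a $U$, say $U \subseteq c^{-1}(\delta)$, together with $\g \in \G$, I would show that $U \times \{\g\}$ is a compact open bisection of $\mg \times_c \G$; ranging over all such $U$ and all $\g$ these sets form a basis of $\mg \times \G$, because singletons form a basis of the discrete group $\G$. Compactness and openness of $U \times \{\g\}$ are clear, and $d$ restricts to a homeomorphism of $U \times \{\g\}$ onto the open set $d(U) \times \{\g\}$ since $d|_U$ is a homeomorphism onto the open set $d(U)$. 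The map $r$ is where homogeneity is essential: on $U \times \{\g\}$ it is $(x,\g) \mapsto (r(x), c(x)\g)$, and because $c \equiv \delta$ on $U$ its image is exactly $r(U) \times \{\delta\g\}$, which is open and onto which $r$ restricts homeomorphically. For a non-homogeneous $U$ the set $\{(r(x), c(x)\g) : x \in U\}$ need not be of product form, so this is precisely the step that forces the passage to homogeneous bisections; I expect it to be the main obstacle to a naive argument.

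Finally I would check that $\tilde c$ grades the skew-product. Continuity is clear, since $\tilde c = c \circ \pi_1$ with $\pi_1 : \mg \times \G \to \mg$ the projection. For the cocycle identity, given a composable pair $((x,\a),(y,\b))$ (so that $r(y) = d(x)$ and $\a = c(y)\b$), one has $\tilde c(x,\a)\,\tilde c(y,\b) = c(x)c(y) = c(xy) = \tilde c\big((x,\a)(y,\b)\big)$, using that $c$ is a cocycle on $\mg$ and $(x,y) \in \mg^{(2)}$. This shows $\tilde c$ is a continuous cocycle, completing the verification that $\mg \times_c \G$ is a $\G$-graded ample Hausdorff groupoid.
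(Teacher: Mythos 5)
Your proof is correct and follows essentially the same route as the paper: check continuity of the structure maps directly, observe that the domain map of the skew product is $d_\mg \times \operatorname{id}_\G$ to get the \'etale property, take products $B \times \{\g\}$ as a basis of compact open bisections, and verify the cocycle identity for $\tilde c$ coordinatewise. The one place you diverge is in insisting on \emph{homogeneous} bisections for the ampleness step, and your justification for this is not quite right. For an arbitrary compact open bisection $B$ of $\mg$, the set $B \times \{\g\}$ is still a compact open bisection of $\mg \times_c \G$: writing $B = \bigsqcup_\delta \big(B \cap c^{-1}(\delta)\big)$ as a (finite, by compactness) disjoint union of open homogeneous pieces, the $r$-image of $B \times \{\g\}$ is $\bigcup_\delta r\big(B \cap c^{-1}(\delta)\big) \times \{\delta\g\}$, which is open even though it is not of product form, and $r$ restricted to the compact set $B\times\{\g\}$ is a continuous injection into a Hausdorff space, hence a homeomorphism onto its image. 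So the ``naive'' argument does not break down, and indeed the paper simply takes $\{B \times \{\g\} \mid B \in \mathcal{B},\ \g \in \G\}$ for an arbitrary basis $\mathcal{B}$ of compact open bisections. Your restriction to homogeneous $B$ is harmless --- such bisections do form a basis, as recalled at the start of Section~\ref{subsection22} --- and it makes the openness of the range image immediate, but the claim that homogeneity is ``essential'' should be dropped.
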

\begin{proof}
We can directly check  that under the product topology on  $\mg \times \G$, the inverse
and composition of the skew-product  $\mg\times_c \G$ are continuous making it a
topological groupoid. Since the domain map $d: \mg\xra \mg^{(0)}$ is a local
homeomorphism, the domain map (also denoted $d$) from $\mg\times_c \G$ to
$\mg^{(0)}\times \G$ is $d \times \operatorname{id}_{\G}$ so restricts to a homeomorphism
on $U \times \G$ for any set $U$ on which $d$ is a homeomorphism. So $d : \mg\times_c \G
\to (\mg\times_c \G)^{(0)}$ is a local homeomorphism. Since the inverse map is clearly a
homeomorphism, it follows that the range map is also a local homeomorphism.

If $\mathcal{B}$ is a basis of compact open bisections for $\mg$, then $\{B \times \{\g\}
\mid B \in \mathcal{B}\text{ and }\g \in \G\}$ is a basis of compact open bisections for
the topology on $\mg\times_c \G$. Since composition on $\mg \times_c \G$ agrees with
composition in $\mg$ in the first coordinate, it is clear that $\tilde{c}$ is a
continuous cocycle.
\end{proof}

The Steinberg algebra $A_{R}(\mg\times_c \G)$ associated to $\mg\times_c \G$ is a
$\G$-graded algebra, with homogeneous components
\[
    A_{R}(\mg\times_c \G)_{\g} = \big\{f\in A_{R}(\mg\times_c \G) \mid \supp(f)\subseteq c^{-1}(\gamma)\times \Gamma\big\}
\]
for $\g\in\G$.

We are in a position to state the main result of this section.

\begin{thm} \label{isothm}
Let $\mg$ be a $\Gamma$-graded ample, Hausdorff groupoid and $R$ a unital commutative
ring.  Then there is an isomorphism of $\G$-graded algebras $A_{R}(\mg\times_c\G)\cong
A_{R}(\mg)\#\G,$ assigning $1_{U\times \{\alpha\}}$ to $1_U p_\a$ for each compact open
bisection $U$ of $\mg$ and $\a\in \G$.
\end{thm}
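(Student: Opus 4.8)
The plan is to construct the isomorphism $\Phi : A_R(\mg)\#\G \to A_R(\mg\times_c\G)$ explicitly on generators and verify it is a well-defined graded algebra isomorphism. First I would observe that, by Lemma~\ref{ample}, the sets $U\times\{\a\}$ with $U$ a compact open bisection of $\mg$ and $\a\in\G$ are compact open bisections of $\mg\times_c\G$, and that these generate $A_R(\mg\times_c\G)$. On the smash-product side, $A_R(\mg)\#\G$ is spanned by elements $1_U p_\a$ with $U$ a compact open bisection of $\mg$. So I would define $\Phi$ on the spanning set by $\Phi(1_U p_\a) = 1_{U\times\{\a\}}$ and extend $R$-linearly, and separately define a candidate inverse $\Psi(1_{U\times\{\a\}}) = 1_U p_\a$; establishing that both are well defined and mutually inverse is the core of the argument.

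The main obstacle is well-definedness: the generators $1_U$ of $A_R(\mg)$ satisfy the relations (R1)--(R3), and the symbols $p_\a$ interact with them through the smash multiplication rule $(r p_\a)(s p_\b) = r s_{\a\b^{-1}} p_\b$, so I must check that $\Phi$ respects all of these. The cleanest route is to work through the convolution-algebra picture of both sides rather than the presentation. For any $\g\in\G$, decompose a general homogeneous bisection $U$ as a disjoint union $U = \bigsqcup_\delta (U\cap c^{-1}(\delta))$; since $U\times\{\a\} = \bigsqcup_\delta (U\cap c^{-1}(\delta))\times\{\a\}$ is a disjoint union of compact open bisections of the skew product, relation (R3) is respected provided I also track the grading, and (R1) is immediate. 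The key computation is multiplicativity: I must verify
\[
\Phi\bigl((1_U p_\a)(1_V p_\b)\bigr) = \Phi(1_U p_\a)\,\Phi(1_V p_\b)
\]
for homogeneous $U,V$. Writing $\deg(1_V) = \delta$, the smash rule gives the left side as $\Phi\bigl(1_U (1_V)_{\a\b^{-1}} p_\b\bigr)$, which is nonzero exactly when $\delta = \a\b^{-1}$, i.e. $\a = \delta\b$; in that case it equals $1_{UV\times\{\b\}}$. On the right side I would compute $1_{U\times\{\a\}} * 1_{V\times\{\b\}} = 1_{(U\times\{\a\})(V\times\{\b\})}$ in $A_R(\mg\times_c\G)$ and unwind the skew-product composition law from Definition~\ref{defsp}: the pair $(x,\a)$, $(y,\b)$ is composable precisely when $x,y$ are composable in $\mg$ and $\a = c(y)\b$. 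Since $y\in V\subseteq c^{-1}(\delta)$, this forces $\a = \delta\b$, matching the nonvanishing condition on the smash side exactly, and the product bisection is then $UV\times\{\b\}$. This is precisely the point flagged in the remark after Definition~\ref{defsp}: the chosen composition convention is what makes these two conditions coincide.

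Having established multiplicativity, additivity and $R$-linearity are built into the definition, so $\Phi$ is an algebra homomorphism; the symmetric computation shows $\Psi$ is too, and $\Phi\Psi$ and $\Psi\Phi$ fix the respective spanning sets, giving mutually inverse maps. Finally I would check that $\Phi$ is graded: by \eqref{sydhyhy}, $(A_R(\mg)\#\G)_\g = \sum_\a A_R(\mg)_\g p_\a$, so a homogeneous generator $1_U p_\a$ of degree $\g$ has $U\subseteq c^{-1}(\g)$, whence $\supp(1_{U\times\{\a\}}) \subseteq c^{-1}(\g)\times\G$, which is exactly the $\g$-homogeneous component of $A_R(\mg\times_c\G)$ described before the theorem. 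Thus $\Phi$ carries homogeneous components to homogeneous components and is a graded isomorphism. I expect the only genuinely delicate bookkeeping to be confirming that the disjoint-union decompositions used to reduce to homogeneous bisections are compatible with the grading shift implicit in the smash multiplication, but once the composability condition $\a = c(y)\b$ is matched against the degree-shift $\a\b^{-1}$ the rest is routine verification.
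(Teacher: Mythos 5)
Your proposal is correct, but it runs in the opposite direction to the paper's proof and organises the work differently. The paper defines, for each graded compact open bisection $U=\bigsqcup_i U_i\times\{\g_i\}$ of $\mg\times_c\G$, an element $t_U=\sum_i 1_{U_i}p_{\g_i}$ of $A_R(\mg)\#\G$, verifies (R1)--(R3) for the family $\{t_U\}$, and invokes the universal property of $A_R(\mg\times_c\G)$ to obtain the homomorphism $\phi:A_R(\mg\times_c\G)\to A_R(\mg)\#\G$ with $\phi(1_{U\times\{\a\}})=1_Up_\a$; surjectivity is then immediate, and injectivity is proved by writing a homogeneous element of the kernel as $\sum_i r_i 1_{B_i}$ with the $B_i$ mutually disjoint and evaluating at points to force $r_i=0$. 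You instead build the map $A_R(\mg)\#\G\to A_R(\mg\times_c\G)$ directly in the convolution picture and check multiplicativity on homogeneous spanning elements; your matching of the smash rule $(1_Up_\a)(1_Vp_\b)=1_U(1_V)_{\a\b^{-1}}p_\b$ against the skew-product composability condition $\a=c(y)\b$ is exactly the content of the paper's verification of (R2), so the central computation is the same in both arguments. What your route buys is that bijectivity comes essentially for free: since $\G$ is discrete and supports are compact, $A_R(\mg\times_c\G)=C_c(\mg\times\G,R)$ decomposes as $\bigoplus_{\g\in\G}C_c(\mg\times\{\g\},R)$, while $A_R(\mg)\#\G=\bigoplus_{\g\in\G}A_R(\mg)p_\g$ by definition, and your map is slice-by-slice the identity on $C_c(\mg,R)$ --- so it is a manifest $R$-module isomorphism and no separate injectivity argument (and no separate inverse $\Psi$) is needed. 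I would urge you to state this slicewise identification explicitly rather than saying ``extend $R$-linearly'' from the spanning set $\{1_Up_\a\}$ and that linearity is ``built into the definition'': the functions $1_U$ satisfy many linear relations, so a map prescribed only on that spanning set is not a priori well defined, and the convolution-picture decomposition is precisely what discharges this. Your discussion of (R1) and (R3) is then redundant --- once you work with actual functions rather than the presentation, those relations hold automatically --- and the grading check at the end is correct as written.
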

\begin{proof}
We first define a representation $\{t_{U} \mid  U\in B_{*}^{\rm co}(\mg\times_c \G)\}$ in
the algebra $A_{R}(\mg)\#\G$ (see Definition~\ref{defsteinberg}). If $U$ is a graded
compact open bisection of $\mg\times_c \G$, say $U \subseteq \tilde{c}^{-1}(\alpha)$, then for
each $\g\in\G$, the set $U \cap \mg \times \{\g\}$ is a compact open bisection. Since
these are mutually disjoint and $U$ is compact, there are finitely many (distinct) $\g_1,
\dots, \g_l \in \G$ such that $U = \bigsqcup^l_{i=1} U \cap \mg \times \{g_i\}$. Each $U
\cap \mg \times \{g_i\}$ has the form $U_i \times \{\g_i\}$ where $U_i \subseteq U$ is
compact open. The $U_i$ have mutually disjoint sources because the domain map on $\mg
\times_c \G$ is just $d \times \operatorname{id}$, and $U$ is a bisection. So each $U_i
\in B^{\rm co}_\alpha(\mg)$, and $U = \bigsqcup^l_{i=1} U_i \times \{\g_i\}$. Using this
decomposition, we define
\[
    t_{U}=\sum^l_{i=1} 1_{U_{i}}p_{\g_i}.
\]
We show that these elements $t_U$ satisfy (R1)--(R3). Certainly if $U=\emptyset$, then
$t_{U}=0$, giving~(R1). For~(R2), take $V \in B^{\rm co}_\b(\mg \times_c \G)$, and
decompose $V = \bigcup^m_{j=1} V_j \times \{\g'_j\}$ as above. Then
\begin{equation} \label{eqone}
\begin{split}
t_{U}t_{V}
    &=\sum_{i=1}^{l}1_{U_{i}}p_{\g_{i}}\sum_{j=1}^{m}1_{V_{j}}p_{\g'_{j}}
    =\sum_{i=1}^{l}\sum_{j=1}^{m}1_{U_{i}}p_{\g_{i}}1_{V_{j}}p_{\g'_{j}}\\
    &=\sum_{j=1}^{m}\sum_{i=1}^{l}1_{U_{i}}p_{\g_{i}}1_{V_{j}}p_{\g'_{j}}
    =\sum_{j=1}^{m}\sum_{\{1\leq i\leq l \mid \g_{i}(\g'_{j})^{-1}=\b\}}1_{U_{i}V_{j}}p_{\g'_{j}}.
\end{split}
\end{equation}

On the other hand, by the composition of the skew-product $\mg\times_c \Gamma$, we have
\begin{equation*}
\begin{split}
UV  &=\bigcup_{i=1}^{l}\bigcup_{j=1}^{m}U_{i}\times\{\g_{i}\}\cdot V_{j}\times\{\g'_{j}\}\\
    &=\bigcup_{j=1}^{m}\bigcup_{i=1}^{l}U_{i}\times\{\g_{i}\}\cdot V_{j}\times\{\g'_{j}\}
    =\bigcup_{j=1}^{m}\bigcup_{\{1\leq i\leq l \mid \g_{i} = \b\g'_{j}\}}U_{i}V_{j}\times \{\g'_{j}\}.
\end{split}
\end{equation*}
For each $1\leq j\leq m$, there exists at most one $1\leq i\leq l$ such that
$\g_{i}=\b\g'_{j}$ and $U_{i}V_{j}\in B^{\rm co}_{\a\b}(\mg)$. It follows that
$t_{UV}=\sum_{j=1}^{s}\sum_{\{1\leq i\leq l \mid
\g_{i}(\g'_{j})^{-1}=\b\}}1_{U_{i}V_{j}}p_{\g'_{j}}$. Comparing this with~\eqref{eqone},
we obtain $t_{U}t_{V}=t_{UV}$.

To check~(R3), suppose that $U$ and $V$ are disjoint elements of $B^{\rm
co}_{\omega}(\mg\times_c \G)$ for some $\omega \in \G$ such that $U\cup V$ is a bisection
of $\mg\times_c \Gamma$. Write them as $U=\bigcup_{i=1}^{l} U_{i}\times \{\g_{i}\}$ and
$V=\bigcup_{j=1}^{m} V_{i}\times \{\g'_{j}\}$ as above. We have
$t_U+t_V=\sum_{i=1}^{l}1_{U_{i}}p_{\g_{i}}+\sum_{j=1}^{m}1_{V_{j}}p_{\g'_{j}}$. On the
other hand $U\cup V = (\bigcup_{i=1}^{l} U_{i}\times \{\g_{i}\})\bigcup
(\bigcup_{j=1}^{m} V_{i}\times \{\g'_{j}\})$. If $\g_i=\g'_j$, then $U_i\times \{\g_{i}\}
\cup V_j\times \{\g'_{j}\} = (U_i\cup V_j) \times \{\g_{i}\}$. Since $U$ and $V$ are
disjoint and $U\cup V$ is a bisection, we deduce that $r(U_i) \cap r(V_j) = \emptyset =
d(U_i) \cap d(V_j)$ so that $U_i\cup V_j$ is a bisection.  So
\[
t_{U_i\times \{\g_{i}\} \cup V_j\times \{\g'_{j}\}}
    = t_{U_i\cup V_j \times \{\g_{i}\}}
    = 1_{U_i\cup V_j} p_{\g_i}
    = 1_{U_i} p_{\g_i} + 1_{V_j}p_{\g_i}
    = 1_{U_i}p_{\g_i} + 1_{V_j}p_{\g'_j}.
\]
This shows that after combining pairs where $\g_i = \g'_j$ as above, we obtain
$t_{U}+t_{V}=t_{U\cup V}$.

By the universality of Steinberg algebras, we have an $R$-homomorphism,
\begin{equation*}
\phi:A_{R}(\mg\times_c\G) \longrightarrow A_{R}(\mg)\#\G
\end{equation*}
such that $\phi(1_{U\times \{\a\}})=1_{U}p_{\a}$ for each compact open bisection $U$ of $\mg$ and $\a\in\G$.  From the
definition of $\phi$, it is evident that $\phi$ preserves the grading. Hence, $\phi$ is a
homomorphism of $\G$-graded algebras.

Next we prove that $\phi$ is an isomorphism. For any element $ap_{\g}\in A_{R}(\mg)\#\G$
with $a\in A_{R}(\mg)$ and $\g\in\G$, there is a finite index set $T$, elements $\{r_i
\mid i \in T\}$ of $R$, and compact open bisections $K_{i}\in B^{\rm co}_{*}(\mg)$ such
that
\begin{equation*}
ap_{\g}
    = \sum_{i\in T}r_{i}1_{K_{i}}p_{\g}
    = \sum_{i\in T}r_{i}\phi(1_{K_{i}\times\{\g\}})\in \mathrm{Im} \phi.
\end{equation*}
So $\phi$ is surjective. It remains to prove that $\phi$ is injective. Take an element
$x\in A_{R}(\mg\times_c \G)$ such that $\phi(x)=0$. Since $\phi$ is graded, we can assume
that $x$ is homogeneous, say $x \in A_R(\mg \times_c \G)_\gamma$. By \eqref{express},
there is a finite set $F$, mutually disjoint $B_{i}\in B^{\rm co}_{\gamma}(\mg\times_c
\G)$ indexed by $i \in F$ and coefficients $r_{i}\in R$ indexed by $i \in F$ such that
\[
x=\sum_{i\in F}r_{i}1_{B_{i}}.
\]
For each $B_{i}$, we write $B_{i}=\bigcup_{k\in F_{i}} B_{ik}\times \{\delta_{ik}\}$ such
that $F_{i}$ is a finite set and the $\delta_{ik}$ indexed by $k \in F_i$ are distinct.
Set
\[
\Delta=\{\delta_{ik} \mid i\in F, k\in F_{i}\}.
\]
For each $\delta \in \Delta$, let $F_\delta \subseteq F$ be the collection $F_\delta
= \big\{i \in F : \delta \in \{\delta_{ik} : k \in F_i\}\big\}$. Then
\[
\phi(x)
    = \sum_{i\in F}r_{i}\phi(1_{B_{i}})
    = \sum_{i\in F}\sum_{k\in F_{i}}r_{i}1_{B_{ik}}p_{\delta_{ik}}
    = \sum_{\delta\in \Delta}\sum_{i\in F_{\delta}} r_{i}1_{B_{i, k(\delta)}}p_{\delta}
    = 0.
\]
For any $\delta\in\Delta$, we obtain $\sum_{i\in F_{\delta}}r_{i}1_{B_{i, k(\delta)}}=0$.
Since the $B_{i}$ are mutually disjoint, for any element $g\in\mg$, we have
\begin{equation*}
\Big(\sum_{i\in F_{\delta}}r_{i}1_{B_{i, k(\delta)}}\Big)(g)
    =\begin{cases}
        r_{i}, & \text{if~} g\in B_{i, k(\delta)} \text{~for some~} i\in F_{\delta};\\
        0, & \text{otherwise}.
    \end{cases}
\end{equation*}
Then $r_{i}=0$ for any $i\in F_{\delta}$, giving $x=0$.
\end{proof}

\subsection{\texorpdfstring{$C^*$}{C*}-algebras and crossed-products}\label{aidancalgebra}

In the groupoid-$C^*$-algebra literature, it is well-known that if $\mg$ is a
$\Gamma$-graded groupoid, and $\Gamma$ is abelian, then the $C^*$-algebra $C^*(\mg \times
\Gamma)$ of the skew-product groupoid is isomorphic to the crossed  product $C^*$-algebra
$C^*(\mg) \times_{\alpha^c} \widehat{\Gamma}$, where $\alpha^c$ is the action of the
Pontryagin dual $\widehat{\Gamma}$ such that $\alpha^c_\chi(f)(g) = \chi(c(g))f(g)$ for
$f \in C_c(\mg)$, $\chi \in \widehat{\Gamma}$, and $g \in \mg$. This extends to
nonabelian $\Gamma$ via the theory of $C^*$-algebraic coactions.

In this subsection, we reconcile this result with Theorem~\ref{isothm} by showing that
there is a natural embedding of $A_\mathbb{C}(\mg)\#\Gamma$ into $C^*(\mg)
\times_{\alpha^c} \widehat{\Gamma}$ when $\Gamma$ is abelian.

\begin{lem}\label{lem:smashprod embedding}
Suppose that $\Gamma$ is a discrete abelian group and that $\mg$ is a $\Gamma$-graded
groupoid with grading cocycle $c : \mg \to \Gamma$. For $a \in A_{\mathbb{C}}(\mg)$ and
$\g \in \G$, define $a \cdot \hat\g \in C(\widehat{\G},  C^*(\mg)) \subseteq C^*(\mg)
\times_{\alpha^c} \G$ by
\[
\big(a \cdot \hat{\gamma}\big)(\chi) = \chi(\gamma)a.
\]
Then there is a homomorphism $A_\mathbb{C}(\mg)\#\Gamma \hookrightarrow C^*(\mg)
\times_{\alpha^c} \widehat{\Gamma}$ that carries $ap_\gamma$ to $a \cdot \hat{\gamma}$.
\end{lem}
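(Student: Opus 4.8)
The plan is to write down the candidate map $\Phi$ explicitly, verify multiplicativity by a direct computation of the crossed-product convolution, and read off injectivity from the orthogonality relations for the characters of $\widehat{\G}$. First I would record the ambient structure: since $\G$ is discrete, $\widehat{\G}$ is compact, and I normalise its Haar measure to have total mass $1$. Then the crossed product $C^*(\mg)\times_{\alpha^c}\widehat{\G}$ contains the dense $*$-subalgebra $C(\widehat{\G},C^*(\mg))$, with product the twisted convolution
\[
(f*g)(\chi)=\int_{\widehat{\G}}f(\psi)\,\alpha^c_\psi\big(g(\psi^{-1}\chi)\big)\,d\psi.
\]
Each $a\cdot\hat{\g}\colon\chi\mapsto\chi(\g)a$ is continuous and so lies in $C(\widehat{\G},C^*(\mg))$, and $a\mapsto a\cdot\hat{\g}$ is $\mathbb{C}$-linear. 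I would therefore define $\Phi\colon A_{\mathbb{C}}(\mg)\#\G\to C^*(\mg)\times_{\alpha^c}\widehat{\G}$ on a finite formal sum by $\Phi\big(\sum_{\g}r^{(\g)}p_{\g}\big)=\sum_{\g}r^{(\g)}\cdot\hat{\g}$. As the elements of the smash product are exactly such formal sums under componentwise addition, $\Phi$ is well defined and additive; it remains to prove it is multiplicative and injective.

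The substantive step is multiplicativity, and by linearity it is enough to check it on spanning elements $ap_{\a}$ and $bp_{\b}$ (where, since $a,b\in A_{\mathbb{C}}(\mg)\subseteq C^*(\mg)$, all products below remain locally constant and compactly supported, hence stay in $A_{\mathbb{C}}(\mg)$). Substituting $f=a\cdot\hat{\a}$ and $g=b\cdot\hat{\b}$ into the convolution formula and expanding $\alpha^c_\psi(h)(x)=\psi(c(x))h(x)$, the entire dependence on $\psi$ collapses into a single factor $\psi(\eta)$, where $\eta\in\G$ is assembled from $\a$, $\b$ and $c(x)$. The orthogonality relation $\int_{\widehat{\G}}\psi(\eta)\,d\psi=1$ if $\eta=\varepsilon$ and $0$ otherwise then annihilates every contribution except the one on which $c(x)$ takes the single required value, collapsing the integral to $\chi(\b)\,(a* b_{\a\b^{-1}})$, where $b_{\a\b^{-1}}$ is the homogeneous component of $b$ of degree $\a\b^{-1}$. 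This is precisely $\Phi\big(a\,b_{\a\b^{-1}}\,p_{\b}\big)=\Phi\big((ap_{\a})(bp_{\b})\big)$ by the smash-product rule $(ap_{\a})(bp_{\b})=a\,b_{\a\b^{-1}}\,p_{\b}$, giving multiplicativity. I expect this to be the main obstacle: the exact homogeneous degree selected by the orthogonality relation is convention-sensitive, and the conventions for $\alpha^c$ and for the convolution must be precisely those compatible with the skew-product convention fixed after Definition~\ref{defsp}, so that degree $\a\b^{-1}$ (rather than $\b\a^{-1}$) is singled out; this bookkeeping is the delicate part.

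Finally, for injectivity suppose $\Phi\big(\sum_{\g}r^{(\g)}p_{\g}\big)=0$, that is $\sum_{\g}\chi(\g)r^{(\g)}=0$ in $C^*(\mg)$ for every $\chi\in\widehat{\G}$. Fixing $\delta\in\G$, integrating this identity against $\chi(\delta^{-1})$ over $\widehat{\G}$ and applying the same orthogonality relation isolates the single surviving term $r^{(\delta)}=0$; as $\delta$ was arbitrary, all components vanish and $\Phi$ is injective. Conceptually this $\Phi$ is the discrete-coefficient shadow of Renault's isomorphism $C^*(\mg\times_c\G)\cong C^*(\mg)\times_{\alpha^c}\widehat{\G}$ composed with Theorem~\ref{isothm}, and one could alternatively obtain it that way; but checking that the resulting map has the stated form $ap_{\g}\mapsto a\cdot\hat{\g}$ would require essentially the same computation, so the direct route above is preferable.
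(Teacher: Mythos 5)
Your proposal is correct and follows essentially the same route as the paper: both verify multiplicativity on spanning elements $ap_\a$, $bp_\b$ by substituting into the crossed-product convolution and letting orthogonality of characters collapse the integral onto a single homogeneous component of $b$, yielding $\chi(\b)\,a b_{?}$ with $?$ the degree forced by the conventions. You go slightly beyond the paper in two useful ways: you explicitly check injectivity (which the $\hookrightarrow$ in the statement asserts but the paper's proof never addresses), and you correctly single out the degree bookkeeping ($\a\b^{-1}$ versus $\b\a^{-1}$) as the delicate convention-dependent point --- indeed, with the paper's stated formulas for $\alpha^c$ and for homogeneous components its own computation lands on $b_{\g^{-1}\b}$ while the smash-product rule requires $b_{\g\b^{-1}}$, so this is precisely where care is needed.
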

\begin{proof}
The multiplication in the crossed-product $C^*$-algebra is given on elements of
$C(\widehat{\Gamma}, C^*(\mg))$ by $(F*G)(\chi) = \int_{\widehat{\Gamma}} F(\rho)
\alpha^c_{\rho}(G(\rho^{-1}\chi))\,d\mu(\rho)$, where $\mu$ is Haar measure on
$\widehat{\Gamma}$.

The action of $\widehat{\Gamma}$ induces a $\Gamma$-grading of $C^*(\mg)
\times_{\alpha^c} \widehat{\Gamma}$ such that for $a \in C^*(\mg) \times_{\alpha^c}
\widehat{\Gamma}$ and $\gamma \in \Gamma$, the corresponding homogeneous component
$a_\gamma$ of $a$ is given by
\[
a_\gamma = \int_{\widehat{\Gamma}} \overline{\chi(\gamma)} \alpha^c_\chi(a)\,d\mu(\chi).
\]

There is certainly a linear map $i : A_\mathbb{C}(\mg)\#\Gamma \to C^*(\mg)
\times_{\alpha^c} \widehat{\Gamma}$ satisfying $i(ap_\gamma) = a\cdot\hat{\gamma}$; we
just have to check that it is multiplicative. For this, fix $a,b \in A_\mathbb{C}(\mg)$
and $\gamma,\beta \in \Gamma$ and $\chi \in \widehat{\Gamma}$, and calculate
\begin{align*}
\big(i(ap_\gamma)i(bp_\beta)\big)(\chi)
    &= \int_{\widehat{\Gamma}} i(ap_\gamma)(\rho) \alpha^c_{\rho}(i(bp_\beta)(\rho^{-1}\chi))\,d\mu(\rho)
     = \int_{\widehat{\Gamma}} a\cdot \hat{\gamma}(\rho) \alpha^c_{\rho}(b\hat{\beta}(\rho^{-1}\chi))\,d\mu(\rho)\\
    &= \int_{\widehat{\Gamma}} \rho(\gamma) a (\rho^{-1}\chi)(\beta) \alpha^c_{\rho}(b)\,d\mu(\rho)
     = \chi(\beta) a \int_{\widehat{\Gamma}} \overline{\rho(\gamma^{-1}\beta)} \alpha^c_{\rho}(b)\,d\mu(\rho)\\
    &= \chi(\beta) a b_{\gamma^{-1}\beta}
     = (a b_{\gamma^{-1}\beta})\cdot\hat{\beta}
     = i(ab_{\gamma^{-1}\beta}p_\beta)
     = i(ap_\gamma bp_\beta).
\end{align*}
So $i$ is multiplicative as required.
\end{proof}

\section{Non-stable graded \texorpdfstring{$K$}{K}-theory}\label{sectiondrdr}
For a unital ring $A$, we denote by $\VV(A)$ the abelian monoid of isomorphism classes of
finitely generated projective left $A$-modules under direct sum. In general for an
abelian monoid $M$ and elements $x,y \in M$, we write $x\leq y$ if $y=x+z$ for some $z\in
M$. An element $d\in M$ is called \emph{distinguished} (or an \emph{order unit}) if for
any $x\in M$, we have $x\leq nd$ for some $n\in \mathbb N$. A monoid is called
\emph{conical}, if $x+y=0$ implies $x=y=0$. Clearly $\VV(A)$ is conical with a
distinguished element $[A]$. For a finitely generated conical abelian monoid $M$
containing a distinguished element $d$, Bergman constructed a ``universal'' $K$-algebra
$B$ (here $K$ is a field) for which there is an isomorphism $\phi:\VV(B)\rightarrow M$,
such that $\phi([B])\rightarrow d$ (\cite[Theorem~6.2]{bergman74}).

For a (finite) directed graph $E$, one defines an abelian monoid $M_E$ generated by the
vertices, identifying a vertex with the sum of vertices connected to it by edges
(see~\S\ref{stlib}). The Bergman universal algebra associated to this monoid (with the
sum of vertices as a distinguished element) is the Leavitt path algebra $L_K(E)$
associated  to the graph $E$, i.e., $\VV(L_K(E))\cong M_E$.  Leavitt path algebras of
directed graphs have been studied intensively since their introduction~\cite{ap,amp}. The
classification of such algebras is still a major open topic and one would like to find a
complete invariant for such algebras. Due to the success of $K$-theory in the
classification of graph $C^*$-algebras~\cite{phillips}, one would hope that the
Grothendieck group $K_0$ with relevant ingredients might act as a complete invariant for
Leavitt path algebras; particularly since $K_0(L_K(E))$ is the group completion of
$\VV(L_K(E))$. However, unless the graph consists of only cycles with no exit,
$\VV(L_K(E))$ is not a cancellative monoid (Lemma~\ref{iffcancellative}) and thus
$\VV(L_K(E))\rightarrow K_0(L_K(E))$ is not injective, reflecting that $K_0$ might not
capture all the properties of $L_K(E)$.

For a graded ring $A$ one can consider the abelian monoid of isomorphism classes of
graded finitely generated projective modules denoted by $\VV^{\gr}(A)$. Since a Leavitt
path algebra has a canonical $\mathbb Z$-graded structure, one can consider
$\VV^{\gr}(L_K(E))$. One of the main aims of this paper is to show that the graded monoid
carries substantial information about the algebra.

In Sections~\ref{sectionfive} and \ref{sectionsix} we will use the results on smash
products obtained in Section~\ref{sectionthree} to study the graded monoid of Leavitt
path algebras and Kumjian--Pask algebras. In this section we collect the facts we need on
the graded monoid of a graded ring with graded local units.

\subsection{The monoid of a graded ring with graded local units}\label{kjhgfrty}

For a ring $A$ with unit, the monoid $\mathcal{V}(A)$ is defined as the set of
isomorphism classes $[P]$ of finitely generated projective $A$-modules $P$, with addition
given by $[P] + [Q] = [P\oplus Q]$.

For a non-unital ring $A$, we consider a unital ring $\widetilde{A}$ containing $A$ as a
two-sided ideal and define
\begin{equation}\label{monoididempotent0}
\VV(A)=\{[P] \mid  P \text{~is a finitely generated projective~} \widetilde{A}\-\text{module and } P=AP\}.
\end{equation}
This construction does not depend on the choice of $\widetilde{A}$, as can be seen from
the following alternative description: $\VV(A)$ is the set of equivalence classes of
idempotents in $M_{\infty}(A)$, where $e\sim f$ in $M_{\infty}(A)$ if and only if there
are $x, y\in M_{\infty}(A)$ such that $e=xy$ and $f=yx$ (\cite[pp. 296]{mm}).

When $A$ has local units,
\begin{equation}\label{monoididempotent}
\VV(A)=\{[P] \mid  P \text{~is a finitely generated projective~} A\-\text{module in~} A\-\Mod\}.
\end{equation}

To see this, recall that the \emph{unitisation ring} $\widetilde{A}$ of a ring $A$ is a
copy of $\mathbb{Z}\times A$ with componentwise addition, and with multiplication given
by
\[
    (n, a)(m, b)=(nm, ma+nb+ab)\quad\text{for all $n,m\in \mathbb{Z}$ and $a, b\in A$.}
\]
The forgetful functor provides a category isomorphism from $\widetilde{A}\-\Mod$ to the
category of arbitrary left $A$-modules \cite[Proposition 8.29B]{faith}. Any $A$-module
$N$ can be viewed as a $\widetilde{A}$-module via $(m,b)x=mx+bx$ for $(m,
b)\in\widetilde{A}$ and $x\in N$. By \cite[Lemma 10.2]{ag}, the projective objects in
$A\-\Mod$ are precisely those which are projective as $\widetilde{A}$-modules; that is,
the projective $\widetilde{A}$-modules $P$ such that $AP=P$. Any finitely generated
$\widetilde{A}$-module $M$ with $AM=M$ is a finitely generated $A$-module. In fact,
suppose that $M$ is generated as an $\widetilde{A}$-module by $x_{1}, \cdots, x_{n}$.
Since $AM=M$, each $x_i$ can be written as $x_{i}=\sum_{j=1}^{t_{i}}b_{j}x_{ij}$ for some
$b_{j}\in A$ and $x_{ij}\in M$. Now any $m\in M$ can be written
\[
m = \sum_{i=1}^{n}a_{i}x_{i} = \sum_{i=1}^{n}\sum_{j=1}^{t_{i}}a_{i}b_{j}x_{ij}
\]
So $\{x_{ij} \mid 1 \le i \le n\text{ and }1 \le j \le t_{i}\}$ generates $M$ as an
$A$-module. Clearly any finitely generated $A$-module is a finitely generated
$\widetilde{A}$-module. So the definitions of $\VV(A)$ in
(\ref{monoididempotent0})~and~(\ref{monoididempotent}) coincide.

We need a graded version of (\ref{monoididempotent}) as this presentation will be used to
study the monoid associated to the Leavitt path algebras of arbitrary graphs.

Recall that for a group $\G$ and a $\G$-graded ring $A$ with unit, the monoid
$\VV^{\gr}(A)$ consists of isomorphism classes $[P]$ of graded finitely generated
projective $A$-modules with the direct sum $[P] + [Q] = [P\oplus Q]$ as the addition
operation.

For a non-unital graded ring $A$, a similar construction as in (\ref{monoididempotent0})
can be carried over to the graded setting (see~\cite[\S3.5]{haz}). Let $\widetilde{A}$ be
a $\G$-graded ring with identity such that $A$ is a graded two-sided ideal of $A$. For
example, consider $\widetilde{A}=\mathbb{Z}\times A$. Then $\widetilde{A}$ is $\G$-graded
with
\[
\widetilde{A}_{0}=\mathbb{Z}\times A_{0},\qquad\text{ and }\qquad
    \widetilde{A}_{\g} = 0\times A_{\g} \text{ for } \g\neq 0.
\]
Define
\begin{equation}\label{grlocal0}
\VV^{\gr}(A)=\{[P] \mid P \text{~is a graded finitely generated projective~} \widetilde{A}\-\text{module and~} AP=P\},
\end{equation}
where $[P]$ is the class of graded $\widetilde{A}$-modules, graded isomorphic to $P$, and
addition is defined via direct sum. Then $\VV^{\gr}(A)$ is isomorphic to the monoid of
equivalence classes of graded idempotent matrices over $A$ \cite[pp.~146]{haz}.

Let $A$ be a $\G$-graded ring with graded local units. We will show that
\begin{equation} \label{grlocal}
\VV^{\gr}(A)=\{[P] \mid  P \text{~is a graded finitely generated projective~} A\-\text{module in~} A\-\Gr\}.
\end{equation}

For this we need to relate the graded projective modules to modules which are projective.
A graded $A$-module $P$ in $A\-\Gr$ is called a graded projective $A$-module if for any
epimorphism $\pi:M\xra N$ of graded $A$-modules in $A\-\Gr$ and any morphism $f:P\xra N$
of graded $A$-modules in $A\-\Gr$, there exists a morphism $h:P\xra M$ of graded
$A$-modules such that $\pi\c h=f$.

In the case of unital rings, a module is graded projective if and only if it is graded
and projective~\cite[Prop.~1.2.15]{haz}. We need a similar statement in the setting of
rings with local units.

\begin{lem}\label{grprojnon}
Let $A$ be a $\G$-graded ring with graded local units and $P$ a graded unital left
$A$-module. Then $P$ is a graded projective left $A$-module in $A\-\Gr$ if and only if
$P$ is a graded left $A$-module which is projective in $A\-\Mod$.
\end{lem}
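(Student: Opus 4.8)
The plan is to exhibit a single graded free module covering $P$ and to read off both implications from it. For each homogeneous $x\in P$ of degree $\mu=\deg(x)$, the map $A(\mu^{-1})\to P$, $a\mapsto ax$, is a graded homomorphism of degree $\varepsilon$ (an element $a\in A(\mu^{-1})_{\g}=A_{\g\mu^{-1}}$ is sent to $ax\in P_{\g}$), and a homogeneous local unit $u$ with $ux=x$ shows $x$ lies in its image. Summing over all homogeneous $x$ yields a graded epimorphism $\pi\colon F\to P$ with $F=\bigoplus_{x}A(\deg(x)^{-1})$. I will use two properties of $F$: as a direct sum of shifts of the regular module it is graded projective in $A\-\Gr$, and its underlying ungraded module is free, hence projective in $A\-\Mod$. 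Both are the graded and ungraded forms of ``free implies projective'' and follow from the graded-local-unit hypothesis.

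For the forward implication, assume $P$ is graded projective in $A\-\Gr$. Applying its defining lifting property to $\pi$ and $\mathrm{id}_P$ yields a graded section $h\colon P\to F$ with $\pi\c h=\mathrm{id}_P$. Forgetting the grading, $h$ and $\pi$ remain $A$-module maps, so $P$ is an ungraded direct summand of $F$; since $F$ is projective in $A\-\Mod$, so is $P$, and $P$ is graded by hypothesis.

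For the converse, assume $P$ is graded and projective in $A\-\Mod$. Then the ungraded map underlying $\pi$ splits: there is an $A$-module homomorphism $s\colon P\to F$, not a priori graded, with $\pi\c s=\mathrm{id}_P$. I would then pass to the degree-$\varepsilon$ part: for homogeneous $x\in P_\mu$ set $s_\varepsilon(x):=s(x)_\mu$, the component of $s(x)$ lying in the same degree as $x$, and extend additively. The key check is that $s_\varepsilon$ is an $A$-module homomorphism: for $a\in A_\lambda$ and $x\in P_\mu$ one has $ax\in P_{\lambda\mu}$ and $s_\varepsilon(ax)=s(ax)_{\lambda\mu}=(a\,s(x))_{\lambda\mu}=a\,s(x)_\mu=a\,s_\varepsilon(x)$, the degree bookkeeping closing up precisely because $\lambda^{-1}(\lambda\mu)=\mu$. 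By construction $s_\varepsilon$ is graded of degree $\varepsilon$, and since $\pi$ is graded, comparing the degree-$\mu$ components of $\pi(s(x))=x$ gives $\pi(s_\varepsilon(x))=x$, so $\pi\c s_\varepsilon=\mathrm{id}_P$. Thus $P$ is a graded direct summand of the graded projective module $F$, hence graded projective.

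I expect the main obstacle to be exactly this last upgrade. An ungraded splitting need not be graded, and over a nonabelian $\G$ the naive homogeneous components of a module homomorphism are \emph{not} themselves homomorphisms; what rescues the argument is that the single degree-$\varepsilon$ component is a homomorphism, and it is precisely the component needed to split the degree-$\varepsilon$ map $\pi$. A secondary, purely technical point is the non-unital bookkeeping: building $F$ from homogeneous local units and confirming that graded free and free modules are projective in $A\-\Gr$ and $A\-\Mod$ respectively without a global identity.
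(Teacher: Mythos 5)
Your overall strategy is essentially the paper's: both directions are read off from a graded cover of $P$ built from homogeneous local units, the forward implication by forgetting the grading of a graded splitting, the converse by producing a graded splitting from an ungraded one. Your degree-$\varepsilon$ extraction $s_\varepsilon(x):=s(x)_{\deg(x)}$ is correct as you check it (the paper instead verifies the lifting property against an arbitrary graded epimorphism, first showing such epimorphisms are surjective, and then invokes a lemma from the graded-rings literature to replace an ungraded lift by a graded one --- your hands-on version of that step is fine and arguably more self-contained).

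The genuine gap is the point you dismiss as ``purely technical'': you take $F=\bigoplus_x A(\deg(x)^{-1})$, a direct sum of shifted copies of the \emph{whole ring}, and assert that $F$ is projective in $A\-\Mod$ and graded projective in $A\-\Gr$ because it is ``free''. For a ring without identity, ``free implies projective'' is exactly what fails: $A$ itself, as a left $A$-module, is only a directed union $\bigcup_e Ae$ of the cyclic modules generated by local units, not a module with a basis, and there is no reason for it (or for a direct sum of shifted copies of it) to be projective. Both of your implications lean on this --- the forward one needs $F$ projective in $A\-\Mod$ to conclude that its ungraded summand $P$ is projective, and the converse needs $F$ graded projective to conclude that its graded summand $P$ is graded projective --- so as written neither direction closes. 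The fix is exactly what the paper does: for each homogeneous $x$ choose a homogeneous idempotent $e_x$ with $e_x x = x$ and set $F=\bigoplus_x Ae_x(\deg(x)^{-1})$. The cyclic module $Ae$ generated by an idempotent \emph{is} projective (a morphism out of it is determined by the image of $e$, and any preimage $m$ of that image with $em$ in place of $m$ gives a lift), and the same argument with a homogeneous idempotent and a homogeneous preimage gives graded projectivity of $Ae_x(\deg(x)^{-1})$. With that substitution the rest of your argument, including the map $a\mapsto ax$ and the degree-$\varepsilon$ splitting, goes through unchanged.
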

\begin{proof}
First suppose that $P$ is a graded projective $A$-module in $A\-\Gr$. It suffices to
prove that $P$ is  projective in $A\-\Mod$. For any homogeneous element $p\in P$ of
degree $\delta_{p}$, there exists a homogeneous idempotent $e_{p}\in A$ such that
$e_{p}p=p$. Let $\bigoplus_{p\in P^{h}}Ae_{p}(-\delta_{p})$ be the direct sum of graded
$A$-modules where ${\rm deg}(e_{p})=\delta_{p}$ and $P^{h}$ is the set of homogeneous
elements of $P$. Then there exists a surjective graded $A$-module homomorphism
\[
f: \bigoplus_{p\in P^{h}}Ae_{p}(-\delta_{p})\longrightarrow P
\]
such that $f(ae_{p})=ae_{p}p=ap$ for $a\in Ae_{p}$. Since $P$ is graded projective, there
exists a graded $A$-module homomorphism $g:P\xra \bigoplus_{p\in
P^{h}}Ae_{p}(-\delta_{p})$ such that $fg={\rm Id}_{P}$. Forgetting the grading, $P$ is a
direct summand of $\bigoplus_{p\in P^{h}}Ae_{p}$ as an $A$-module. By \cite[49.2(3)]{wi},
$\bigoplus_{p\in P^{h}}Ae_{p}$ is projective in $A\-\Mod$. So $P$ is projective in
$A\-\Mod$.

Conversely, suppose that $P$ is a graded and projective $A$-module. Let $\pi:M\xra N$ be
an epimorphism of graded $A$-modules in $A\-\Gr$ and $f:P\xra N$ a morphism of graded
$A$-modules in $A\-\Gr$. We first claim that any epimorphism $\pi:M\xra N$ of graded
$A$-modules in $A\-\Gr$ is surjective. To prove the claim, write $A^{h}$ for the set of
all homogeneous elements of $A$. Let $X = \{x\in N \mid A^{h}x\subseteq \pi(M)\}\subseteq
N$ (cf. \cite[\S 5.3]{go}). Then $X$ is a graded submodule of $N$. We denote by $q:N\xra
N/X$ the quotient map. Then $q\c\pi=0$. Hence, $q=0$, giving $N=X$. It follows that
$N=\pi(M)$. So the epimorphism $\pi:M\xra N$ of graded $A$-modules in $A\-\Gr$ is
surjective. Forgetting the grading, $\pi:M\xra N$ is a surjective morphism of $A$-modules
in $A\-\Mod$. Since $P$ is projective in $A\-\Mod$, there exists $h:P\xra M$ such that
$\pi\c h=f$. By  \cite[Lemma 1.2.14]{haz}, there exists a morphism $h':P\xra M$ of graded
$A$-modules such that $\pi\c h'=f$. Thus, $P$ is a graded projective left $A$-module in
$A\-\Gr$.
\end{proof}

Thus for a $\G$-graded ring $A$ with graded local units, combining Lemma~\ref{grprojnon}
with \cite[Lemma 10.2]{ag} (i.e., projective objects in $A\-\Mod$ are precisely those
that are projective as $\widetilde{A}$-modules),  $P$ is a graded finitely generated
projective $\widetilde{A}$-module with $AP=P$ if and only if $P$ is a finitely generated
$A$-module which is graded  projective in $A\-\Gr$. This shows that the definitions of
$\VV^{\gr}(A)$ by (\ref{grlocal0}) and (\ref{grlocal}) coincide.

\section{Application: Leavitt path algebras}\label{sectionfive}
In this section we study the monoid $\VV^{\gr}(L_K(E))$ of the Leavitt path algebra of a
graph $E$ (\ref{grlocal}). Using the results on smash products of Steinberg algebras
obtained in Section~\ref{sectionthree}, we give a presentation for this monoid in line
with $M_E$ (see~\S\ref{stlib}). Using this presentation we show that $\VV^{\gr}(L_K(E))$
is a cancellative monoid and thus the natural map $\VV^{\gr}(L_K(E)) \rightarrow
K_0^{\gr}(L_K(E))$ is injective (Corollary~\ref{corinj}). It follows that there is a
lattice correspondence between  the graded ideals of $L_K(E)$ and the graded ordered ideals
of $K_0^{\gr}(L_K(E))$ (Theorem~\ref{statelib2}). This is further evidence for the
conjecture that the graded Grothendieck group $K_0^{\gr}$ may be a complete invariant for
Leavitt path algebras~\cite{haz2013}.

\subsection{Leavitt path algebras modelled as Steinberg algebras}\label{subsection41}
We briefly recall the definition of Leavitt path algebras and establish notation. We
follow the conventions used in the literature of this topic (in particular the paths are
written from left to right).

A directed graph $E$ is a tuple $(E^{0}, E^{1}, r, s)$, where $E^{0}$ and $E^{1}$ are
sets and $r,s$ are maps from $E^1$ to $E^0$. We think of each $e \in E^1$ as an arrow
pointing from $s(e)$ to $r(e)$. We use the convention that a (finite) path $p$ in $E$ is
a sequence $p=\a_{1}\a_{2}\cdots \a_{n}$ of edges $\a_{i}$ in $E$ such that
$r(\a_{i})=s(\a_{i+1})$ for $1\leq i\leq n-1$. We define $s(p) = s(\a_{1})$, and $r(p) =
r(\a_{n})$. If $s(p) = r(p)$, then $p$ is said to be closed. If $p$ is closed and
$s(\a_i) \neq s(\a_j)$ for $i\neq j$, then $p$ is called a cycle. An edge $\a$ is an exit
of a path $p=\a_1\cdots \a_{n}$ if there exists $i$ such that $s(\a)=s(\a_i)$ and
$\a\neq\a_i$. A graph $E$ is called acyclic if there is no closed path in $E$.

A directed graph $E$ is said to be \emph{row-finite} if for each vertex $u\in E^{0}$,
there are at most finitely many edges in $s^{-1}(u)$. A vertex $u$ for which $s^{-1}(u)$
is empty is called a \emph{sink}, whereas $u\in E^{0}$ is called an \emph{infinite
emitter} if $s^{-1}(u)$ is infinite. If $u\in E^{0}$ is neither a sink nor an infinite
emitter, then it is called a \emph{regular vertex}.

\begin{defi} \label{defleavitt} Let $E$ be a directed graph and $R$ a commutative ring with unit. The \emph{Leavitt path algebra} $L_{R}(E)$ of $E$ is the $R$-algebra generated by the set $\{v \mid v\in E^{0}\}\cup \{e \mid e\in E^{1}\}
\cup\{e^{*} \mid e\in E^{1}\}$ subject to the following relations:
\begin{itemize}
\item[(0)] $uv=\delta_{u, v}v$ for every $u, v\in E^{0}$;
\item[(1)] $s(e)e=er(e)=e$ for all $e\in E^{1}$;
\item[(2)] $r(e)e^{*}=e^{*}=e^{*}s(e)$ for all $e\in E^{1}$;
\item[(3)] $e^{*}f=\delta_{e, f}r(e)$ for all $e, f\in E^{1}$; and
\item[(4)] $v = \sum_{e\in s^{-1}(v)}ee^{*}$ for every regular vertex $v\in
    E^{0}$.
\end{itemize}
\end{defi}

Let $\G$ be a group with identity $\varepsilon$, and let $w :E^{1}\xra \G$ be a function.
Extend $w$ to vertices and ghost edges by defining  $w(v) = \varepsilon$ for $v\in E^{0}$
and $w(e^{*}) = w(e)^{-1}$ for $e\in E^{1}$. The relations in Definition~\ref{defleavitt}
are compatible with $w$, so there is a grading of $L_R(E)$ such that $e \in
L_R(E)_{w(e)}$ and $e^* \in L_R(E)_{w(e)^{-1}}$ for all $e \in E^1$, and $v \in
L_R(E)_\varepsilon$ for all $v \in E^0$. The set of all finite sums of distinct elements
of $E^{0}$ is a set of graded local units for $L_R(E)$ \cite[Lemma 1.6]{ap}. Furthermore,
$L_{R}(E)$ is unital if and only if $E^0$ is finite.

Leavitt path algebras associated to arbitrary graphs can be realised as Steinberg
algebras.  We recall from \cite[Example 2.1]{cs} the construction of the groupoid
$\mg_{E}$ from an arbitrary graph $E$, which was introduced in \cite{kprr} for row-finite
graphs and generalised to arbitrary graphs in \cite{pa}. We then realise the  Leavitt
path algebra $L_R(E)$ as the Steinberg algebra $A_R(\mg)$. This allows us to apply
Theorem~\ref{isothm} to the setting of Leavitt path algebras.

Let $E=(E^{0}, E^{1}, r, s)$ be a directed graph. We denote by $E^{\infty}$ the set of
infinite paths in $E$ and by $E^{*}$ the set of finite paths in $E$. Set
\[
X := E^{\infty}\cup  \{\mu\in E^{*}  \mid   r(\mu) \text{ is not a regular vertex}\}.
\]
Let
\[
\mg_{E} := \{(\a x,|\a|-|\b|, \b x) \mid   \a, \b\in E^{*}, x\in X, r(\a)=r(\b)=s(x)\}.
\]
We view each $(x, k, y) \in \mg_{E}$ as a morphism with range $x$ and source $y$. The
formulas $(x,k,y)(y,l,z)= (x,k + l,z)$ and $(x,k,y)^{-1}= (y,-k,x)$ define composition
and inverse maps on $\mg_{E}$ making it a groupoid with $\mg_{E}^{(0)}=\{(x, 0, x) \mid
x\in X\}$ which we identify with the set $X$.

Next, we describe a topology on $\mg_{E}$. For $\mu\in E^{*}$ define
\[
Z(\mu)= \{\mu x \mid x \in X, r(\mu)=s(x)\}\subseteq X.
\]
For $\mu\in E^{*}$ and a finite $F\subseteq s^{-1}(r(\mu))$, define
\[
Z(\mu\setminus F) = Z(\mu) \setminus \bigcup_{\a\in F} Z(\mu \a).
\]
The sets $Z(\mu\setminus F)$ constitute a basis of compact open sets for a locally
compact Hausdorff topology on $X=\mg_{E}^{(0)}$ (see \cite[Theorem 2.1]{we}).

For $\mu,\nu\in E^{*}$ with $r(\mu)=r(\nu)$, and for a finite $F\subseteq E^{*}$ such
that $r(\mu)=s(\a)$ for $\a\in F$, we define
\[
Z(\mu, \nu)=\{(\mu x, |\mu|-|\nu|, \nu x) \mid  x\in X, r(\mu)=s(x)\},
\]
and then
\[
Z((\mu, \nu)\setminus F) = Z(\mu, \nu) \setminus \bigcup_{\a\in F}Z(\mu\a, \nu\a).
\]
The sets $Z((\mu, \nu)\setminus F)$ constitute a basis of compact open bisections for a
topology under which $\mg_{E}$ is a Hausdorff ample groupoid. By \cite[Example~3.2]{cs},
the map
\begin{equation}
\label{assteinberg}
\pi_{E} : L_{R}(E) \longrightarrow A_{R}(\mg_{E})
\end{equation}
defined by $\pi_{E}(\mu\nu^{*}-\sum_{\a\in F}\mu\a\a^{*}\nu^{*}) =
1_{Z((\mu,\nu)\setminus F)}$ extends to an algebra isomorphism. We observe that the
isomorphism of algebras in \eqref{assteinberg} satisfies
\begin{equation}
\label{valuation}
\pi_{E}(v)=1_{Z(v)}, \quad\pi_{E}(e)=1_{Z(e, r(e))}, \quad\pi_{E}(e^{*})=1_{Z(r(e), e)},
\end{equation}
for each $v\in E^{0}$ and $e\in E^{1}$.

\subsection{Covering of a graph}\label{covergraph}
In this section we show that the smash product of a Leavitt path algebra is isomorphic to
the Leavitt path algebra of its covering graph. We briefly recall the concept of skew
product or covering of a graph (see~\cite[\S2]{gr} and \cite[Def.~2.1]{kp}).

Let $\G$ be a group and $w:E^{1}\xra \G$ a function. As in \cite[\S2]{gr}, the
\emph{covering graph} $\overline{E}$ of $E$ with respect to $w$ is given by
\begin{gather*}
    \overline{E}^{0} = \{v_{\a} \mid  v\in E^{0}\text{ and } \a\in\G\},\qquad
    \overline{E}^{1} = \{e_\a \mid e \in E^1\text{ and } \a \in \G\},\\
    s(e_\a) = s(e)_\a,\quad\text{ and } r(e_\a) = r(e)_{w(e)^{-1}\a}.
\end{gather*}

\begin{exm}\label{onetwo3}
Let $E$ be a graph and define $w:E^1 \rightarrow \Z$ by $w(e) = 1$ for all $e \in E^1$.
Then $\overline E$ (sometimes denoted $E\times_1 \mathbb Z$) is given by
\begin{gather*}
    \overline E^0 = \big\{v_n \mid v \in E^0 \text{ and } n \in \Z \big\},\qquad
    \overline E^1 = \big\{e_n \mid e\in E^1 \text{ and } n\in \Z \big\},\\
    s(e_n) = s(e)_n,\qquad\text{ and } r(e_n) = r(e)_{n-1}.
\end{gather*}

As examples, consider the following graphs
\begin{equation*}
{\def\labelstyle{\displaystyle}
E : \quad \,\, \xymatrix{
 u \ar@(lu,ld)_e\ar@/^0.9pc/[r]^f & v \ar@/^0.9pc/[l]^g
 }} \qquad \quad
{\def\labelstyle{\displaystyle}
F: \quad \,\, \xymatrix{
   u \ar@(ur,rd)^e  \ar@(u,r)^f
}}
\end{equation*}
Then
\begin{equation*}
\overline{E}: \quad \,\,\xymatrix@=15pt{
\dots  {u_{1}} \ar[rr]^-{e_1} \ar[drr]^(0.4){f_1} &&  {u_{0}} \ar[rr]^-{e_0} \ar[drr]^(0.4){f_0} && {u_{-1}}  \ar[rr]^-{e_{-1}} \ar[drr]^(0.4){f_{-1}} && \cdots\\
\dots {v_{1}}   \ar[urr]_(0.3){g_1} && {v_{0}} \ar[urr]_(0.3){g_0}  && {v_{-1}}  \ar[urr]_(0.3){g_{-1}}&& \cdots
}
\end{equation*}
and
\begin{equation*}
\overline{F}: \quad \,\,\xymatrix@=15pt{
\dots  {u_{1}} \ar@/^0.9pc/[r]^{f_1} \ar@/_0.9pc/[r]_{e_1}  &  {u_{0}} \ar@/^0.9pc/[r]^{f_0} \ar@/_0.9pc/[r]_{e_0} & {u_{-1}}  \ar@/^0.9pc/[r]^{f_{-1}}  \ar@/_0.9pc/[r]_{e_{-1}} & \quad \cdots
}
\end{equation*}
\end{exm}

If $E$ is any graph, and $w : E^1 \to \G$ any function, we extend $w$ to $E^{*}$ by
defining $w(v) = 0$ for $v \in E^0$, and $w(\a_{1}\cdots \a_{n}) = w(\a_{1})\cdots
w(\a_{n})$. We obtain from \cite[Lemma 2.3]{kp} a continuous cocycle
$\widetilde{w}:\mg_{E}\xra \G$ satisfying
\[
    \widetilde{w}(\a x, |\a|-|\b|, \b x) = w(\a) w(\b)^{-1}.
\]
By Lemma \ref{ample} the skew-product groupoid $\mg_{E}\times \G$ is a $\G$-graded ample
groupoid. For each (possibly infinite) path $x = e^1 e^2 e^3 \dots \in E$ and each $\g
\in \G$ there is a path $x_\g$ of $\overline{E}$ given by
\begin{equation}\label{eq:pathlift}
x_\g = e^1_\g e^2_{w(e_1)^{-1}\g} e^3_{w(e^1e^2)^{-1}\g} \dots.
\end{equation}
There is an isomorphism
\[
f : \mg_{E}\times \G \longrightarrow \mg_{\overline{E}}
\]
of groupoids such that $f((x, k, y), \g) = (x_{\widetilde{w}(x,k,y)\g}, k, y_\g)$ (see~\cite[Theorem 2.4]{kp}).
The grading of the skew-product $\mg_{E}\times \G$ induces a grading of
$\mg_{\overline{E}}$, and the isomorphism $f$ respects the gradings of the two groupoids,
and so induces a graded isomorphism of Steinberg algebras
\begin{equation*}
\widetilde{f}:A_{R}(\mg_{E}\times \G) \longrightarrow A_{R}(\mg_{\overline{E}}).
\end{equation*}
Set $g=\widetilde{f}^{-1}:A_{R}(\mg_{\overline{E}}) \longrightarrow A_{R}(\mg_{E}\times
\G)$. Then
\begin{align}
\label{inducediso}
    g(1_{Z(v_{\g})}) &= 1_{Z(v)\times \{\g\}} \text{~for~} v\in E^{0} \text{~and~} \g\in\G, \nonumber\\
    g(1_{Z(e_\a, r(e)_{w(e)^{-1}\a})}) &= 1_{Z(e, r(e))\times \{w(e)^{-1}\a\}} \text{~for~} e\in E^{1} \text{~and~} \a\in\G, \\
    g(1_{Z(r(e)_{w(e)^{-1}\a}, e_\a)}) &= 1_{Z(r(e), e)\times \{\a\}} \text{~for~} e\in E^{1} \text{~and~} \a\in\G. \nonumber
\end{align}

Let $\phi: A_{R}(\mg_{E}\times \G) \to A_{R}(\mg_{E})\#\G$ be the isomorphism of
Theorem~\ref{isothm}, let $g : A_{R}(\mg_{\overline{E}}) \to A_{R}(\mg_{E}\times \G)$ be
the isomorphism~\eqref{inducediso}, let $\pi_E : L_R(E) \to A_R(\mg_E)$ and
$\pi_{\overline{E}} : L_R(\overline{E}) \to A_R(\mg_{\overline{E}})$ be as
in~\eqref{assteinberg}, and let $\widetilde{\pi}_{E} : L_R(E)\#\G \to A_R(\mg_E)\#\G$ be
given by $\widetilde{\pi}_E(xp_{\g}) = \pi_{E}(x)p_{\g}$ for $x\in L_{R}(E)$ and $\g\in
\G$. Define $\phi' := \widetilde{\pi}_{E}^{-1} \c \phi \c g \c \pi_{\overline{E}}$. Then
we have the following commuting diagram:
\begin{equation}\label{eq:comm diag}
\begin{split}
\xymatrix{ L_{R}(\overline{E})\ar[r]^{\phi'}\ar[d]^{\pi_{\overline{E}}}&L_{R}(E)\#\G\ar[d]^{\widetilde{\pi}_{E}}\\
A_{R}(\mg_{\overline{E}})\ar[r]^{\phi\circ g}&A_{R}(\mg_{E})\#\G.}
\end{split}
\end{equation}

\begin{cor} \label{coriso}
The map $\phi':L_{R}(\overline{E})\xra L_{R}(E)\#\G$ is an isomorphism of $\G$-graded
algebras such that $\phi'(v_{\b})=vp_{\b}$, $\phi'(e_{\a})=ep_{w(e)^{-1}\a}$ and
$\phi'(e^{*}_{\a})=e^{*}p_{\a}$ for $v\in E^{0}$, $e \in E^{1}$, and $\a,\b\in\G$.
\end{cor}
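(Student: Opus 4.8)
The plan is to exploit the fact that $\phi'$ is \emph{defined} as the composite $\widetilde{\pi}_E^{-1}\c\phi\c g\c\pi_{\overline E}$, so the substantive work is not to build a map but to identify what each factor does. Since $\pi_{\overline E}$ is an isomorphism (by~\eqref{assteinberg}, applied to $\overline E$), $g$ is an isomorphism (by~\eqref{inducediso}), $\phi$ is an isomorphism (Theorem~\ref{isothm}), and $\widetilde{\pi}_E$ is an isomorphism with inverse $xp_\g\mapsto\pi_E^{-1}(x)p_\g$, the composite $\phi'$ is automatically a ring isomorphism. Thus it remains only to compute $\phi'$ on the generators $v_\b$, $e_\a$, $e^*_\a$ of $L_R(\overline E)$ and to check that $\phi'$ respects the gradings.

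For the generator computation I would chase each generator along the four arrows, reading the value at each stage off the displayed formulas. For a vertex $v_\b$ of $\overline E$, applying~\eqref{valuation} to $\overline E$ gives $\pi_{\overline E}(v_\b)=1_{Z(v_\b)}$; then~\eqref{inducediso} gives $g(1_{Z(v_\b)})=1_{Z(v)\times\{\b\}}$; then Theorem~\ref{isothm} gives $\phi(1_{Z(v)\times\{\b\}})=1_{Z(v)}p_\b$; and finally $\widetilde{\pi}_E^{-1}(1_{Z(v)}p_\b)=vp_\b$ since $\pi_E(v)=1_{Z(v)}$. The identical chase applied to $e_\a$, using that $r(e_\a)=r(e)_{w(e)^{-1}\a}$ in $\overline E$ together with the second line of~\eqref{inducediso}, yields $\phi'(e_\a)=ep_{w(e)^{-1}\a}$; and applied to $e^*_\a$, using the third line of~\eqref{inducediso}, yields $\phi'(e^*_\a)=e^*p_\a$. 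The only subtlety is the bookkeeping of the degree shift $w(e)^{-1}\a$ occurring in~\eqref{inducediso}, which is precisely where the source/range conventions of the covering graph $\overline E$ enter.

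For gradedness, I would observe that each of the four factors is a graded homomorphism, so the composite is as well: $\phi$ and $g$ are graded by Theorem~\ref{isothm} and the construction of~\eqref{inducediso}; $\widetilde{\pi}_E$ is graded because $\pi_E$ is graded and, by~\eqref{sydhyhy}, $xp_\g$ is homogeneous of the degree of $x$; and $\pi_{\overline E}$ is graded once $L_R(\overline E)$ is equipped with the $\G$-grading determined by $\deg(v_\b)=\varepsilon$, $\deg(e_\a)=w(e)$, $\deg(e^*_\a)=w(e)^{-1}$, i.e. the grading induced by the weight $\overline w(e_\a)=w(e)$. Equivalently, one may bypass this identification and read gradedness directly off the generator formulas just obtained, since $vp_\b\in(L_R(E)\#\G)_\varepsilon$, $ep_{w(e)^{-1}\a}\in(L_R(E)\#\G)_{w(e)}$ and $e^*p_\a\in(L_R(E)\#\G)_{w(e)^{-1}}$, so $\phi'$ carries each homogeneous generator to a homogeneous element of the same degree. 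I expect the main obstacle to be clerical rather than conceptual: confirming that the natural grading on $L_R(\overline E)$ coincides with the one transported through $\pi_{\overline E}$ from the skew-product cocycle on $\mg_{\overline E}$, and keeping the index shifts in~\eqref{inducediso} straight across the three generator computations.
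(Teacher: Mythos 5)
Your proposal is correct and follows essentially the same route as the paper: the paper likewise observes that all four maps in the diagram~\eqref{eq:comm diag} are graded isomorphisms, so $\phi'$ is one, and then computes $\phi'$ on the generators $v_\b$, $e_\a$, $e^*_\a$ by chasing them through $\pi_{\overline E}$, $g$, $\phi$, and $\widetilde{\pi}_E^{-1}$ via~\eqref{valuation},~\eqref{inducediso}, and Theorem~\ref{isothm}, exactly as you describe. Your extra remarks on verifying the grading on $L_R(\overline E)$ are sound but not a departure from the paper's argument.
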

\begin{proof}
Since all the homomorphisms in the diagram~\eqref{eq:comm diag} preserve gradings of
algebras, the map $\phi':L_{R}(\overline{E})\longrightarrow L_{R}(E)\#\G$ is an
isomorphism of $\G$-graded algebras. For each vertex $v_{\g}\in \overline{E}^{0}$ and
each edge $e_{\a}\in \overline{E}^{1}$, we have
\begin{align*}
\phi'(v_{\g})
    &= (\widetilde{\pi}_{E}^{-1}\c\phi\c g)(1_{Z(v_{\g})})
     = (\widetilde{\pi}_{E}^{-1}\c\phi)(1_{Z(v)\times\{\g\}})
     = \widetilde{\pi}_{E}^{-1}(1_{Z(v)}p_{\g})=vp_{\g},\\
\phi'(e_\a)
    &= (\widetilde{\pi}_{E}^{-1}\c\phi\c g)(1_{Z(e_\a, r(e)_{w(e)^{-1}\a})})
     = (\widetilde{\pi}_{E}^{-1}\c\phi)(1_{Z(e, r(e))\times\{w(e)^{-1}\a\}})
     = \widetilde{\pi}_{E}^{-1}(1_{Z(e, r(e))} p_{w(e)^{-1}\a})
     = e p_{w(e)^{-1}\a},\\
\intertext{and}
\phi'(e^{*}_\a)
    &= (\widetilde{\pi}_{E}^{-1}\c\phi\c g)(1_{Z(r(e)_{w(e)^{-1}\a}, e_\a)})
     = (\widetilde{\pi}_{E}^{-1}\c\phi)(1_{Z(r(e),e)\times\{\a\}})
     = \widetilde{\pi}_{E}^{-1}(1_{Z(r(e),e)}p_{\a})
     = e^{*}p_{\a}.  \qedhere
\end{align*}
\end{proof}

In~\cite{kp}, Kumjian and Pask show that given a free action of  a group $\G$ on a graph
$E$, the crossed product $C^{*}(E)\times \G$ by the induced action is strongly Morita
equivalent to $C^{*}(E/\G)$, where $E/\G$ is the quotient graph and obtained an
isomorphism similar to Corollary~\ref{coriso}  for graph $C^*$-algebras.
Corollary~\ref{coriso} shows that this isomorphism already occurs on the algebraic level
(see~\S\ref{aidancalgebra}), so the following diagram commutes:
\begin{equation*}
\xymatrix{ L_{\mathbb C}(\overline E) \ar[d] \ar[r] &L_{\mathbb C}(E)\#\G \ar[d]\\
C^*(\overline E) \ar[r]& C^*(E) \times \G.
}
\end{equation*}

\begin{rmk}
In \cite{gr}, Green showed that the theory of coverings of graphs with relations and the
theory of graded algebras are essentially the same. For a $\G$-graded algebra $A$, Green
constructed a covering of the quiver of $A$ and showed that the category of
representations of the covering satisfying a certain set of relations is equivalent to
the category of finite dimensional graded $A$-modules.

For any graph $E$ and a function $w:E^{1}\xra \G$, we consider the smash product of a
quotient algebra of the path algebra of $E$ with the group $\G$. Let $K$ be a field, $E$
a graph and $w:E^{1}\xra \G$ a weight map. Denote by $KE$ the path algebra of $E$. A
\emph{relation in $E$} is a $K$-linear combination $\sum_{i}k_{i}q_{i}$ with $q_{i}$
paths in $E$ having the same source and range. Let $r$ be a set of relations in $E$ and
$\langle r \rangle$ the two sided ideal of $KE$ generated by $r$. Set
$$\mathcal{A}_{r}(E)=KE/\langle r\rangle.$$ We denote by $\overline{r}$ the lifting of
$r$ in $\overline{E}$. For each finite path $p=e^1e^2\cdots e^n$ in $E$ and $\g\in \G$, there is a path $p^{\g}$ of $\overline{E}$ given by $$p^{\g}=e^1_{\prod_{i=1}^n w(e^i)\g}\cdots e^{n-1}_{\prod_{i=n-1}^n w(e^i)\g}e^n_{w(e^n)\g},$$ similar as ~\eqref{eq:pathlift}. More precisely, for each relation
$\sum_{i}k_{i}q_{i}\in r$ and each $\g\in \G$, we have
\[\textstyle
\sum_{i}k_{i}q_{i}^{\g} \in\overline{r}.
\] 

Set
$$\mathcal{A}_{\overline{r}}(\overline{E})=K\overline{E}/\langle \overline{r}\rangle.$$

We prove that $\mathcal{A}_{\overline{r}}(\overline{E}) \cong \mathcal{A}_{r}(E)\#\G$.
Define $h : K\overline{E} \xra \mathcal{A}_{r}(E)\#\G$ by $h(v_{\g})=vp_{\g}$ and
$h(e_\a)=ep_{w(e)^{-1}\a}$ for $v\in E^{0}$, $e\in E^{1}$ and $\a,\g\in\G$. Since $h$
annihilates the relations $\overline{r}$, it induces a homomorphism
\[
\overline{h} : \mathcal{A}_{\overline{r}}(\overline{E})\longrightarrow\mathcal{A}_{r}(E)\#\G.
\]
We show that $\overline{h}$ is an isomorphism. For injectivity, suppose that
$x=\sum_{i=1}^{m}\lambda_{i}\xi_{i}\in \mathcal{A}_{\overline{r}}(\overline{E})$ with
$\lambda_{i}\in K$ and $\xi_{i}$ pairwise distinct paths in $\overline{E}$. Each $\xi_i$ has the form of  $(\xi'_i)^{\a_i}$ for some $\xi'_i\in E^*$ and $\a_i\in\G$. If
$\overline{h}(x)=0$, then $h(x)=\sum_{i=1}^{m}\lambda_{i}\xi_{i}'p_{\a_{i}}=0$. Suppose
that the $\a_i$ are not distinct; so by rearranging, we can assume that
$\a_{1}=\cdots=\a_{k}$ for some $k\leq m$. Then $\sum_{i=1}^{k}\lambda_{i}\xi_{i}'=0$ in
$\mathcal{A}_{r}(E)$. Observe that $\sum_{i=1}^{k}\lambda_{i}\xi_{i}'=0$ in
$\mathcal{A}_{r}(E)$ implies $\sum_{i=1}^{k}\lambda_{i}\xi_{i}=0$ in
$\mathcal{A}_{\overline{r}}(\overline{E})$. Hence $x=0$, implying $\overline{h}$ is
injective. For surjectivity, fix $\eta$ in $E^*$ and $\g\in\G$. Then $h(\eta^{\g}) =\eta
p_{\g}$ by definition. Since the elements $\{\eta p_\g \mid \eta \in E^*, \g \in \G\}$
span $\mathcal{A}_r(E)\#\G$, we deduce that $\overline{h}$ is surjective. Thus
$\overline{h}$ is an isomorphism as claimed.
\end{rmk}

\subsection{\texorpdfstring{The monoid $\pmb{\VV^{\gr}(L_{K}(E))}$}{The monoid of graded modules of a Leavitt path algebra}}\label{stlib}
In this subsection, we consider the Leavitt path algebra $L_{K}(E)$ over a field $K$.
Ara, Moreno and Pardo \cite{amp} showed that for a Leavitt path algebra associated to the
graph $E$, the monoid $\mathcal{V}(L_{K}(E))$ is entirely determined by elementary
graph-theoretic data. Specifically, for a row-finite graph $E$, we define $M_E$ to be the
abelian monoid presented by $E^{0}$ subject to
\begin{equation}
\label{monoidrelation}
v=\sum_{e\in s^{-1}(v)}r(e)
\end{equation}
for every $v\in E^{0}$ that is not a sink. Theorem~3.5 of~\cite{amp} says that
$\mathcal{V}(L_{K}(E)) \cong M_E$.

There is an explicit description \cite[\S 4]{amp} of the congruence on the free abelian
monoid given by the defining relations of $M_{E}$. Let $F$ be the free abelian monoid on
the set $E^{0}$. The nonzero elements of $F$ can be written in a unique form up to
permutation as $\sum_{i=1}^{n}v_{i}$, where $v_{i}\in E^{0}$. Define a binary relation
$\xra_{1}$ on $F\setminus\{0\}$ by $\sum_{i=1}^{n}v_{i}\xra_{1}\sum_{i\neq
j}v_{i}+\sum_{e\in s^{-1}(v_{j})}r(e)$ whenever $j\in \{1, \cdots, n\}$ is such that
$v_{j}$ is not a sink. Let $\xra$ be the transitive and reflexive closure of $\xra_{1}$
on $F\setminus\{0\}$ and $\sim$ the congruence on $F$ generated by the relation $\xra$.
Then $M_{E}=F/\sim$.

Ara and Goodearl defined analogous monoids $M(E, C, S)$ and constructed natural
isomorphisms $M(E, C, S)\cong \mathcal{V}(CL_{K}(E, C, S))$ for arbitrary separated
graphs (see \cite[Theorem~4.3]{ag}). The non-separated case reduces to that of ordinary
Leavitt path algebras, and extends the result of \cite{amp} to non-row-finite graphs.

Following \cite{ag,ag2}, we recall the definition of $M_{E}$ when $E$ is not necessarily
row-finite. In \cite[\S4.1]{ag2} the generators $v\in E^{0}$ of the abelian monoid
$M_{E}$ for $E$ are supplemented by generators $q_{Z}$ as $Z$ runs through all nonempty
finite subsets of $s^{-1}(v)$ for infinite emitters $v$. The relations are
\begin{itemize}
\item[(1)] $v=\sum_{e\in s^{-1}(v)}r(e)$ for all regular vertices $v\in E^{0}$;
\item[(2)] $v=\sum_{e\in Z}r(e)+q_{Z}$ for all infinite emitters $v\in E^{0}$ and all
\item[(3)] $q_{Z_{1}}=\sum_{e\in Z_{2}\setminus Z_{1}}r(e)+q_{Z_{2}}$ for all
    nonempty finite sets $Z_{1}\subseteq Z_{2}\subseteq s^{-1}(v)$, where $v\in
    E^{0}$ is an infinite emitter.
\end{itemize}

An abelian monoid $M$ is \emph{cancellative} if it satisfies full cancellation, namely,
$x + z = y + z$ implies $x = y$, for any $x, y, z \in M$. In order to prove that the
\emph{graded} monoid associated to any Leavitt path algebra is cancellative
(Corollary~\ref{corinj}), we will need to know that the monoid associated to Leavitt path
algebras of acyclic graphs are cancellative.

\begin{lem}\label{iffcancellative}
Let $E$ be an arbitrary graph. The monoid $M_{E}$ is cancellative if and only if no cycle
in $E$ has an exit. In particular, if $E$ is acyclic, then $M_E$ is cancellative.
\end{lem}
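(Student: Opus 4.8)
The plan is to work directly with the explicit description of $M_E$ as $F/{\sim}$, where $F$ is the free abelian monoid on $E^0$ (together with the symbols $q_Z$ when $E$ is not row-finite) and $\sim$ is the congruence generated by the relations (1)--(3) recalled above. Two elementary observations will be used repeatedly. First, a commutative monoid is cancellative exactly when the canonical map to its group completion is injective. Second, no nonzero element of $F$ is congruent to $0$: each of the relations (1)--(3) has a nonempty word on both sides, so a single rewriting step never turns a nonempty element of $F$ into $0$, and hence neither does any finite chain of such steps; in particular every generator, and every nonempty sum of generators, is nonzero in $M_E$. I will prove the two implications separately.

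For the forward implication I argue the contrapositive: if some cycle has an exit, then $M_E$ is not cancellative. Let $p = \alpha_1\cdots\alpha_n$ be a cycle and suppose there is an edge $f \neq \alpha_j$ with $s(f) = s(\alpha_j) =: v$; after a cyclic relabelling I may take $j = 1$. Since no cycle vertex is a sink, I blow up $v$ using the relation at $v$ (relation (1) if $v$ is regular, or relation (2) with a finite set $Z \ni \alpha_1$ if $v$ is an infinite emitter); this replaces $v$ by $r(\alpha_1) = s(\alpha_2)$ together with a remainder $y_1 \neq 0$ that contains $r(f)$ (or a $q_Z$-term). Blowing up $s(\alpha_2), s(\alpha_3), \dots$ in turn around the cycle, each step peeling off $r(\alpha_i) = s(\alpha_{i+1})$, returns the base vertex $v = s(\alpha_{n+1})$ after $n$ steps. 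This produces a chain $v \to v + y$ in $F$ with $y \neq 0$, so $v = v + y$ in $M_E$. As $v + y = v + 0$ while $y \neq 0$ in $M_E$ by the second observation above, cancellation fails.

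The reverse implication---no cycle has an exit $\Rightarrow M_E$ cancellative---is the main obstacle. The key structural input is that, when no cycle has an exit, the definition of an exit forces $s^{-1}(s(\alpha_i)) = \{\alpha_i\}$ for every vertex on a cycle, so each such vertex is regular with a unique outgoing edge; consequently the relations collapse the vertices of each cycle to a single generator carrying no further relation, exactly as a sink does. For a \emph{finite} graph $E$ with no cycle having an exit I would first collapse each cycle to one such terminal generator; on the resulting structure the only rewritable vertices are the regular transient vertices, which lie on no cycle, so the blow-up rewriting $\to_1$ is terminating, and it is locally confluent (distinct vertices are blown up on disjoint summands, and a fixed vertex deterministically). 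By Newman's lemma it is confluent, so every element has a unique normal form supported on sinks and collapsed cycles, whence $M_E$ is the free abelian monoid on these terminal generators and is cancellative. For an arbitrary graph I would then realise $M_E$ as the directed colimit of the monoids $M_F$, with $F$ ranging over the finite subgraphs of $E$ (using the Ara--Goodearl description \cite{ag2} to handle infinite emitters and the generators $q_Z$); each such $F$ inherits the property that no cycle has an exit, so $M_F$ is cancellative by the finite case, and a directed colimit of cancellative monoids is cancellative. The acyclic case needed for Corollary~\ref{corinj} is the special instance in which there are no cycles to collapse. I expect the genuine difficulty to be twofold: verifying the confluence and normal-form analysis in the finite case, and correctly setting up the colimit presentation for non-row-finite graphs, where the $q_Z$-generators must be tracked through the connecting maps.
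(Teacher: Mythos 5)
Your proof follows a genuinely different route from the paper's in the ``no exit $\Rightarrow$ cancellative'' direction, though the forward direction (blowing up around a cycle with an exit to get $v = v + y$ with $y \neq 0$, plus the observation that no nonzero element of $F$ is congruent to $0$) is essentially identical to the paper's argument --- and your explicit justification that $y\neq 0$ in $M_E$ is a detail the paper merely asserts. For the converse, the paper deletes one edge from each cycle to obtain an acyclic subgraph $F$ and proves $M_F \cong M_E$ by showing the deleted relations are recovered by travelling around the cycle; it then handles finite acyclic graphs by quoting semisimplicity of $L_K(E_i)$ (so $M_{E_i}\cong \mathbb{N}^k$), passes to row-finite graphs by the direct-limit description over finite complete subgraphs together with the confluence lemma of Ara--Moreno--Pardo, and reduces arbitrary graphs to row-finite ones via Drinen--Tomforde desingularisation and the Morita invariance $M_E \cong M_{E_d}$. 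You instead collapse each no-exit cycle to a single terminal generator and run a termination-plus-local-confluence (Newman's lemma) analysis to exhibit $M_E$ as free abelian on sinks and collapsed cycles, then take a colimit over the Ara--Goodearl data $M(E',C',T')=M_{\widetilde E}$ to handle infinite emitters. Both routes work; yours is more self-contained (no appeal to semisimplicity of Leavitt path algebras or to desingularisation) and gives the stronger conclusion that $M_E$ is actually free on the terminal generators, while the paper's is shorter because it outsources the hard combinatorics to known results. Two points in your plan need the care you already flag: (a) the cycle relations $v \to_1 r(e_v)$ must be removed from the rewriting system \emph{before} arguing termination (going around a cycle rewrites forever), so you must justify that identifying cycle vertices yields an equivalent presentation --- this is exactly the content of the paper's $M_F\cong M_E$ step, just phrased as a quotient rather than an edge deletion; and (b) for the colimit one must check that the auxiliary finite graphs $\widetilde E$ inherit the no-exit condition, which holds because an infinite emitter can never lie on a cycle without an exit, so the added edges $e_Z$ and sinks $q_Z$ create neither cycles nor exits.
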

\begin{proof}
We first claim that $M_{E}$ is cancellative for any row-finite acyclic graph $E$. By
\cite[Lemma 3.1]{amp}, the row-finite graph $E$ is a direct limit of a directed system of
its finite complete subgraphs $\{E_{i}\}_{i\in X}$. In turn, the monoid $M_{E}$ is the
direct limit of $\{M_{E_{i}}\}_{i\in X}$ (\cite[Lemma 3.4]{amp}). We claim that $M_{E}$
is cancellative. Let $x+u=y+u$ in $M_{E}$, where $x, y,u$ are sum of vertices in $E$. By
\cite[Lemma 4.3]{amp}, there exists $b\in F$ (sum of vertices in $E$) such that $x+u\xra
b$ and $y+u\xra b$. Observe that vertices involved in this transformations are finite.
Thus there is a finite graph $E_i$ such that all these vertices are in $E_i$. It follows
that in $M_{E_i}$ we have $x+u\xra b$ and $y+u\xra b$. Thus $x+u=y+u$ in $M_{E_i}$. Since
the subgraph $E_{i}$ of $E$ is finite and acyclic, $M_{E_i}$ is a direct sum of copies of
$\mathbb N$ (as $L_K(E_i)$ is a semi-simple ring) and thus is cancellative.  So $x=y$ in
$M_{E_i}$ and so the same in $M_E$. Hence, $M_{E}$ is cancellative.

We now show that it suffices to consider the case where $E$ is a  row-finite graph in which no cycle has an exit. To see this, let $E$ be any graph, and let $E_d$ be its Drinen--Tomforde
desingularisation \cite{dt}, which is row-finite. Then $L_K(E)$ and $L_K(E_d)$ are Morita equivalent, and so
$M_E \cong M_{E_d}$  \cite[Theorem 5.6]{abramspino}. So $M_E$ has cancellation if
and only if $M_{E_d}$ has cancellation. Since no cycle in $E$ has an exit if and only if
$E_d$ has the same property, it therefore suffices to prove the result for row-finite graph $E$ in which no cycle has an exit.

Finally, we show that for any row-finite graph $E$ in which no cycle has an exit, the monoid $M_E$ is
cancellative. For this, fix a set $C \subseteq E^1$ such that $C$ contains exactly one
edge from every cycle in $E$ \cite{si3}. Let $F$ be the subgraph of $E$ obtained by
removing all the edges in $C$. We claim that $M_F \cong M_E$. To see this, observe that
they have the same generating set $F^0 = E^0$, and the generating relation
$\stackrel{F}{\xra}_1$ is contained in $\stackrel{E}{\xra}_1$. So it suffices to show
that $\stackrel{E}{\xra}_1 \subseteq \stackrel{F}{\xra}$. For this, note that for $v \in
E^0$, we have $s_E^{-1}(v) = s_F^{-1}(v)$ unless $v = s(e)$ for some $e \in C$, in which
case $s_E^{-1}(v) = \{e\}$ and $s_F^{-1}(v) = \emptyset$. So it suffices to show that for
$e \in C$, we have $s(e) \stackrel{F}{\xra} r(e)$. Let $p = e\a_2\a_3\dots\a_n$ be the
cycle in $E$ containing $e$. Then
\[
r(e) \stackrel{F}{\xra}_1 s(\a_2)
    \stackrel{F}{\xra}_1 r(\a_2)
    \stackrel{F}{\xra}_1 s(\a_3)
    \stackrel{F}{\xra}_1 r(\a_3)
    \stackrel{F}{\xra}_1 \cdots
    \stackrel{F}{\xra}_1 s(\a_n)
    \stackrel{F}{\xra}_1 r(\a_n)
    \stackrel{F}{\xra}_1 s(e).
\]
So $M_F \cong M_E$ as claimed. So the preceding paragraphs show that $M_E$ is
cancellative.

Now suppose that $E$ has a cycle with an exit; say $p = \a_1\dots\a_n$ has an exit $\a$.
Without loss of generality, $s(\a) = s(\a_n)$ and $\a \not= \a_n$. Write
\[
s(p) E^{\le n} = \{q \in E^* : s(q) = s(p),\text{ and } |q| = n\text{ or } |q|<n\text{ and } r(q)\text{ is not  regular}\}.
\]
Let $p' := \a_1\dots\a_{n-1}\a$ and $X := s(p) E^{\le n} \setminus \{p, p'\}$. A simple
induction shows that
\[
s(p) \xra \sum_{q \in s(\a)E^{\le n}} r(q)
    = r(p) + r(p') + \sum_{q \in X} r(q)
    = s(p) + r(p') + \sum_{q \in X} r(q).
\]
Since $r(p') \not= 0$ in $M_E$, it follows that $M_E$ does not have cancellation.
\end{proof}

In order to compute the monoid $\VV^{\gr}(L_{K}(E))$ for an arbitrary graph $E$, we
define an abelian monoid $M_{E}^{\gr}$ such that the generators $\{a_{v}({\g}) \mid  v\in
E^{0}, \g\in \G\}$ are supplemented  by generators $b_{Z}(\g)$ as $\g\in\G$ and $Z$ runs
through all nonempty finite subsets of $s^{-1}(u)$ for infinite emitters $u\in E^{0}$.
The relations are
\begin{enumerate}
\item[(1)] $a_{v}({\g})=\sum\limits_{e\in s^{-1}(v)}a_{r(e)}(w(e)^{-1}\g)$  for all
    regular vertices $v\in E^{0}$ and $\g\in\G$;
\item[(2)] $a_{u}({\g})=\sum\limits_{e\in Z}a_{r(e)}({w(e)^{-1}\g})+b_{Z}({\g})$ for
    all $\g\in\G$, infinite emitters $u\in E^{0}$ and nonempty finite subsets
    $Z\subseteq s^{-1}(u)$;
\item[(3)] $b_{Z_{1}}({\g})=\sum\limits_{e\in Z_{2}\setminus
    Z_{1}}a_{r(e)}({w(e)^{-1}\g})+b_{Z_{2}}({\g})$ for all $\g\in\G$, infinite
    emitters $u\in E^{0}$ and nonempty finite subsets $Z_{1}\subseteq Z_{2}\subseteq
    s^{-1}(u)$.
\end{enumerate}

The group $\Gamma$ acts on the monoid $M_{E}^{\gr}$ as follows. For any $\b\in \G$,
\begin{equation} \label{groupaction1}
    \b\cdot a_{v}({\g})=a_{v}({\b\g})\qquad\text{ and }\qquad \b\cdot b_{Z}({\g})=b_{Z}({\b\g}).
\end{equation}

There is a surjective monoid homomorphism $\pi \colon M_E^{\gr} \to M_E$ such that $\pi
(a_v(\gamma)) = v$ and $\pi ( b_Z(\gamma)) = q_Z$ for $v\in E^0$ and nonempty finite
subset $Z\subset s^{-1}(u)$, where $u$ is an infinite emitter. $\pi$ is
$\Gamma$-equivariant in the sense that $\pi (\beta \cdot x)= \pi (x)$ for all $\beta \in
\Gamma $ and $x\in M^{\gr}_E$.

Recall the covering graph $\overline{E}$ from  \S\ref{covergraph}. Let
$L_{K}(\overline{E})\text{-}{\rm Mod}$ be the category of unital left
$L_{K}(\overline{E})$-modules and $L_{K}(E)\-\Gr$ the category of graded unital left
$L_{K}(E)$-modules. The isomorphism $\phi':L_{K}(\overline{E})\xrightarrow[]{\sim}
L_{K}(E)\#\G$ of Corollary~\ref{coriso} and Proposition~\ref{proposition} yield an
isomorphism of categories
\begin{equation}\label{isocat}
\Phi : L_{K}(E)\text{-}{\Gr} \longrightarrow L_{K}(\overline{E})\text{-}{\rm Mod}.
\end{equation}

\begin{lem} \label{lemmaimage}
Let $E$ be an arbitrary graph, $\G$ a group and $w:E^{1}\xra \G$ a function.
\begin{enumerate}
\item[(1)] Fix a path $\eta$ in $E$, and $\b\in \G$, and let $\overline{\eta}=\eta_{\b^{-1}}$ be the path in
    $\overline{E}$ defined at~\eqref{eq:pathlift}. Then
    $\Phi\left((L_{K}(E)\eta\eta^{*})(\b)\right)\cong L_{K}(
    \overline{E})\overline{\eta}\overline{\eta}^{*}$. In particular,
    $\Phi((L_{K}(E)v)(\b))\cong L_{K}(\overline{E})v_{\b^{-1}}$.
\item[(2)] Let $u \in E^0$ be an infinite emitter, and let $Z\subseteq s_{E}^{-1}(u)$
    be a nonempty finite set. Fix $\b \in \G$, and let $\overline{Z} = \{e_{\b^{-1}}
    \mid e \in Z\}$. Then $u_{\b^{-1}}$ is an infinite emitter in $\overline{E}$ and
    $\overline{Z}$ is a nonempty finite subset of
    $s^{-1}_{\overline{E}}(u_{\b^{-1}})$. Moreover, $\Phi(L_{K}(E)(u-\sum_{e\in
    Z}ee^{*})(\b))\cong
    L_{K}(\overline{E})(u_{\b^{-1}}-\sum_{f\in\overline{Z}}ff^{*})$.
\end{enumerate}
\end{lem}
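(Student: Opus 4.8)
The plan is to treat both parts uniformly, by observing that in each case the module whose image we compute has the form $(L_K(E)e_0)(\b)$ for a homogeneous idempotent $e_0 \in L_K(E)_\varepsilon$: in part~(1) we take $e_0 = \eta\eta^*$, and in part~(2) we take $e_0 = u - \sum_{e\in Z}ee^*$. That these are homogeneous idempotents of degree $\varepsilon$ is a direct check from the Leavitt relations (for part~(2) using the orthogonality $(ee^*)(ff^*) = \delta_{e,f}ee^*$). First I would unwind $\Phi$: by construction $\Phi = \mathrm{Res}_{\phi'}\c\psi$, where $\psi : L_K(E)\-\Gr \to (L_K(E)\#\G)\-\Mod$ is the functor of Proposition~\ref{proposition} and $\mathrm{Res}_{\phi'}$ is restriction of scalars along the isomorphism $\phi'$ of Corollary~\ref{coriso}; so for a graded module $N$, $\Phi(N)$ is the group $N''$ with $L_K(\overline E)$-action $a\cdot n = \phi'(a)\cdot n$.

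The key step is a single module identification. Writing $A = L_K(E)$, I claim that for any homogeneous idempotent $e_0 \in A_\varepsilon$ and any $\b\in\G$ there is an isomorphism of $A\#\G$-modules
\[
\psi\big((Ae_0)(\b)\big) \xrightarrow[]{\sim} (A\#\G)(e_0 p_{\b^{-1}}),\qquad ae_0 \mapsto ae_0 p_{\b^{-1}}.
\]
To prove this I would first note that in the shifted grading $e_0$ is homogeneous of degree $\b^{-1}$, since $(Ae_0)(\b)_\g = A_{\g\b}e_0$. Hence, by the definition of the $A\#\G$-action on $\psi((Ae_0)(\b)) = (Ae_0)(\b)''$, one has $(rp_\a)\c e_0 = \delta_{\a,\b^{-1}}\,re_0$; on the other side the multiplication rule $(rp_\a)(sp_\delta) = rs_{\a\delta^{-1}}p_\delta$ gives $(rp_\a)(e_0 p_{\b^{-1}}) = \delta_{\a,\b^{-1}}\,re_0 p_{\b^{-1}}$, so in particular $(A\#\G)(e_0 p_{\b^{-1}}) = Ae_0 p_{\b^{-1}}$. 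Matching these two computations shows the displayed map is a well-defined $A\#\G$-linear bijection. Restricting along the isomorphism $\phi'$ (which carries the cyclic module $(A\#\G)f$ to $L_K(\overline E)\phi'^{-1}(f)$) then yields
\[
\Phi\big((Ae_0)(\b)\big) \cong L_K(\overline E)\,\phi'^{-1}(e_0 p_{\b^{-1}}).
\]

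It then remains to identify $\phi'^{-1}(e_0 p_{\b^{-1}})$ in each case, which is where the concrete computation lives. For part~(1) I would show $\phi'(\overline\eta\,\overline\eta^{\,*}) = \eta\eta^* p_{\b^{-1}}$ by induction on $|\eta|$: writing $\eta = f^1\cdots f^n$ and using $\phi'(e_\a) = ep_{w(e)^{-1}\a}$, $\phi'(e^*_\a) = e^* p_\a$ from Corollary~\ref{coriso} together with the lift~\eqref{eq:pathlift}, a telescoping application of the rule $(rp_\a)(sp_\delta) = rs_{\a\delta^{-1}}p_\delta$ collapses the intermediate degrees and gives $\phi'(\overline\eta) = \eta p_{w(\eta)^{-1}\b^{-1}}$ and $\phi'(\overline\eta^{\,*}) = \eta^* p_{\b^{-1}}$, whence their product is $\eta\eta^* p_{\b^{-1}}$. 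Applying $\phi'^{-1}$ and combining with the previous paragraph gives part~(1), and the ``in particular'' statement is the case $\eta = v$ (where $\eta\eta^* = v$ and $\overline\eta\,\overline\eta^{\,*} = v_{\b^{-1}}$).

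For part~(2) I would first record the immediate identity $s_{\overline E}^{-1}(u_{\b^{-1}}) = \{e_{\b^{-1}} : e\in s_E^{-1}(u)\}$, which shows $u_{\b^{-1}}$ is an infinite emitter and $\overline Z$ a nonempty finite subset of $s_{\overline E}^{-1}(u_{\b^{-1}})$. Then, using additivity of $\phi'$ together with $\phi'(u_{\b^{-1}}) = up_{\b^{-1}}$ and the single-edge instance $\phi'(e_{\b^{-1}}(e_{\b^{-1}})^*) = ee^* p_{\b^{-1}}$ of the computation above, I get
\[
\phi'\Big(u_{\b^{-1}} - \sum_{f\in\overline Z}ff^*\Big) = up_{\b^{-1}} - \sum_{e\in Z}ee^* p_{\b^{-1}} = e_0 p_{\b^{-1}},
\]
so $\phi'^{-1}(e_0 p_{\b^{-1}}) = u_{\b^{-1}} - \sum_{f\in\overline Z}ff^*$, completing part~(2). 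I expect the only real obstacle to be bookkeeping: keeping the direction of the shift correct --- it is $p_{\b^{-1}}$, not $p_\b$, that appears, because shifting by $\b$ moves degree $\varepsilon$ to degree $\b^{-1}$ --- and executing the inductive identity $\phi'(\overline\eta\,\overline\eta^{\,*}) = \eta\eta^* p_{\b^{-1}}$ without slips in the exponents $w(f^1\cdots f^i)^{-1}\b^{-1}$. Once that degree is pinned down, the rest is formal.
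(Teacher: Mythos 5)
Your proposal is correct and follows essentially the same route as the paper: the core of both arguments is the map $y\mapsto yp_{\b^{-1}}$ identifying $\Phi((L_K(E)e_0)(\b))$ with the cyclic module on $e_0p_{\b^{-1}}$, combined with the identity $\phi'(\overline\eta\,\overline\eta^{\,*})=\eta\eta^*p_{\b^{-1}}$ coming from Corollary~\ref{coriso} (which the paper leaves implicit and you spell out by induction). Your uniform treatment of (1) and (2) via a homogeneous idempotent $e_0$ of degree $\varepsilon$ is a tidy packaging of the paper's ``the proof for (2) is similar,'' not a genuinely different argument.
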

\begin{proof}
We prove (1). By the isomorphism of algebras in Corollary \ref{coriso}, we have
\begin{equation*}
\begin{split}
L_{K}(\overline{E})\overline{\eta}\overline{\eta}^{*} &\cong (L_{K}(E)\#\G)\eta\eta^{*}
p_{\b^{-1}}.
\end{split}
\end{equation*}
We claim that $f: \Phi((L_{K}(E)\eta\eta^{*})(\b))\longrightarrow
(L_{K}(E)\#\G)\eta\eta^{*} p_{\b^{-1}}$ given by $f(y)=yp_{\b^{-1}}$ is an isomorphism of
left $L_{K}(\overline{E})$-modules. It is clearly a group isomorphism. To see that it is
an $L_{K}(\overline{E})$-module morphism, note that $(rp_{\g})y=ry_{\g}$ for $y\in
(L_{K}(E)\eta\eta^{*})(\b)$ and $y_{\g}$ a homogeneous element of degree $\g$. We have
$y\in L_{K}(E)_{\g\b}\eta\eta^{*}$, yielding $f((rp_{\g})y)=ry_{\g}p_{\b-1} =
(rp_{\g})(yp_{\b^{-1}})=rp_{\g}f(y)$. The proof for (2) is similar.
\end{proof}

Recall from \S\ref{smashp} that there is a shift functor $\widetilde{\mathcal{S}}_{\a}$
on $L_{K}(E)\#\G\-\Mod$ for each $\a\in\G$. So the isomorphism
$\phi':L_{K}(\overline{E})\xrightarrow[]{\sim}L_{K}(E)\#\G$ of Corollary \ref{coriso}
yields a shift functor $\TT_\a$ on $L_{K}(\overline{E})\-\Mod$. This in turn induces a
homomorphism $\mathcal{T}_{\a}:\VV(L_{K}(\overline{E}))\xra \VV(L_{K}(\overline{E}))$,
giving a $\G$-action on the monoid $\VV(L_{K}(\overline{E}))$.

Fix $v_{\g}\in \overline{E}^{0}$, an infinite emitter $u_\b \in \overline{E}^0$, and a
finite $Z \subseteq s^{-1}_{\overline{E}}(u_\b)$. Write $Z\cdot\a^{-1} = \{e_{\b\a^{-1}}
\mid e_\b \in Z\}$. We claim that
\begin{equation}
\label{shift}
\mathcal{T}_{\a}([L_{K}(\overline{E})v_{\g}])=[L_{K}(\overline{E})v_{\g\a^{-1}}]
    \qquad\text{ and }\qquad
\mathcal{T}_{\a}([L_{K}(\overline{E})(u_{\b}-\sum_{e\in Z}ee^{*})])=[L_{K}(\overline{E})(u_{\b\a^{-1}}-\sum_{f\in Z\cdot a^{-1}}ff^{*})].
\end{equation}

To see the first equality in~\eqref{shift}, we use Lemma~\ref{lemmaimage} to see that
\begin{equation*}
\Phi( L_{K}(E)v(\g^{-1}))=L_{K}(\overline{E})v_{\g}
    \qquad\text{ and }\qquad
\Phi( L_{K}(E)v(\a\g^{-1}))=L_{K}(\overline{E})v_{\g\a^{-1}}.
\end{equation*}
Using the commutative diagram~\eqref{inducedshift} at the second equality, we see that
\begin{equation*}
\TT_{\a}(L_{K}(\overline{E})v_{\g})
    =(\TT_{\a}\c \Phi)( L_{K}(E)v(\g^{-1}))
    =(\Phi\c\TT_{\a})( L_{K}(E)v(\g^{-1}))
    =\Phi( L_{K}(E)v(\a\g^{-1}))
    =L_{K}(\overline{E})v_{\g\a^{-1}}.
\end{equation*}
The proof for the second equality in \eqref{shift} is similar.

The group $\G$ acts on the monoid $M_{\overline{E}}$  as follows. Again fix $v_{\g}\in
\overline{E}^{0}$, an infinite emitter $u_\b \in \overline{E}^0$, and a finite $Z
\subseteq s^{-1}_{\overline{E}}(u_\b)$, and write $Z\cdot\a^{-1} = \{e_{\b\a^{-1}} \mid
e_\b \in Z\}$. Then
\begin{equation} \label{groupaction2}
\a\cdot v_{\g}=v_{\g\a^{-1}}
    \qquad\text{ and }\qquad
\a\cdot q_{Z}=q_{Z\cdot\a^{-1}}.
\end{equation}

\begin{prop} \label{propformonoid}
Let $E$ be an arbitrary graph, $K$ a field, $\G$ a group and $w:E^{1}\xra \G$ a function.
Let $\mathcal{A}=L_{K}(E)$ and $\mathcal{\overline{A}}=L_{K}(\overline{E})$. Then  the
monoid $\mathcal{V}^{\gr}(\mathcal{A})$ is generated by $[\mathcal{A}v(\a)]$ and
$[\AA(u-\sum_{e\in Z}ee^{*})(\b)] $, where $v\in E^{0}, \a,\b\in \G$ and $Z$ runs through
all nonempty finite subsets of $s^{-1}(u)$ for infinite emitters $u\in E^{0}$. Given an
infinite emitter $u \in E^0$, a finite nonempty set $Z \subseteq s^{-1}(u)$, and $\b\in
\G$, write $Z_{\b^{-1}} := \{e_{\b^{-1}} : e \in Z\} \subseteq s^{-1}_{\overline{E}}(u_{\b^{-1}})$. Then there
are $\G$-module isomorphisms
\begin{equation}
\label{isomonoid}
\mathcal{V}^{\gr}(\mathcal{A}) \cong \mathcal{V}(\mathcal{\overline{A}}) \cong M_{\overline{E}} \cong M_{E}^{\gr},
\end{equation}
that satisfy
\begin{equation*}
[\mathcal{A}v(\a)] \mapsto [\mathcal{\overline{A}}v_{\a^{-1}}]\mapsto[v_{\a^{-1}}]\mapsto[a_{v}({\a})]
\end{equation*}
for all $v \in E^0$ and $\a \in \G$, and
\begin{equation*}
[\AA(u-\sum_{e\in Z}ee^{*})(\b)]
    \mapsto [\mathcal{\overline{A}}(u_{\b^{-1}} - \sum_{\overline{e}\in Z_{\b^{-1}}}\overline{e}\overline{e}^{*})]\mapsto[q_{Z_{\b^{-1}}}]\mapsto[b_{Z}({\b})]
\end{equation*}
for every infinite emitter $u$, finite nonempty $Z \subseteq s^{-1}(u)$, and $\b \in \G$.
\end{prop}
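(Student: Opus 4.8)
The plan is to build the chain \eqref{isomonoid} one isomorphism at a time, tracking the images of the generating classes and checking $\G$-equivariance, and then to read off the generating statement of the proposition as a by-product. For the leftmost isomorphism I would use that the functor $\Phi$ of \eqref{isocat}, assembled from Corollary~\ref{coriso} and Proposition~\ref{proposition}, is an \emph{isomorphism} (not merely an equivalence) of categories; it therefore preserves the categorical notions of being finitely generated and projective, and so restricts to an additive bijection between the finitely generated graded projective objects of $L_K(E)\-\Gr$ and the finitely generated projective objects of $L_K(\overline E)\-\Mod$. Since both $L_K(E)$ and $L_K(\overline E)$ have graded local units, the descriptions \eqref{grlocal} and \eqref{monoididempotent} apply, and $\Phi$ induces a monoid isomorphism $\VV^{\gr}(\mathcal A)\xrightarrow{\sim}\VV(\overline{\mathcal A})$. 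Lemma~\ref{lemmaimage} evaluates this map on the classes of interest, sending $[\mathcal A v(\b)]$ to $[\overline{\mathcal A}v_{\b^{-1}}]$ and $[\mathcal A(u-\sum_{e\in Z}ee^*)(\b)]$ to $[\overline{\mathcal A}(u_{\b^{-1}}-\sum_{f\in Z_{\b^{-1}}}ff^*)]$.

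For the middle isomorphism $\VV(\overline{\mathcal A})\cong M_{\overline E}$ I would appeal to the computation of the graph monoid: as $\overline E$ is an arbitrary (possibly non-row-finite) graph, this is the non-separated case of \cite[Theorem~4.3]{ag}, which extends \cite[Theorem~3.5]{amp}. Under it a vertex class $[\overline{\mathcal A}w]$ corresponds to the generator $[w]$ of $M_{\overline E}$, and for an infinite emitter $u_\delta\in\overline E^0$ and finite nonempty $\overline Z\subseteq s^{-1}_{\overline E}(u_\delta)$ the class $[\overline{\mathcal A}(u_\delta-\sum_{f\in\overline Z}ff^*)]$ corresponds to $[q_{\overline Z}]$. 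Composing with the previous step already gives the generating statement of the proposition: $M_{\overline E}$ is generated by the classes of its vertices and the $q_{\overline Z}$, and these pull back along Lemma~\ref{lemmaimage} to the classes $[\mathcal A v(\a)]$ and $[\mathcal A(u-\sum_{e\in Z}ee^*)(\b)]$ (as $\b$ ranges over $\G$ so does $\b^{-1}$, so no generator is omitted).

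The rightmost isomorphism $M_{\overline E}\cong M_E^{\gr}$ I would obtain by comparing presentations. The covering-graph formulas of \S\ref{covergraph} give $s^{-1}_{\overline E}(v_\a)=\{e_\a:e\in s^{-1}_E(v)\}$ and $r(e_\a)=r(e)_{w(e)^{-1}\a}$, so $v_\a$ is regular, a sink, or an infinite emitter exactly when $v$ is; under the bijection of generators matching $v_\a$ with a generator $a_v(\cdot)$ and $q_{\overline Z}$ with a generator $b_Z(\cdot)$, each defining relation of $M_{\overline E}$ becomes, term by term, an instance of the corresponding relation (1), (2) or (3) of $M_E^{\gr}$. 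The precise labelling of the $\G$-indices is dictated by the two previous steps, so that the full composite carries $[\mathcal A v(\a)]$ to $[a_v(\a)]$ and $[\mathcal A(u-\sum_{e\in Z}ee^*)(\b)]$ to $[b_Z(\b)]$, as displayed.

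It remains to check $\G$-equivariance throughout. The action on $\VV(\overline{\mathcal A})$ is by the shift functors $\TT_\a$ transported along $\phi'$, and \eqref{shift} records their effect on the relevant classes; this coincides verbatim with the action \eqref{groupaction2} on $M_{\overline E}$, so the middle map is equivariant. That the shift functors $\TT_\a$ on $L_K(E)\-\Gr$ correspond under $\Phi$ to the shift action on $\VV(\overline{\mathcal A})$ is exactly the commuting square \eqref{inducedshift}, giving equivariance of the leftmost map; equivariance of the rightmost map against \eqref{groupaction1} is then forced and is checked on generators. I expect the bookkeeping here to be the main obstacle: the three actions are written in different conventions (left translation on $M_E^{\gr}$ in \eqref{groupaction1}, right translation by inverses on $M_{\overline E}$ in \eqref{groupaction2}, and the grading shift on modules), so one must track with care where the weights $w(e)$ and the inversions $\b\mapsto\b^{-1}$ enter in order to make the generator labels in \eqref{isomonoid} consistent simultaneously across all four monoids and all three $\G$-actions.
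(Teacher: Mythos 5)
Your proposal reproduces the paper's argument essentially step for step: the same chain \eqref{isomonoid} built from the category isomorphism $\Phi$ of \eqref{isocat} together with Lemma~\ref{lemmaimage}, the non-separated case of \cite[Theorem~4.3]{ag} for $\VV(\overline{\mathcal{A}})\cong M_{\overline{E}}$, a comparison of presentations for $M_{\overline{E}}\cong M_{E}^{\gr}$, and $\G$-equivariance read off from \eqref{inducedshift}, \eqref{shift}, \eqref{groupaction1} and \eqref{groupaction2}. The one point where you diverge is that you assert $\Phi$ preserves finite generation because it is an isomorphism of categories, whereas the paper verifies this concretely by computing the $\overline{\mathcal{A}}$-action on the homogeneous generators of $\Phi(P)$ (its equations for $v_\g x_i$, $e_\g x_i$, $e^*_\g x_i$); your shortcut is legitimate, but strictly speaking it needs the remark that finite generation is a categorical property (compactness with respect to directed unions of subobjects) in both $\mathcal{A}\text{-}\Gr$ and $\overline{\mathcal{A}}\text{-}\Mod$.
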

\begin{proof}
Let $P$ be a graded finitely generated projective left $\mathcal{A}$-module. We claim
that the isomorphism $\Phi:\mathcal{A}\text{-}{\Gr}\xra
\mathcal{\overline{A}}\text{-}{\rm Mod}$ in \eqref{isocat} preserves the finitely
generated projective objects. Since $\Phi$ is an isomorphism of categories, $\Phi(P)$ is
projective. Observe that $P$ has finite number of homogeneous generators $x_{1}, \cdots,
x_{n}$ of degree $\g_{i}$. By the $\overline{\AA}$-action of $\Phi(P)$, we have the
following equalities:
\begin{enumerate}
\item[(1)] if $v\in E^{0}, \g\in \G$, then
\begin{equation} \label{fg1}
    v_{\g}x_{i}=vp_{\g}x_{i}=
\begin{cases}vx_{i}, & \text{if~} \g_{i}=\g;\\
0, &\text{otherwise;}
\end{cases}
\end{equation}
\item[(2)] if $e:u\xra v\in E^{1}, w(e)=\b$ and $\g\in \G$, then
\begin{equation}
    \label{fg2}
 e_{\g}x_{i}=ep_{\b^{-1}\g}x_{i}=
\begin{cases}ex_{i}, & \text{if~} \g_{i}=\b^{-1}\g;\\
0, &\text{otherwise; and}
\end{cases}
\end{equation}
\item[(3)] if $ e:u\xra v\in E^{1}, w(e)=\b$ and $\g\in \G$, then
\begin{equation}
    \label{fg3}
 e^{*}_{\g}x_{i}=e^{*}p_{\g}x_{i}=\begin{cases}e^{*}x_{i}, & \text{if~} \g_{i}=\g;\\
0, &\text{otherwise.}
\end{cases}
\end{equation}
\end{enumerate}
So for $y\in \Phi(P)$, we can express $y=\sum_{i=1}^{n}r_{i}x_{i}$ for some $r_{i}\in
\AA$. Fix $i \le n$ and paths $\eta, \tau$ in $E$ satisfying $r(\eta)=r(\tau)$.
Then~\eqref{fg1},~\eqref{fg2}, and~\eqref{fg3} give
\begin{equation} \label{equality}
\tau\eta^{*}x_{i}=\tau_{w(\tau)w(\eta)^{-1}\g_i} (\eta_{\g_i})^{*}x_{i}.
\end{equation}
Since $y=\sum_{i=1}^{n}r_{i}x_{i}=\sum_{i=1}^{n}\sum_{h\in \G}r_{i, h}x_{i}$ with $r_{i,
h}$ a homogeneous element of degree $h$, equation~\eqref{equality} gives $y\in
\overline{\AA}(\Phi(P))$. Thus $\Phi(P)$ is a finitely generated projective
$\mathcal{\overline{A}}$-module.

By \eqref{monoididempotent} and \eqref{grlocal}, there exists a homomorphism
$\VV^{\gr}(\AA)\xra \VV(\overline{\AA})$ sending $[P]$ to $[\Phi(P)]$ for a graded
finitely generated projective left $\AA$-module $P$. Applying \cite[Theorem 4.3]{ag} for
the non-separated case, we obtain the second monoid isomorphism
$\mathcal{V}(\mathcal{\overline{A}})\xrightarrow[]{\sim} M_{\overline{E}}$ in
\eqref{isomonoid}. Then for each graded finitely generated projective left $\AA$-module
$P$, the module $\Phi(P)$ in $\mathcal{\overline{A}}\text{-}{\rm Mod}$ is generated by
the elements $\mathcal{\overline{A}}v_{\a}$ and $\mathcal{\overline{A}}(u_{\b}-\sum_{e\in
Z'}ee^{*})$ that it contains. Combining this with Lemma~\ref{lemmaimage} gives the first
isomorphism of monoids. The last monoid isomorphism $M_{\overline{E}}\cong M_{E}^{\gr}$
follows directly by their definitions. By~\eqref{groupaction1},~\eqref{shift}
and~\eqref{groupaction2}, the monoid isomorphisms in~\eqref{isomonoid} are $\G$-module
isomorphisms.
\end{proof}

Recall the following classification conjecture~\cite{abramss,ap2014,haz2013}. Let $E$ and
$F$ be finite graphs. Then there is an order preserving $\mathbb Z[x,x^{-1}]$-module
isomorphism $\phi: K_0^{\gr}(L_K(E)) \rightarrow K_0^{\gr}(L_K(F))$ if and only if
$L_K(E)$ is graded Morita equivalent to $L_K(F)$. Furthermore, if
$\phi([L_K(E)]=[L_K(F)]$ then $L_K(E)\cong_{\gr} L_K(F).$

Note that $K_0(L_K(E))$ and $K^{\gr}_0(L_K(E))$ are the group completions of
$\VV(L_K(E))$ and $\VV^{\gr}(L_K(E))$, respectively.  Let  $\G=\Z$ and let $w:E^1
\rightarrow \Z$ be the function assigning $1$ to each edge. Then
Proposition~\ref{propformonoid} implies that there is an order preserving $\mathbb
Z[x,x^{-1}]$-module  isomorphism $K_{0}^{\gr}(L_{K}(E))\cong K_{0}(L_{K}(\overline{E}))$,
thus relating the study of a Leavitt path algebra over an arbitrary graph to the case of
acyclic graphs (see Example~\ref{onetwo3}).

The following corollary is the first evidence that $K^{\gr}_0(L_K(E))$ preserves all the
information of the graded monoid.

\begin{cor}\label{corinj}
Let $E$ be an arbitrary graph. Consider $L_K(E)$ as a graded ring with the grading
determined by the function $w : E^1 \to \Z$ such that $w(e) = 1$ for all $e$. Then
$\VV^{\gr}(L_{K}(E))$ is cancellative.
\end{cor}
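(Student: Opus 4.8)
The plan is to transport the statement across the chain of monoid isomorphisms provided by Proposition~\ref{propformonoid} and then to apply the structural criterion of Lemma~\ref{iffcancellative}. First I would invoke Proposition~\ref{propformonoid} to obtain the monoid isomorphism $\VV^{\gr}(L_K(E)) \cong M_{\overline{E}}$, where $\overline{E}$ is the covering graph of $E$ with respect to the weight $w : E^1 \to \Z$ given by $w(e)=1$. Since cancellativity is preserved under monoid isomorphisms, it then suffices to prove that $M_{\overline{E}}$ is cancellative.

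To do this I would reduce, via Lemma~\ref{iffcancellative}, to checking that $\overline{E}$ is acyclic. The key observation is that for this particular weight the covering graph has a rigid layered structure: by the description recalled in Example~\ref{onetwo3}, each edge $e_n \in \overline{E}^1$ satisfies $s(e_n) = s(e)_n$ and $r(e_n) = r(e)_{n-1}$, so every edge of $\overline{E}$ lowers the $\Z$-label of its endpoints by exactly one. Assigning to each vertex $v_n$ the integer $n$ thus gives a function on $\overline{E}^0$ that strictly decreases along every edge, hence along every nontrivial path. In particular no path can be closed, so $\overline{E}$ is acyclic.

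Lemma~\ref{iffcancellative} then applies to $\overline{E}$ and shows that $M_{\overline{E}}$ is cancellative; combined with the isomorphism $\VV^{\gr}(L_K(E)) \cong M_{\overline{E}}$ this yields the corollary. I do not expect a genuine obstacle here, since all the real content is already packaged in Proposition~\ref{propformonoid} and Lemma~\ref{iffcancellative}; the only thing one must verify by hand is that the specific choice $w \equiv 1$ forces $\overline{E}$ to be acyclic, which is precisely the layered-level observation above.
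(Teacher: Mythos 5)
Your proposal is correct and follows essentially the same route as the paper: both pass through Proposition~\ref{propformonoid} to identify $\VV^{\gr}(L_K(E))$ with $M_{\overline{E}}$ and then apply Lemma~\ref{iffcancellative} after noting that $\overline{E}$ is acyclic. The only difference is that you spell out the acyclicity of $\overline{E}$ via the strictly decreasing $\Z$-labelling of vertices along edges, which the paper simply asserts; this is a harmless and welcome elaboration.
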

\begin{proof}
By Proposition \ref{propformonoid}, we have $\VV^{\rm gr}(L_{K}(E))\cong
M_{\overline{E}}$. Since $\overline{E}=E\times \Z$ is an acyclic graph, the monoid
$M_{\overline{E}}$ is cancellative by Lemma~\ref{iffcancellative}. Hence $\VV^{\rm
gr}(L_{K}(E))$ is cancellative.
\end{proof}

For the next result we need to recall the notion of order-ideals of a monoid. An
\emph{order-ideal} of a monoid $M$ is a submonoid $I$ of $M$ such that $x + y \in I$
implies $x,y \in I$. Equivalently, an order-ideal is a submonoid $I$ of $M$ that is
hereditary in the sense that $x \le y$ and $y \in I$ implies $x \in I$. The set
$\mathcal{L}(M)$ of order-ideals of $M$ forms a (complete) lattice (see \cite[\S
5]{amp}). Given a subgroup $I$ of $K_0^{\gr}(A)$, we write $I^{+}=I\cap
K_0^{\gr}(A)^{+}$. We say that $I$ is a {\it graded ordered ideal} if $I$ is closed under
the action of $\mathbb Z[x,x^{-1}]$, $I=I^{+}-I^{+}$, and $I^{+}$ is an order-ideal.

Let $E$ be a graph. Recall that a subset $H \subseteq E^0$ is said to be hereditary if
for any $e \in E^1$ we have that $s(e)\in H$ implies $r(e)\in H$. A hereditary subset $H
\subseteq E^0$ is called saturated if whenever $0 < |s^{-1}(v)| < \infty$, then $\{r(e):
e\in E^1 \text{~and~} s(e)=v\}\subseteq H$ implies $v\in H$. If $H$ is a hereditary
subset, a breaking vertex of $H$ is a vertex $v \in E^0 \setminus H$ such that
$|s^{-1}(v)| = \infty$ but $0 < |s^{-1}(v) \setminus r^{-1}(H)| < \infty$. We write $B_H
:= \{v \in E^0\setminus H \mid v \text{ is a breaking vertex of } H\}$. We call $(H,S)$
an \emph{admissible pair} in $E^{0}$ if $H$ is a saturated hereditary subset of $E^0$ and
$S\subseteq B_H$.

Let $E$ be a row-finite graph. Isomorphisms between the lattice of saturated hereditary
subsets of $E^{0}$, the lattice $\mathcal{L}(M_{E})$, and the lattice of graded ideals of
$L_K(E)$ were established in \cite[Theorem 5.3]{amp}. Tomforde used the admissible pairs
$(H, S)$ of vertices to parameterise the graded ideals of $L_{K}(E)$ for a graph $E$
which is not row-finite (see \cite[Theorem 5.7]{tomforde}). In analogy, Ara and Goodearl
\cite{ag} proved that the lattice of those ideals of Cohn-Leavitt algebras $CL_{K}(E, C,
S)$ generated by idempotents is isomorphic to a certain lattice $\mathcal{A}_{C, S}$ of
admissible pairs $(H, G)$, where $H\subseteq E^0$ and $G\subseteq C$ (see
\cite[Definition 6.5]{ag} for the precise definition). There is also a lattice
isomorphism between $\mathcal{A}_{C, S}$ and the lattice $\mathcal{L}(M(E, C, S))$ of
order-ideals of $M(E, C, S)$. Specialising to the non-separated graph $E$, there is a
lattice isomorphism
\begin{equation}\label{latticeiso}
\mathcal{H} \cong \mathcal{L}(M_{E})
\end{equation}
between the lattice $\mathcal{H}$ of admissible pairs $(H, S)$ of $E^0$ and the lattice
$\mathcal{L}(M_{E})$ of order-ideals of the monoid $M_{E}$.

Let $E$ be a finite graph with no sinks. There is a one-to-one correspondence
\cite[Theorem 12]{roozbehhazrat2013} between the set of hereditary and saturated subsets
of $E^0$ and the set of graded ordered ideals of $K^{\rm gr}_0(L_K(E))$. The main theorem
of this section describes a one-to-one correspondence between the set of admissible pairs
$(H, S)$ of vertices and the set of graded ordered ideals of $K_0^{\gr}(L_{K}(E))$ for an
arbitrary graph $E$. To prove it, we first need to extend \cite[Lemma 4.3]{amp} to
arbitrary graphs. This may also be useful in other situations.

\begin{lem}\label{lem:confluence-general}
Let $E$ be an arbitrary graph and denote by $F$ the free abelian group generated by
$E^0\cup \{q_Z\}$, where $Z$ ranges over all the nonempty finite subsets of $s^{-1}(v)$
for infinite emitters $v$. Let $\sim$ be the congruence on $F$ such that $F/{\sim} =
M_E$. Let $\to_1$ be the relation on $F$ defined by $v+\alpha \to_1 \sum _{e\in
s^{-1}(v)} r(e) + \alpha$ if $v$ is a regular vertex in $E$, $v+\alpha \to_1 r(z) + q_{\{
z\}} + \alpha $ if $v\in E^0$ is an infinite emitter and $z\in s^{-1}(v)$, and also
$q_{Z}+\alpha  \to_1 r(z) + q_{Z\cup \{z\}}+ \alpha $, if $Z$ is a non-empty finite
subset of $s^{-1}(v)$ for an infinite emitter $v$ and $z\in s^{-1}(v)\setminus Z$. Let
$\to $ be the transitive and reflexive closure of $\to _1$. Then $\alpha \sim \beta $ in
$F$ if and only if there is $\gamma \in F$ such that $\alpha \to \gamma $ and $\beta \to
\gamma $.
\end{lem}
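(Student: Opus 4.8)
The plan is to treat $\to$ as an abstract rewriting system on $F$ and to deduce the Church--Rosser property from confluence. One direction is immediate: each single step is an instance of a defining relation of $M_E$ --- the step $v \to_1 \sum_{e\in s^{-1}(v)} r(e)$ is relation~(1), the step $v \to_1 r(z)+q_{\{z\}}$ is relation~(2) with $Z=\{z\}$, and the step $q_Z \to_1 r(z)+q_{Z\cup\{z\}}$ is relation~(3) with $Z_1=Z$ and $Z_2=Z\cup\{z\}$ --- so $\alpha \to_1 \beta$ implies $\alpha \sim \beta$. Hence if $\alpha \to \gamma$ and $\beta \to \gamma$ then $\alpha \sim \gamma \sim \beta$. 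Chaining single steps one edge at a time also realises the general forms of relations~(2) and~(3) inside $\to$, so $\sim$ is exactly the congruence generated by $\to$; it therefore suffices to prove that $\to$ is confluent, after which the usual Church--Rosser argument forces a common descendant whenever $\alpha\sim\beta$.

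To prove confluence I would first verify that $\to_1$ has the (strong) diamond property: whenever $\gamma_1 \leftarrow_1 \alpha \to_1 \gamma_2$ there is $\delta$ with $\gamma_1 \to_1^{=}\delta$ and $\gamma_2 \to_1^{=}\delta$, where $\to_1^{=}$ permits the trivial step. Since $F$ is free abelian on $E^0 \cup \{q_Z\}$, a one-step rewrite replaces a single generator occurrence and fixes an additive remainder $\alpha$, and each generator admits at most one applicable rule (regular vertices the first, infinite emitters the second, symbols $q_Z$ the third, while sinks are terminal). If the two rewrites act on distinct occurrences they commute, so applying each to the other's output closes the square; if they act on the same occurrence of a regular vertex the outputs already coincide. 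The only genuinely nontrivial peaks occur when the same infinite emitter $v$, or the same symbol $q_Z$, is expanded along two different edges $z\neq z'$ of $s^{-1}(v)$.

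For such a peak at an infinite emitter we have $\gamma_1 = r(z)+q_{\{z\}}+\alpha$ and $\gamma_2 = r(z')+q_{\{z'\}}+\alpha$; expanding $q_{\{z\}}$ along $z'$ and $q_{\{z'\}}$ along $z$ sends both to $\delta = r(z)+r(z')+q_{\{z,z'\}}+\alpha$. For a peak at $q_Z$ we have $\gamma_1 = r(z)+q_{Z\cup\{z\}}+\alpha$ and $\gamma_2 = r(z')+q_{Z\cup\{z'\}}+\alpha$; since $z'\notin Z\cup\{z\}$ and $z\notin Z\cup\{z'\}$ the third rule still applies, and expanding $q_{Z\cup\{z\}}$ along $z'$ and $q_{Z\cup\{z'\}}$ along $z$ sends both to $\delta = r(z)+r(z')+q_{Z\cup\{z,z'\}}+\alpha$. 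This gives the diamond property, and the standard fact that a relation with the diamond property has confluent reflexive-transitive closure then yields confluence of $\to$.

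Finally I would assemble these steps: given $\alpha\sim\beta$, write the equivalence as a finite zig-zag of $\to_1$-steps and reverse steps and induct on its length, using confluence to collapse each peak into a valley until the zig-zag reduces to $\alpha \to \gamma \leftarrow \beta$ for a common $\gamma$. The main obstacle is precisely the diamond analysis for the $q_Z$-symbols: unlike the row-finite setting of \cite[Lemma~4.3]{amp}, where only the regular-vertex rule occurs, here one must check that the two freshly created symbols $q_{\{z\}}$ (resp.\ $q_{Z\cup\{z\}}$) can always be re-expanded along the competing edge, which is exactly why the rules are designed to adjoin a single edge at each step.
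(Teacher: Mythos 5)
Your proof is correct, but it takes a genuinely different route from the paper. The paper does not analyse the rewriting system directly: it invokes the direct-limit description $M_E=\varinjlim M(E',C',T')$ over finite complete subgraphs $E'$ from \cite{ag2}, uses \cite[Construction 5.3]{ag} to realise each $M(E',C',T')$ as $M_{\widetilde{E}}$ for a \emph{finite} graph $\widetilde{E}$ (adjoining a vertex $q_Z$ and an edge $v\to q_Z$ for each $Z\in C'\setminus T'$), applies the known confluence lemma \cite[Lemma 4.3]{amp} there, and then observes that the resulting common descendant lifts back to $F$. That argument is short but leans on two external results and on the implicit matching of the $\to_1$-steps under the identification. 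Your argument is self-contained: you verify the one direction by noting each $\to_1$-step is an instance of a defining relation (and that chaining single-edge steps recovers the general relations (2) and (3), so $\sim$ really is the equivalence generated by $\to_1$), and for the converse you run a critical-pair analysis. Your identification of the only nontrivial peaks --- the same infinite emitter $v$ or the same symbol $q_Z$ expanded along two different edges $z\neq z'$ --- and their one-step resolutions through $q_{\{z,z'\}}$ and $q_{Z\cup\{z,z'\}}$ is exactly right, and it is important (and correctly handled) that you use strong confluence rather than Newman's lemma, since the system is non-terminating whenever $E$ has cycles or infinite emitters. One cosmetic caveat: as in the paper's own statement, $F$ should be the free abelian \emph{monoid} (not group) on $E^0\cup\{q_Z\}$, and your multiset-of-generators analysis implicitly and correctly works at that level. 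Both proofs are valid; yours trades the citations for an explicit diamond check, which also makes transparent why the relations were designed to adjoin one edge at a time.
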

\begin{proof}
As in \cite[Alternative proof of Theorem 4.1]{ag2}, we write $M_E= \lim M(E',C', T')$,
where $E'$ ranges over all the finite complete subgraphs of $E$ and
$$C' = \{ s_{E'}^{-1}(v) \mid v\in (E')^0, \, \,  |s^{-1}_{E'}(v) | >0 \} ,\qquad
T' = \{ s_{E'}^{-1}(v)\in C' \mid v\in (E')^0, \, \, 0 < |s^{-1}_{E} (v) | <\infty \}.$$
Applying \cite[Construction 5.3]{ag}, we get that $M(E',C',T') = M_{\widetilde{E}}$ for
some finite graph $\widetilde{E}$. The vertices of $\widetilde{E}$ are the vertices of
$E$ and the elements of the form $q_Z$, where $Z\in C' \setminus T'$, and there is a new
edge $e_Z : v\to q_Z$ if the source of $Z$ is $v$.  If $\alpha \sim \beta $ in $F$, then
$[\alpha ]= [\beta]$ in $M_E$, and so there is $(E',C',T')$ as above such that $[\alpha]
= [\beta ]$ in $M(E',C',T')$. But since $M(E',C',T')= M_{\widetilde{E}}$, and
$\widetilde{E}$ is finite, we conclude from \cite[Lemma 4.3]{amp} that there is an
element $\gamma $ in the free monoid on $(E')^0 \cup \{q_Z\mid Z\in C'\setminus T' \}$
such that $\alpha \to \gamma $ and $\beta \to \gamma $. This implies that $\alpha \to
\gamma$ and $\beta \to \gamma $ in $F$.
\end{proof}

\begin{lem}
\label{lem:lattices-of-ideals} Let $E$ be an arbitrary graph and $K$ a field. Consider
$L_K(E)$ as a graded ring with the grading determined by the function $w : E^1 \to \Z$
such that $w(e) = 1$ for all $e$. Let $\mathcal{L}^{c}(M_{E}^{\rm gr})$ be the set of
order-ideals of $M^{\rm gr}_{E}$ which are closed under the $\mathbb{Z}$-action. Let $\pi
\colon M_E^{\gr} \to M_E$ be the canonical surjective homomorphism. Then the map $\phi
\colon \mathcal L (M_E) \to \mathcal L^c(M^{\gr}_E)$ defined by $\phi (I) = \pi^{-1}(I)$
is a lattice isomorphism.
\end{lem}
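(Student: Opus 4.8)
The plan is to prove that $\phi$ is a bijection that, together with its inverse, preserves inclusion; since an order isomorphism of lattices automatically preserves meets and joins, this yields the lattice isomorphism. First I would check that $\phi$ is well-defined, i.e. that $\pi^{-1}(I)\in\mathcal L^c(M^{\gr}_E)$ for every $I\in\mathcal L(M_E)$. It is a submonoid because $\pi$ is a homomorphism; it is an order-ideal because $x+y\in\pi^{-1}(I)$ gives $\pi(x)+\pi(y)=\pi(x+y)\in I$, whence $\pi(x),\pi(y)\in I$; and it is closed under the $\Z$-action because $\pi(\b\cdot x)=\pi(x)$ for all $\b\in\Z$ (the $\G$-equivariance of $\pi$ noted just after its definition). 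Next, since $\pi$ is surjective we have $\pi(\pi^{-1}(I))=I$, so $I\mapsto\pi^{-1}(I)$ is injective and reflects inclusions: $\pi^{-1}(I)\subseteq\pi^{-1}(J)$ forces $I=\pi(\pi^{-1}(I))\subseteq\pi(\pi^{-1}(J))=J$. Thus $\phi$ is an order-embedding, and everything reduces to surjectivity.

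Surjectivity amounts to showing that every $L\in\mathcal L^c(M^{\gr}_E)$ is \emph{$\pi$-saturated}, i.e. $\pi^{-1}(\pi(L))=L$; for then $I:=\pi(L)$ is an order-ideal (given $a+b\in I$, lift to $x,y$ with $\pi(x)=a$, $\pi(y)=b$; then $x+y\in\pi^{-1}(I)=L$, so $x,y\in L$ and $a,b\in I$) and $\phi(I)=L$. The inclusion $L\subseteq\pi^{-1}(\pi(L))$ is automatic, so the content is the claim: \emph{if $\pi(x)=\pi(y)$ and $y\in L$, then $x\in L$}. To prove it I pass to presentations. Let $F$ be the free abelian monoid on $E^0\cup\{q_Z\}$ presenting $M_E$ (the monoid $F$ of Lemma~\ref{lem:confluence-general}) and $F^{\gr}$ the free abelian monoid on $\{a_v(\g)\}\cup\{b_Z(\g)\}$ presenting $M^{\gr}_E$, and let $\hat\pi:F^{\gr}\to F$ be the degree-forgetting homomorphism $a_v(\g)\mapsto v$, $b_Z(\g)\mapsto q_Z$, which covers $\pi$. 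The key observation is a lifting principle: each instance of the rewriting relation $\to_1$ of Lemma~\ref{lem:confluence-general} is the $\hat\pi$-image of an instance of a defining relation of $M^{\gr}_E$ (relation~(1) for a regular vertex, relation~(2) with $Z$ a singleton, relation~(3) with $Z_2=Z_1\cup\{z\}$). Consequently, given any $\to_1$-chain $\eta\to\gamma$ in $F$ and any $\tilde\eta\in F^{\gr}$ with $\hat\pi(\tilde\eta)=\eta$, one can rewrite $\tilde\eta$ step by step---at each stage applying the chosen defining relation to a graded generator lying over the generator rewritten downstairs---to obtain $\tilde\eta\to\cdots\to\tilde\gamma$ in $F^{\gr}$ with $\hat\pi(\tilde\gamma)=\gamma$ and $[\tilde\eta]=[\tilde\gamma]$ in $M^{\gr}_E$.

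To finish, choose representatives $\tilde x,\tilde y\in F^{\gr}$ of $x,y$. Since $\pi(x)=\pi(y)$, we have $\hat\pi(\tilde x)\sim\hat\pi(\tilde y)$ in $F$, so Lemma~\ref{lem:confluence-general} provides a common descendant $\gamma$ with $\hat\pi(\tilde x)\to\gamma$ and $\hat\pi(\tilde y)\to\gamma$. Lifting both chains gives $\tilde\gamma_x,\tilde\gamma_y\in F^{\gr}$ with $\hat\pi(\tilde\gamma_x)=\hat\pi(\tilde\gamma_y)=\gamma$ and $[\tilde\gamma_x]=x$, $[\tilde\gamma_y]=y$. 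Because $\tilde\gamma_x$ and $\tilde\gamma_y$ have the same image $\gamma$ in the free monoid $F$, their generators can be matched in pairs lying over a common base generator and differing only in their $\Z$-decoration; hence each generator of $\tilde\gamma_x$ is a $\Z$-translate of the matched generator of $\tilde\gamma_y$. Now I invoke the hypotheses on $L$ in turn: writing $[\tilde\gamma_y]=\sum_k[\tilde g^y_k]\in L$ and using that $L$ is hereditary yields $[\tilde g^y_k]\in L$ for each $k$; $\Z$-invariance then gives $[\tilde g^x_k]\in L$ for the matched generators; and closure under addition gives $x=[\tilde\gamma_x]=\sum_k[\tilde g^x_k]\in L$, as required.

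I expect the main obstacle to be the lifting principle and its consequence in the third paragraph: one must verify carefully that confluence downstairs transports to $F^{\gr}$ keeping the $\hat\pi$-image fixed, and that two lifts of the same $\gamma$ necessarily agree up to the $\Z$-action on individual generators. Once this is in place, the remaining points---well-definedness, the order-embedding property, and the deduction that $\pi(L)$ is an order-ideal---are routine consequences of the surjectivity and $\G$-equivariance of $\pi$.
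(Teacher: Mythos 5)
Your proposal is correct and follows essentially the same route as the paper's proof: reduce everything to showing that each $J\in\mathcal L^c(M^{\gr}_E)$ satisfies $\pi^{-1}(\pi(J))=J$, invoke Lemma~\ref{lem:confluence-general} to get a common descendant downstairs, lift the rewriting chains to $M^{\gr}_E$ so that $x$ and $y$ are expressed over the same base generators with possibly different $\Z$-decorations, and then combine heredity, $\Z$-invariance, and closure under addition. The lifting principle you isolate as the main obstacle is exactly the step the paper passes over with the phrase ``using the same changes\dots but lifted to $M^{\gr}_E$,'' so your write-up just makes that step explicit.
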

\begin{proof}
It is easy to show that the map $\phi$ is well-defined. The key to show the result is to prove the equality $\pi^{-1} (\pi (J))= J$ for any $J\in L^c(M^{\gr}_E)$.
The inclusion $J\subseteq \pi^{-1}(\pi (J))$ is obvious. To show the reverse inclusion $\pi^{-1}(\pi (J))\subseteq J$, denote by $F$ the free abelian group
on $E^0\cup \{ q_Z \}$, where $Z$ ranges over all the nonempty finite subsets of $s^{-1}(v)$ for infinite emitters $v$.
 Take $z\in \pi^{-1} (\pi (J))$. Then there is $y\in J$ such that $\pi (z) = \pi (y)$.
Now write
$$ z= \sum _i a_{v_i}(\gamma _i) + \sum _j b_{Z_j} (\lambda _j),\qquad y = \sum _i a_{v'_i}(\gamma _i')+ \sum _j b_{Z'_j}(\lambda_j') .$$
Then we have $\sum _i v_i + \sum _j q_{Z_j} = \pi (z) = \pi (y) = \sum _i v_i'+ \sum _j q_{Z_j'}$. By Lemma \ref{lem:confluence-general}, there is
$x= \sum _i w_i + \sum_j q_{W_j}$ such that $\pi (z)\to x$ and $\pi (y)\to x$ in $F$. Now using the same changes than in the paths $\pi (y)\to x$ and $\pi (z)\to x$, but lifted to $M^{\gr}_E$,
we obtain that  $y = \sum _i a_{w_i}(\eta_i) + \sum _j b_{W_j}(\nu_j)$ in $M^{\gr}_E$ and $z = \sum _i a_{w_i}(\eta'_i)+ \sum _j b_{W_j}(\nu'_j)$ in $M^{\gr}_E$. But now $y\in J$ and $J$ is an order ideal
of $M^{\gr}_E$, so it follows that $a_{w_i}(\eta_i) \in J$ for all $i$ and
$b_{W_j}(\nu_j) \in J$ for all $j$. Using that $J$ is invariant, we obtain
$a_{w_i}(\eta_i') \in J$ for all $i$ and $b_{W_j}(\nu_j') \in J$ for all $j$. Thus $z =
\sum _i a_{w_i}(\eta_i') + \sum _j b_{W_j}(\nu_j')\in J$ and we conclude the proof.

Now using that $J= \pi^{-1} (\pi (J))$, we can easily show that $\pi (J)$ is an order-ideal of $M_E$ and that the map $\phi$ is bijective, with $\phi^{-1}(J) =\pi (J)$.
\end{proof}

We can now state the main theorem of this section, which indicates that the graded
$K_0$-group captures the lattice structure of graded ideals of a Leavitt path algebra.

\begin{thm}\label{statelib2}
Let $E$ be an arbitrary graph and $K$ a field. Consider $L_K(E)$ as a graded ring with
the grading determined by the function $w : E^1 \to \Z$ such that $w(e) = 1$ for all $e$.
Then there is a one-to-one correspondence between the admissible pairs of $E^0$ and the
graded ordered ideals of $K_0^{\gr}(L_K(E))$.
\end{thm}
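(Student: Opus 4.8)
The plan is to obtain the correspondence by composing the lattice isomorphisms already in place with one final passage from the graded monoid $\VV^{\gr}(L_K(E))$ to its group completion $K_0^{\gr}(L_K(E))$. Recall the three ingredients available to us in the case $\G = \Z$ with $w \equiv 1$: the lattice isomorphism $\mathcal{H} \cong \mathcal{L}(M_E)$ of~\eqref{latticeiso} between admissible pairs of $E^0$ and order-ideals of $M_E$; the lattice isomorphism $\mathcal{L}(M_E) \cong \mathcal{L}^c(M_E^{\gr})$ of Lemma~\ref{lem:lattices-of-ideals}, where $\mathcal{L}^c(M_E^{\gr})$ denotes the order-ideals of $M_E^{\gr}$ that are invariant under the $\Z$-action; and the $\Z$-module isomorphism $\VV^{\gr}(L_K(E)) \cong M_E^{\gr}$ of Proposition~\ref{propformonoid}. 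Since the last isomorphism is $\Z$-equivariant, it restricts to a bijection between $\mathcal{L}^c(M_E^{\gr})$ and the lattice of $\Z$-invariant order-ideals of $M := \VV^{\gr}(L_K(E))$, the $\Z$-action on $M$ being the one induced by the shift functors $\mathcal{T}_\a$.

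It then remains to establish a bijection between the $\Z$-invariant order-ideals of $M$ and the graded ordered ideals of $K_0^{\gr}(L_K(E))$. By Corollary~\ref{corinj} the monoid $M$ is cancellative, and being a $\VV$-monoid it is conical; hence the canonical map $M \to K_0^{\gr}(L_K(E))$ into its group completion is injective, and I would identify $M$ with its image, which is precisely the positive cone $K_0^{\gr}(L_K(E))^{+}$. The $\Z$-action on $M$ extends to the $\Z[x,x^{-1}]$-module structure on $K_0^{\gr}(L_K(E))$, with $x$ acting as the shift. I would then define a map sending a $\Z$-invariant order-ideal $J$ of $M$ to the subgroup $I := J - J$ of $K_0^{\gr}(L_K(E))$, and a map sending a graded ordered ideal $I$ to $I^{+} = I \cap M$.

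To finish, I would verify that these two maps are mutually inverse and order-preserving. That $J - J$ is a graded ordered ideal is routine: it is a subgroup, it is closed under $\Z[x,x^{-1}]$ because $J$ is $\Z$-invariant, and it satisfies $I = I^{+} - I^{+}$ by construction. The essential point is the identity $(J - J) \cap M = J$: if $x \in M$ is written $x = a - b$ with $a,b \in J$, then $x + b = a \in J$ in $M$ (using injectivity of $M \to K_0^{\gr}(L_K(E))$), and since $J$ is an order-ideal this forces $x \in J$; the reverse inclusion $J \subseteq (J-J)\cap M$ is clear. Conversely, for a graded ordered ideal $I$ the set $I^{+} = I \cap M$ is, by the very definition of graded ordered ideal, a $\Z$-invariant order-ideal of $M$, and $I = I^{+} - I^{+}$ recovers $I$. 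Composing this bijection with the three isomorphisms of the first paragraph yields the desired one-to-one correspondence.

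The main obstacle is precisely this final group-completion step, namely checking that $J \mapsto J - J$ and $I \mapsto I \cap M$ are genuinely inverse to one another. This is where cancellativity of $\VV^{\gr}(L_K(E))$ from Corollary~\ref{corinj} is indispensable: without it the map $M \to K_0^{\gr}(L_K(E))$ need not be injective, the positive cone of the group completion need not recover $M$, and the key identity $(J-J)\cap M = J$ can fail.
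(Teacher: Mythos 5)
Your proposal follows essentially the same route as the paper: it composes the lattice isomorphism \eqref{latticeiso}, Lemma~\ref{lem:lattices-of-ideals}, and the $\G$-equivariant identification of Proposition~\ref{propformonoid}, and then uses the injectivity of $\VV^{\gr}(L_K(E))\to K_0^{\gr}(L_K(E))$ from Corollary~\ref{corinj} to pass to graded ordered ideals of the Grothendieck group. Your explicit verification that $J\mapsto J-J$ and $I\mapsto I\cap M$ are mutually inverse (in particular the identity $(J-J)\cap M=J$ via cancellativity) correctly fills in the final step that the paper states without detail.
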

\begin{proof}
Let $\mathcal{H}$ be the set of all admissible pairs of $E^0$ and $\mathcal{L}(K^{\rm
gr}_0(\AA))$ the set of all graded ordered ideals of $K^{\rm gr}_0(\AA)$, where $\AA =
L_K (E)$. We first claim that there is a one-to-one correspondence between the
order-ideals of $M_{E}$ and order-ideals of $M^{\rm gr}_{E}$ which are closed under the
$\mathbb{Z}$-action. Let $\mathcal{L}^{c}(M_{E}^{\rm gr})$ be the set of order-ideals of
$M^{\rm gr}_{E}$ which are closed under the $\mathbb{Z}$-action.

The map $\phi:\mathcal{L}(M_{E}) \xra \mathcal{L}^{c}(M^{\rm gr}_E)$ has been defined in
Lemma \ref{lem:lattices-of-ideals}, where it is proved that it is a lattice isomorphism.

By Corollary \ref{corinj}, we have an injective homomorphism $\VV^{\rm gr}(\AA)\xra
K^{\rm gr}_0(\AA)$. By Proposition \ref{propformonoid}, there is a one-to-one
corespondence between the order-ideals of $M_{E}^{\rm gr}$ which are closed under the
$\mathbb{Z}$-action and the graded ordered ideals of $K^{\rm gr}_0(\AA)$. Finally
by~\eqref{latticeiso}, we have lattice isomorphisms
\begin{equation*}
\mathcal{H} \cong \mathcal{L}(M_{E})
    \cong \mathcal{L}^{c}(M^{\rm gr}_E)
    \cong \mathcal{L}(K^{\rm gr}_0(\AA)).\qedhere
\end{equation*}
\end{proof}

\section{Application: Kumjian--Pask algebras}
\label{sectionsix}

In this section we will use our result on smash products (Theorem~\ref{isothm}) to study
the structure of Kumjian--Pask algebras~\cite{pchr} and their graded $K$-groups. We will
see that the graded $K_0$-group remains a useful invariant for studying Kumjian--Pask
algebras. We deal exclusively with row-finite $k$-graphs with no sources: our analysis
for arbitrary graphs relied on constructions like desingularisation that are not
available in general for $k$-graphs. We briefly recall the definition of Kumjian--Pask
algebras and establish our notation. We follow the conventions used in the literature of
this topic (in particular the paths are written from right to left).

Recall that a \emph{graph of rank} $k$ or $k$-graph is a countable category $\Lambda =
(\Lambda^0,\Lambda,r,s)$ together with a functor $d: \Lambda \rightarrow \mathbb N^k$,
called the \emph{degree map}, satisfying the following factorisation property: if
$\lambda \in \Lambda$ and $d(\lambda) = m + n$ for some $m,n \in \mathbb N^k$, then there
are unique $\mu,\nu \in  \Lambda$ such that $d(\mu) = m, d(\nu) = n$, and $\lambda = \mu
\nu$. We say that $\Lambda$ is \emph{row finite} if $r^{-1}(v) \cap d^{-1}(n)$,
abbreviated $v\Lambda^n$ is finite for all $v \in \Lambda^0$ and $n \in \mathbb{N}^k$; we
say that $\Lambda$ has \emph{no sources} if each $v\Lambda^n$ is nonempty.

An important example is the $k$-graph $\Omega_k$ defined as a set by $\Omega_k = \{(m,n)
\in \mathbb{N}^k \times \mathbb{N}^k : m \le n\}$ with $d(m,n) = n-m$, $\Omega_k^0 =
\mathbb{N}^k$, $r(m,n) = m$, $s(m,n) = n$ and $(m,n)(n,p) = (m,p)$.

\begin{defi}
Let $\Lambda$ be a row-finite $k$-graph without sources and $K$ a field. The
\emph{Kumjian--Pask} $K$-algebra of $\Lambda$ is the $K$-algebra $\KP_K(\Lambda)$
generated by $\Lambda \cup \Lambda^*$ subject to the relations
\begin{enumerate}
\item[(KP1)] $\{v\in \Lambda^0\}$ is a family of mutually orthogonal idempotents
    satisfying $v = v^*$,
\item[(KP2)] for all $\lambda, \mu\in\Lambda$ with $r(\mu) = s(\lambda)$, we have
\[ \lambda \mu  = \lambda \circ \mu, \; \mu^* \lambda^* = (\lambda \circ \mu)^*, \;
 r(\lambda)\lambda = \lambda = \lambda s(\lambda), \;
  s(\lambda) \lambda^* = \lambda^* = \lambda^* r(\lambda),
\]
\item[(KP3)] for all $\lambda, \mu \in\Lambda$ with $d(\lambda) = d(\mu)$, we have
\[
\lambda^* \mu = \delta_{\lambda,\mu} s(\lambda),
\]
\item[(KP4)] for all $v\in\Lambda^0$ and all $n\in {\mathbb N}^k\setminus \{0\}$, we
    have
\[
v = \sum_{\lambda\in v\Lambda^n} \lambda\lambda^*.
\]
\end{enumerate}
\end{defi}

Let $\Lambda$ be a  a row-finite $k$-graph without sources and $\KP_K(\Lambda)$ the
Kumjian--Pask algebra of $\Lambda$. Following \cite[\S 2]{kp2000}, an infinite path in
$\Lambda$ is a degree-preserving functor $x : \Omega_{k}\xra \Lambda$. Denote the set of
all infinite paths by $\Lambda^{\infty}$. We define the relation of \emph{tail
equivalence} on the space of infinite path $\Lambda^{\infty}$ as follows: for $x,y \in
\Lambda^{\infty}$, we say $x$ is tail equivalent to $y$, denoted, $x\sim y$, if
$x(n,\infty)=y(m,\infty)$, for some $n,m \in \mathbb{N}^k$. This is an equivalence
relation. For $x\in \Lambda^\infty$, we denote by $[x]$ the equivalence class of $x$,
i.e., the set of all infinite paths which are tail equivalent to $x$. An infinite path
$x$ is called \emph{aperiodic} if $x(n,\infty)=x(m,\infty)$, $n,m \in \mathbb N^k$,
implies $n=m$.

We can form the skew-product $k$-graph, or covering graph, $\overline{\Lambda} = \Lambda
\times_d \mathbb{Z}^k$ which is equal as a set to $\Lambda \times \mathbb{Z}^k$, has
degree map given by $\overline{d}(\lambda, n) = d(\lambda)$, range and source maps
$r(\lambda,n) = (r(\lambda), n)$ and $s(\lambda, n) = (s(\lambda), n + d(\lambda))$ and
composition given by $(\lambda, n)(\mu, n + d(\lambda)) = (\lambda\mu, n)$.

As in the theory of Leavitt path algebras, one can model Kumjian--Pask algebras as
Steinberg algebras via the infinite-path groupoid of the $k$-graph (see
\cite[Proposition~5.4]{cp}). For the $k$-graph $\Lambda$,
\[
\mg_{\Lambda}=\big \{(x, l-m, y)\in \Lambda^{\infty}\times \mathbb{Z}^{k}\times \Lambda^{\infty} \mid x(l, \infty) = y(m, \infty)\big\}.
\]
Define range and source maps $r, s: \mg_{\Lambda}\xra \Lambda^{\infty}$ by $r(x, n, y)=x$
and $s(x, n, y)=y$. For $(x, n, y), (y, l, z)\in \mg_{\Lambda}$, the multiplication and
inverse are given by $(x, n, y)(y, l, z)=(x, n+l, z)$ and $(x, n, y)^{-1}=(y, -n, x)$.
$\mg_{\Lambda}$ is a groupoid with $\Lambda^{\infty}=\mg_{\Lambda}^{(0)}$ under the
identification $x\mapsto (x, 0, x)$. For $\mu,\nu \in \Lambda$ with $s(\mu)= s(\nu)$, let
$Z(\mu,\nu) := \{(\mu x, d(\mu)-d(\nu), \nu x) : x \in \Lambda^\infty, x(0) = s(\mu)\}$.
Then the sets $Z(\mu,\nu)$ comprise a basis of compact open sets for an ample Hausdorff
topology on $\mg_\Lambda$. There is a continuous $1$-cocycle $c : \mg_\Lambda \to
\mathbb{Z}^k$ given by $c(x,m,y) = m$.

For the skew-product $k$-graph $\overline{\Lambda}=\Lambda \times_d \mathbb{Z}^k$, we
have $\mg_{\overline{\Lambda}}\cong \mg_{\Lambda}\times_c \mathbb{Z}^k$ (see
\cite[Theorem~5.2]{kp2000}). Thus specialising Theorem~\ref{isothm} to this setting, we
have
\begin{equation}\label{ttteeett}
\KP_K(\overline{\Lambda}) \cong \KP_K(\Lambda)\#\mathbb{Z}^k.
\end{equation}
We will show that $\KP_K(\overline{\Lambda})$ is an ultramatricial algebra.

\begin{lem}\label{lem:summands}
For $n \in \mathbb{Z}^k$ define $B_n \subseteq \KP_K(\overline{\Lambda})$ by
\[
B_n =
\operatorname{span}_K\big\{(\lambda, n - d(\lambda))(\mu, n - d(\mu))^* \mid  \lambda,\mu \in
\Lambda, s(\lambda) = s(\mu)\big\}.
\]
Then $B_n$ is a subalgebra of $\KP_K(\overline{\Lambda})$ and there is an isomorphism
$B_n \cong \bigoplus_{v \in \Lambda^0} M_{\Lambda v}(K)$ that carries $(\lambda,
n-d(\lambda))(\mu, n - d(\mu))^*$ to the matrix unit $\mathbf{e}_{\lambda, \mu}$.
\end{lem}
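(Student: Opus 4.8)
The plan is to build the isomorphism in the direction $\bigoplus_{v\in\Lambda^0} M_{\Lambda v}(K) \to \KP_K(\overline{\Lambda})$ and identify its image with $B_n$; this shows in one stroke both that $B_n$ is a subalgebra and that it is isomorphic to the claimed direct sum, with the stated matrix-unit correspondence arising as the inverse. For $\lambda\in\Lambda$ with $s(\lambda)=v$, abbreviate $\overline{\lambda} := (\lambda, n-d(\lambda))$. First I would record the elementary identities coming from the source, range and degree maps of $\overline{\Lambda} = \Lambda\times_d\mathbb{Z}^k$: the element $\overline{\lambda}$ is a path of degree $d(\lambda)$ with source $s(\overline{\lambda}) = (v,n)$ and range $(r(\lambda), n-d(\lambda))$. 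The placement of $d(\lambda)$ in the second coordinate of the range is exactly the feature that drives the whole argument.

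The heart of the proof is the multiplication rule
\[
(\overline{\lambda}\,\overline{\mu}^{\,*})(\overline{\sigma}\,\overline{\tau}^{\,*}) = \delta_{\mu,\sigma}\,\overline{\lambda}\,\overline{\tau}^{\,*}
\]
for $\lambda,\mu,\sigma,\tau\in\Lambda$ with $s(\lambda)=s(\mu)$ and $s(\sigma)=s(\tau)$. To establish it I would compute $\overline{\mu}^{\,*}\overline{\sigma}$. By (KP1)--(KP2) this vanishes unless $r(\overline{\mu})=r(\overline{\sigma})$, and since $r(\overline{\mu}) = (r(\mu), n-d(\mu))$ and $r(\overline{\sigma}) = (r(\sigma), n-d(\sigma))$, that equality forces \emph{both} $r(\mu)=r(\sigma)$ and $d(\mu)=d(\sigma)$. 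With the degrees forced equal, (KP3) applies in $\KP_K(\overline{\Lambda})$ and yields $\overline{\mu}^{\,*}\overline{\sigma} = \delta_{\overline{\mu},\overline{\sigma}}\,s(\overline{\mu}) = \delta_{\mu,\sigma}\,(s(\mu),n)$. Substituting and absorbing the vertex $(s(\mu),n)=s(\overline{\lambda})=s(\overline{\tau})$ via (KP2) gives the displayed rule. I regard this reduction as the main obstacle: in a general $k$-graph the product $\mu^{*}\sigma$ is a sum over minimal common extensions and can be complicated, but the skew product records $d(\mu)$ in the $\mathbb{Z}^k$-coordinate of $r(\overline{\mu})$, so that $\overline{\mu},\overline{\sigma}$ have equal range only when their degrees agree, collapsing the product to the single Kronecker-delta term.

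With the rule in hand, I would define a $K$-linear map $\Psi \colon \bigoplus_{v\in\Lambda^0} M_{\Lambda v}(K) \to \KP_K(\overline{\Lambda})$ on matrix units by $\Psi(\mathbf{e}_{\lambda,\mu}) = \overline{\lambda}\,\overline{\mu}^{\,*}$, where $\mathbf{e}_{\lambda,\mu}$ lies in the block $v=s(\lambda)=s(\mu)$ and $M_{\Lambda v}(K)$ denotes finitely supported $\Lambda v\times\Lambda v$ matrices (as $\Lambda v$ may be infinite). Since the matrix units form a $K$-basis satisfying $\mathbf{e}_{\lambda,\mu}\mathbf{e}_{\sigma,\tau} = \delta_{\mu,\sigma}\mathbf{e}_{\lambda,\tau}$ — precisely the multiplication rule above — $\Psi$ is an algebra homomorphism, and its image is $\operatorname{span}_K\{\overline{\lambda}\,\overline{\mu}^{\,*}\} = B_n$. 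Thus $B_n$ is a subalgebra of $\KP_K(\overline{\Lambda})$ and $\Psi$ corestricts to a surjection onto $B_n$.

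Finally I would prove $\Psi$ injective. Each block $M_{\Lambda v}(K)$ is a simple algebra with local units (the finite identity submatrices), so any two-sided ideal of the algebraic direct sum $\bigoplus_v M_{\Lambda v}(K)$ decomposes as a direct sum of a subcollection of the blocks; in particular $\ker\Psi$ is of this form. To conclude $\ker\Psi = 0$ it therefore suffices to check that $\Psi$ is nonzero on each block, and this is immediate since $\Psi(\mathbf{e}_{v,v}) = (v,n)$ is a nonzero vertex idempotent of $\KP_K(\overline{\Lambda})$. Hence $\Psi$ restricts to an isomorphism $\bigoplus_{v\in\Lambda^0} M_{\Lambda v}(K) \xrightarrow{\sim} B_n$, whose inverse is the desired isomorphism carrying $(\lambda, n-d(\lambda))(\mu, n-d(\mu))^*$ to $\mathbf{e}_{\lambda,\mu}$.
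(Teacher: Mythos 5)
Your proof is correct and follows essentially the same route as the paper: the key observation in both is that the $\mathbb{Z}^k$-coordinate of the range in the skew product forces $d(\mu)=d(\sigma)$ whenever $\overline{\mu}^{\,*}\overline{\sigma}\neq 0$, so (KP3) collapses the product to $\delta_{\mu,\sigma}(s(\mu),n)$ and the elements $\overline{\lambda}\,\overline{\mu}^{\,*}$ obey the matrix-unit relations. The only difference is cosmetic: where the paper appeals to the ``uniqueness'' of families of matrix units, you spell out the injectivity via simplicity of the blocks and nonvanishing of the vertex idempotents $(v,n)$, which is a slightly more careful rendering of the same step.
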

\begin{proof}
For the first statement we just have to show that for any $\lambda,\mu,\eta,\zeta\in
\Lambda$ we have \[(\lambda, n-d(\lambda))(\mu, n - d(\mu))^*(\eta, n-d(\eta))(\zeta, n -
d(\zeta))^* \in B_n.\] This follows from the argument of \cite[Lemma~5.4]{kp2000}. To
wit, we have $(\mu, n - d(\mu))^*(\eta, n-d(\eta)) = 0$ unless $r(\mu, n - d(\mu)) =
r(\eta, n-d(\eta))$, which in turn forces $d(\mu) = d(\eta)$. But then $\overline{d}(\mu,
n - d(\mu)) = \overline{d}(\eta, n-d(\eta))$, and then the Cuntz--Krieger relation forces
$(\mu, n - d(\mu))^*(\eta, n-d(\eta)) = \delta_{\mu,\eta} (s(\mu), n)$. Hence \[(\lambda,
n-d(\lambda))(\mu, n - d(\mu))^*(\eta, n-d(\eta))(\zeta, n - d(\zeta))^* =
\delta_{\mu,\eta} (\lambda, n-d(\lambda))(\zeta, n - d(\zeta))^* \in B_n.\] For each
$v\in \Lambda^0$, $M_{\overline{\Lambda} (v, n)}(K)\cong M_{\Lambda v}(K)$. So the
elements $(\lambda, n-d(\lambda))(\mu, n - d(\mu))^*$ satisfy the same multiplication
formula as the matrix units $\mathbf{e}_{\lambda,\mu}$ in $\bigoplus_{v \in \Lambda^0}
M_{\Lambda v}(K)$. Hence the uniqueness of the latter shows that there is an isomorphism
as claimed.
\end{proof}

\begin{lem}\label{lem:dirlim}
For $m \le n \in \mathbb{Z}^k$, we have $B_m \subseteq B_n$, and in particular for each
$v \in \Lambda^0$, we have $(v,m) = \sum_{\alpha \in v\Lambda^{n-m}} (\alpha,
m)(\alpha,m)^*$.
\end{lem}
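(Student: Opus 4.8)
The plan is to derive both assertions directly from the Cuntz--Krieger relation (KP4) in the skew-product $k$-graph $\overline{\Lambda} = \Lambda\times_d \mathbb{Z}^k$, which is again row-finite with no sources. Fix $v\in\Lambda^0$ and $m\le n$ in $\mathbb{Z}^k$. I first dispose of the case $n=m$: then $B_m=B_n$, the index set $v\Lambda^{0}$ equals $\{v\}$, and the asserted identity reduces to $(v,m)=(v,m)(v,m)^*$, which holds by (KP1) since $(v,m)=(v,m)^*$ is idempotent. So I may assume $n-m\in\mathbb{N}^k\setminus\{0\}$ and apply (KP4) at the vertex $(v,m)\in\overline{\Lambda}^0$ with degree $n-m$.

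The key step is to identify the paths over which the Cuntz--Krieger sum ranges. Since $\overline{d}(\alpha,l)=d(\alpha)$ and $r(\alpha,l)=(r(\alpha),l)$, the morphisms of $\overline{\Lambda}$ of degree $n-m$ with range $(v,m)$ are precisely the elements $(\alpha,m)$ with $\alpha\in v\Lambda^{n-m}$. Hence (KP4) yields
\[
(v,m)=\sum_{\alpha\in v\Lambda^{n-m}}(\alpha,m)(\alpha,m)^*,
\]
which is the ``in particular'' statement. Moreover, for $\alpha\in\Lambda^{n-m}$ we have $n-d(\alpha)=m$, so each summand equals $(\alpha,n-d(\alpha))(\alpha,n-d(\alpha))^*$ and already lies in $B_n$; this is the case $\lambda=\mu=v$ of the inclusion.

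For the general inclusion $B_m\subseteq B_n$, I take a spanning element $(\lambda,m-d(\lambda))(\mu,m-d(\mu))^*$ of $B_m$ with $v:=s(\lambda)=s(\mu)$, and insert the vertex identity between its two factors, using that the source of $(\lambda,m-d(\lambda))$ in $\overline{\Lambda}$ is $(s(\lambda),m)=(v,m)$. This expresses the element as $\sum_{\beta\in v\Lambda^{n-m}}(\lambda,m-d(\lambda))(\beta,m)(\beta,m)^*(\mu,m-d(\mu))^*$. The composition formula $(\eta,a)(\zeta,a+d(\eta))=(\eta\zeta,a)$ then gives $(\lambda,m-d(\lambda))(\beta,m)=(\lambda\beta,m-d(\lambda))=(\lambda\beta,n-d(\lambda\beta))$, and dually, via (KP2), $(\beta,m)^*(\mu,m-d(\mu))^*=(\mu\beta,n-d(\mu\beta))^*$, using $n-d(\lambda\beta)=m-d(\lambda)$ and $n-d(\mu\beta)=m-d(\mu)$. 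Since $s(\lambda\beta)=s(\beta)=s(\mu\beta)$, every term is a generator of $B_n$, so $B_m\subseteq B_n$.

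I expect the only real care to be required in the $\mathbb{Z}^k$-coordinate bookkeeping: at each composition one must check that the pairs are genuinely composable in $\overline{\Lambda}$ and that the resulting second coordinates match the pattern $n-d(\text{first coordinate})$ built into the definition of $B_n$. This is routine degree-tracking rather than a conceptual obstacle; row-finiteness makes every sum finite, and the no-sources hypothesis guarantees that $v\Lambda^{n-m}$ is nonempty (the latter not being needed for the inclusion itself).
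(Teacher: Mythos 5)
Your proof is correct and takes essentially the same route as the paper's: both insert the Cuntz--Krieger decomposition of the vertex $(s(\lambda),m)$ of $\overline{\Lambda}$ at degree $n-m$ between the two factors of a spanning element of $B_m$ and track the $\mathbb{Z}^k$-coordinates through the composition formula. The only immaterial differences are that you establish the ``in particular'' identity first and then deduce the inclusion, whereas the paper derives the inclusion and obtains the identity by setting $\lambda=\mu=v$, and that you treat the trivial case $n=m$ separately.
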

\begin{proof}
Again, this follows from the proof of \cite[Lemma~5.4]{kp2000}. We just apply the
Cuntz--Krieger relation, using at the first equality that $\Lambda$ has no sources:
\begin{align*}
(\lambda, m-d(\lambda))(\mu, m - d(\mu))^*
    &= (\lambda, m-d(\lambda))\Big(\sum_{\alpha \in s(\lambda)\Lambda^{n-m}} (\alpha, m)(\alpha,m)^*\Big)(\mu, m - d(\mu))^*\\
    &= \sum_{\alpha \in s(\lambda)\Lambda^{n-m}} (\lambda\alpha, m-d(\lambda))(\mu\alpha, m-d(\mu))^* \in B_n.
\end{align*}
This gives the first assertion, and the second follows by taking $\lambda = \mu = v$.
\end{proof}

\begin{thm}\label{poryt}
Let $\Lambda$ be a row-finite $k$-graph with no sources and $K$ a field. Then the
Kumjian--Pask algebra $\KP_K(\Lambda)$ is a graded von Neumann regular ring.
\end{thm}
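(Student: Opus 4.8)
The plan is to combine the ultramatricial structure of $\KP_K(\overline{\Lambda})$ with the graded isomorphism \eqref{ttteeett} and Lemma~\ref{grneumann}. The crucial point is that $\KP_K(\Lambda)$ itself is typically \emph{not} von Neumann regular (already for $k=1$ it is regular only when $\Lambda$ is acyclic), but its smash product by $\mathbb{Z}^k$ is; Lemma~\ref{grneumann} is exactly the tool that transports graded regularity back across the smash product.

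First I would establish that $\KP_K(\overline{\Lambda})$ is an ultramatricial $K$-algebra. Lemma~\ref{lem:dirlim} gives $B_m \subseteq B_n$ whenever $m \le n$, and since $(\mathbb{Z}^k,\le)$ is directed the subalgebras $\{B_n\}_{n\in\mathbb{Z}^k}$ form a directed family. I would then check that this family exhausts $\KP_K(\overline{\Lambda})$: a spanning element $(\mu,p)(\nu,q)^*$ is nonzero only when $s(\mu,p)=s(\nu,q)$ in $\overline{\Lambda}$, which forces $s(\mu)=s(\nu)$ and $p+d(\mu)=q+d(\nu)=:n$, so that the element equals $(\mu,n-d(\mu))(\nu,n-d(\nu))^* \in B_n$. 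Hence $\KP_K(\overline{\Lambda}) = \bigcup_{n\in\mathbb{Z}^k} B_n$, and by Lemma~\ref{lem:summands} each $B_n \cong \bigoplus_{v\in\Lambda^0} M_{\Lambda v}(K)$ is a direct sum of matrix algebras over $K$. Thus $\KP_K(\overline{\Lambda})$ is a directed union of von Neumann regular rings, and is therefore itself von Neumann regular.

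Next, since the isomorphism \eqref{ttteeett} obtained by specialising Theorem~\ref{isothm} is an isomorphism of $\mathbb{Z}^k$-graded algebras, $\KP_K(\Lambda)\#\mathbb{Z}^k$ is isomorphic as a graded ring to the von Neumann regular ring $\KP_K(\overline{\Lambda})$. A von Neumann regular ring is graded von Neumann regular with respect to any grading: for a homogeneous element $a$, any quasi-inverse $b$ with $aba=a$ already witnesses the defining condition (and if a homogeneous witness is wanted, one checks that the degree $-\deg(a)$ component of $b$ also satisfies $aba=a$). Therefore $\KP_K(\Lambda)\#\mathbb{Z}^k$ is graded von Neumann regular, and Lemma~\ref{grneumann}, applied with $A=\KP_K(\Lambda)$ and $\G=\mathbb{Z}^k$, yields that $\KP_K(\Lambda)$ is graded von Neumann regular, as required.

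The only genuinely delicate step is the verification that the directed union of the $B_n$ is all of $\KP_K(\overline{\Lambda})$, together with the observation that each summand $M_{\Lambda v}(K)$—whose index set $\Lambda v$ is typically infinite—is von Neumann regular; the latter holds because $M_{\Lambda v}(K)$ is the directed union of its finite corners $M_F(K)$, each of which is simple Artinian. Everything else is routine once Lemmas~\ref{lem:summands}, \ref{lem:dirlim} and~\ref{grneumann} are in hand.
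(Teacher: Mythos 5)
Your proposal is correct and follows essentially the same route as the paper: both reduce via Lemma~\ref{grneumann} and the graded isomorphism \eqref{ttteeett} to showing that $\KP_K(\overline{\Lambda})$ is an ultramatricial (hence regular) algebra using Lemmas~\ref{lem:summands} and~\ref{lem:dirlim}. The extra details you supply --- the verification that the directed family $\{B_n\}$ exhausts $\KP_K(\overline{\Lambda})$, the regularity of the possibly infinite matrix algebras $M_{\Lambda v}(K)$ as directed unions of finite corners, and the observation that ordinary regularity implies graded regularity under the paper's definition --- are all correct and merely flesh out steps the paper leaves implicit.
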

\begin{proof}
Lemma~\ref{grneumann} shows that $\KP_K(\Lambda)$ is graded regular if and only if
$\KP_K(\Lambda)\#\mathbb{Z}^k$ is graded regular. By~(\ref{ttteeett})
$\KP_K(\Lambda)\#\mathbb{Z}^k \cong \KP_K(\overline{\Lambda})$ and the latter is an
ultramatricial algebra by Lemma~\ref{lem:dirlim}. Since ultramatricial algebras are
regular, the theorem follows.
\end{proof}

Since $\KP_K(\Lambda)$ is graded von Neumann regular, we immediately obtain the following
statements.

\begin{thm}\label{poryt1}
Let $\Lambda$ be a row-finite $k$-graph with no sources and $K$ a field. Then the
Kumjian--Pask algebra $A=\KP_K(\Lambda)$ has the following properties:
\begin{enumerate}[\upshape(1)]
\item any finitely generated right (left) graded ideal of $A$ is generated by one
    homogeneous idempotent;
\item any graded right (left) ideal of $A$ is idempotent;
\item any graded ideal is graded semi-prime;
\item $J(A)=J^{\gr}(A) =0$; and
\item there is a one-to-one correspondence between the graded right (left) ideals of
    $A$ and the right (left) ideals of $A_0$.
\end{enumerate}
\end{thm}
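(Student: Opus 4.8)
The five assertions are uniform structural consequences of the single fact, established in Theorem~\ref{poryt}, that $A := \KP_K(\Lambda)$ is a $\Z^k$-graded von Neumann regular ring; note also that, being (isomorphic to) the Steinberg algebra $A_K(\mg_\Lambda)$, the ring $A$ carries graded local units (see \S\ref{subsetion32}). The plan is therefore to record how each property follows from graded regularity, the only genuine care being that $A$ need not be unital, so that steps usually phrased with an identity element must instead be run inside corners $uAu$ for homogeneous local units $u$. Throughout, for a homogeneous element $a\in A_\gamma$ I would use graded regularity to choose $b\in A_{\gamma^{-1}}$ with $aba=a$ (the degree-$(-\gamma)$ component of any witness works); then $ab$ and $ba$ are homogeneous idempotents of degree $0$ with $aA=(ab)A$ and $Aa=A(ba)$.

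For (1), given a finitely generated graded right ideal $I=a_1A+\cdots+a_nA$ with the $a_i$ homogeneous, the observation above writes each $a_iA=e_iA$ for a homogeneous idempotent $e_i\in A_0$, and a standard induction (carried out inside a corner $uAu$ containing all the $e_i$) reduces $e_1A+\cdots+e_nA$ to a single $eA$ with $e$ a homogeneous idempotent; the left-handed statement is symmetric. For (2), let $I$ be a graded right ideal and $a\in I$ homogeneous; then $ab\in I$ because $I$ is a right ideal, so $a=(ab)a\in I^2$, and since $I$ is graded it is spanned by its homogeneous elements, whence $I=I^2$. Assertion (3) is then immediate: if $J$ is a graded ideal and $N$ is a graded ideal with $N^2\subseteq J$, then $N=N^2\subseteq J$, so every graded ideal is graded semiprime.

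For (4) I would first prove $J^{\gr}(A)=0$ directly. If $a\in J^{\gr}(A)$ is homogeneous, then $ab\in J^{\gr}(A)$ is a homogeneous idempotent; passing to a unital corner $uAu$ gives $ab\in J^{\gr}(uAu)$, and a maximal graded left ideal containing the proper graded left ideal $uAu(u-ab)$ shows that $J^{\gr}$ of a unital graded ring contains no nonzero idempotent, so $ab=0$ and hence $a=aba=0$; as $J^{\gr}(A)$ is graded, $J^{\gr}(A)=0$. To reach the ordinary radical I would invoke the inclusion $J(A)\subseteq J^{\gr}(A)$, valid for any group-graded ring: for a graded-simple module $S$ the submodule $J(A)S$ is graded, hence $0$ or $S$, and the graded Nakayama lemma applied to the cyclic unital module $S$ (again inside a corner) rules out $J(A)S=S$, so $J(A)$ annihilates every graded-simple module and $J(A)\subseteq J^{\gr}(A)=0$. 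Finally, for (5) the mutually inverse, inclusion-preserving bijections are $I\mapsto I\cap A_0$ and $\mathfrak a\mapsto \mathfrak a A$ between graded right ideals of $A$ and right ideals of $A_0$: for homogeneous $a\in I$ one has $ab\in I\cap A_0$ and $a=(ab)a$, giving $I=(I\cap A_0)A$, while for a right ideal $\mathfrak a$ of $A_0$ the local units yield $\mathfrak a A_0=\mathfrak a$ and hence $(\mathfrak a A)\cap A_0=\mathfrak a$; the left version is symmetric, and this is exactly the correspondence recorded for graded von Neumann regular rings in \cite{haz}.

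The step I expect to be the main obstacle is not any single computation but the uniform handling of the missing identity: the idempotent-combining induction in (1), the Nakayama argument in (4), and the identities $I=(I\cap A_0)A$ and $(\mathfrak a A)\cap A_0=\mathfrak a$ in (5) are all classically stated for unital rings, and each must be reorganised to run inside the corners $uAu$ supplied by the graded local units, with the verification that the graded-regular witnesses $b$ may always be taken homogeneous so that the idempotents $ab,ba$ genuinely land in $A_0$.
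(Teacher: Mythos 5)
Your proposal is correct and follows essentially the same route as the paper: the paper's entire proof of Theorem~\ref{poryt1} is the observation that all five assertions are standard properties of graded von Neumann regular rings (citing \cite[\S 1.1.9]{haz}), combined with Theorem~\ref{poryt}. You simply write out, with appropriate care for the non-unital setting via graded local units, the details that the paper delegates to that reference.
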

\begin{proof}
All the assertions are the properties of a graded von Neumann regular ring
\cite[\S1.1.9]{haz}, so the result follows from Theorem~\ref{poryt}.
\end{proof}

For the next result, given a $k$-graph $\Lambda$, and given $m \le n \in \mathbb{Z}^k$,
we define $\phi_{m,n} : \mathbb{N}\Lambda^0 \to \mathbb{N}\Lambda^0$ by $\phi_{m,n}(v) =
\sum_{w \in \Lambda^0} |v\Lambda^{n-m} w| w$.

\begin{cor}\label{cor:semigroup}Let $\Lambda$ be a row-finite $k$-graph with no sources and $K$ a field. There is an isomorphism
\[\textstyle
\VV(\KP_{K}(\overline{\Lambda})) \cong \varinjlim_{\mathbb{Z}^k} \big(\mathbb{N}\Lambda^0, \phi_{m,n})
\]
that carries $[(v,n)]$ to the copy of $v$ in the $n$th copy of $\mathbb{N}\Lambda^0$.
Fathermore, the monoid $\VV(\KP_{K}(\overline{\Lambda}))$ is cancellative.
\end{cor}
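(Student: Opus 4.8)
The plan is to exploit the ultramatricial structure of $\KP_K(\overline{\Lambda})$ recorded in Lemmas~\ref{lem:summands} and~\ref{lem:dirlim}. By those lemmas, $\KP_K(\overline{\Lambda})$ is the directed union $\bigcup_{n\in\mathbb{Z}^k} B_n$, where each $B_n\cong\bigoplus_{v\in\Lambda^0}M_{\Lambda v}(K)$ and the bonding maps are the inclusions $B_m\hookrightarrow B_n$ for $m\le n$, the index set $(\mathbb{Z}^k,\le)$ being directed. Using the idempotent description of $\VV$ from~\eqref{monoididempotent0}, the functor $\VV$ commutes with this direct limit: every idempotent matrix over $\bigcup_n B_n$ already has entries in some $B_n$, and two such idempotents are equivalent over the union if and only if they are equivalent over some $B_n$. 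Hence the first step is to write $\VV(\KP_K(\overline{\Lambda}))\cong\varinjlim_{\mathbb{Z}^k}\VV(B_n)$.

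Next I would identify each term. For a single block $M_{\Lambda v}(K)$ (finitely supported matrices indexed by $\Lambda v$) one has $\VV(M_{\Lambda v}(K))\cong\mathbb{N}$, generated by the class of any rank-one matrix unit; summing over $v$ and using that $\VV$ of a ring direct sum is the direct sum of the $\VV$'s (idempotent matrices are supported on finitely many blocks) gives $\VV(B_n)\cong\bigoplus_{v\in\Lambda^0}\mathbb{N}=\mathbb{N}\Lambda^0$. Under this identification the vertex idempotent $(v,n)$, which equals the matrix unit $\mathbf{e}_{v,v}$ in the $v$-block, maps to the generator $v$. This is exactly the promised identification of $\VV(B_n)$ with the $n$th copy of $\mathbb{N}\Lambda^0$ sending $[(v,n)]$ to $v$.

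The crux is to compute the homomorphism $\VV(B_m)\to\VV(B_n)$ induced by the inclusion for $m\le n$ and to match it with $\phi_{m,n}$. Here I would invoke the identity of Lemma~\ref{lem:dirlim}: inside $B_n$ we have $(v,m)=\sum_{\alpha\in v\Lambda^{n-m}}(\alpha,m)(\alpha,m)^*$, an orthogonal decomposition of the idempotent $(v,m)$ into the matrix units $(\alpha,n-d(\alpha))(\alpha,n-d(\alpha))^*=\mathbf{e}_{\alpha,\alpha}$ of $B_n$, where I use $d(\alpha)=n-m$. Each such matrix unit lies in the $s(\alpha)$-block, so its class in $\VV(B_n)=\mathbb{N}\Lambda^0$ is the generator $s(\alpha)$. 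Therefore the class of $(v,m)$ maps to $\sum_{\alpha\in v\Lambda^{n-m}}s(\alpha)=\sum_{w\in\Lambda^0}|v\Lambda^{n-m}w|\,w=\phi_{m,n}(v)$, which is precisely the bonding map of the target system. Consequently the two directed systems $\bigl(\VV(B_n),\text{inclusion-induced maps}\bigr)$ and $\bigl(\mathbb{N}\Lambda^0,\phi_{m,n}\bigr)$ are isomorphic, and passing to direct limits yields the asserted isomorphism together with its value $[(v,n)]\mapsto v$.

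Finally, for cancellativity I would combine two observations: each $\mathbb{N}\Lambda^0$ is free abelian, hence cancellative, and a direct limit of cancellative monoids over a directed poset is again cancellative. Indeed, if $x+z=y+z$ in the limit, one lifts $x,y,z$ to representatives at a common stage $i$; the equality in the limit means it already holds after applying some $\phi_{i,j}$, and cancellation in $M_j=\mathbb{N}\Lambda^0$ then forces the images of $x$ and $y$ to agree, so $x=y$ in the limit. I expect no deep obstacle here: the only points requiring care are the interchange of $\VV$ with these non-unital, possibly infinitely-indexed direct limits, and ensuring that the orthogonal decomposition of Lemma~\ref{lem:dirlim} is matched with the correct generators of $\mathbb{N}\Lambda^0$; both are bookkeeping rather than genuine difficulties.
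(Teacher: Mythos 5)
Your proposal is correct and follows essentially the same route as the paper's proof: both identify $\VV(B_n)\cong\mathbb{N}\Lambda^0$ via Lemma~\ref{lem:summands}, use the orthogonal decomposition of Lemma~\ref{lem:dirlim} to match the inclusion-induced maps with $\phi_{m,n}$, invoke continuity of $\VV$ under direct limits, and deduce cancellativity by lifting an equation $x+z=y+z$ to a single stage $\mathbb{N}\Lambda^0$, which is cancellative.
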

\begin{proof}
It is standard that there is an isomorphism $\VV\big(\bigoplus_{v \in \Lambda^0}
M_{\Lambda v}(K)\big) \cong \mathbb{N}\Lambda^0$ that takes
$\mathbf{e}_{\lambda,\lambda}$ to $s(\lambda)$ for all $\lambda$. So
Lemma~\ref{lem:summands} implies that there is an isomorphism $\VV(B_n) \to
\mathbb{N}\Lambda^0$ that carries $[(\lambda, n-d(\lambda))(\lambda, n-d(\lambda))^*]$ to
$s(\lambda)$ for all $\lambda$. Let $S_n$ be a copy $\mathbb{N}\Lambda^0\times \{n\}$ of
the monoid $\mathbb{N}\Lambda^0$ (so $(a,n) + (b,n) = (a+b,n)$ in $S_n$). Lemma~\ref{lem:dirlim} shows that these isomorphisms of monoids carry the inclusions $B_m
\hookrightarrow B_n$ to the maps $(v, m) \mapsto \sum_{\lambda \in v\Lambda^{n-m}}
(s(\lambda), n)$, which is precisely given by the formula $\phi_{m,n}$ for $m\leq
n\in\mathbb{Z}^k$. Since the monoid of a direct limit is the direct limit of the monoids
of the approximating algebras, we have an isomorphism $\VV(\KP_{K}(\overline{\Lambda}))
\cong \varinjlim_{\mathbb{Z}^k} S_n$, which sends $[(v, n)]$ to $(v,n)\in S_n$.

Suppose that $x+z=y+z$ in $\VV(KP_K(\overline{\Lambda}))$. By the isomorphism
$\VV(\KP_{K}(\overline{\Lambda})) \cong \varinjlim_{\mathbb{Z}^k} S_n$, there exist
images $x',y', z'$ of $x, y, z$, respectively, in $S_{n_0}=\mathbb{N}\Lambda^0\times
\{n_0\}$ for some $n_0\in\mathbb{Z}^k$ such that $x'+z'=y'+z'$. The monoid
$\mathbb{N}\Lambda^0$ is cancellative, so $\VV(KP_K(\overline{\Lambda}))$ is too.
\end{proof}

\begin{cor} Let $\Lambda$ be a row-finite $k$-graph with no sources and $K$ a field. Then $\VV^{\gr}(\KP_K(\Lambda)) \cong \varinjlim_{\mathbb{Z}^k}
\big(\mathbb{N}\Lambda^0, \phi_{m,n})$.
\end{cor}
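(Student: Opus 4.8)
The plan is to reduce the graded computation to the ungraded one already carried out in Corollary~\ref{cor:semigroup}, by transporting graded projective modules across the category isomorphism supplied by Proposition~\ref{proposition}. First I would record that $\KP_K(\Lambda) \cong A_K(\mg_\Lambda)$ is a $\mathbb{Z}^k$-graded ring with graded local units (Steinberg algebras of $\G$-graded ample Hausdorff groupoids have graded local units; see \S\ref{subsetion32}). Applying Proposition~\ref{proposition} with $A = \KP_K(\Lambda)$ and $\G = \mathbb{Z}^k$ gives an isomorphism of categories $\KP_K(\Lambda)\text{-}\Gr \xrightarrow{\sim} (\KP_K(\Lambda)\#\mathbb{Z}^k)\text{-}\Mod$. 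Combining this with the algebra isomorphism $\KP_K(\Lambda)\#\mathbb{Z}^k \cong \KP_K(\overline{\Lambda})$ of~\eqref{ttteeett} (which induces an isomorphism of the associated module categories) yields an isomorphism of categories
\[
\Phi : \KP_K(\Lambda)\text{-}\Gr \xrightarrow{\;\sim\;} \KP_K(\overline{\Lambda})\text{-}\Mod .
\]

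The key step is to check that $\Phi$ carries graded finitely generated projective objects to finitely generated projective objects, and conversely. Both finite generation and projectivity are preserved by any isomorphism of categories: projectivity is the purely categorical lifting property (cf.\ Lemma~\ref{grprojnon}), preserved because $\Phi$ and its inverse send epimorphisms to epimorphisms and transport lifts; and finite generation is preserved because $\Phi$ and its inverse preserve subobjects and their directed unions. (Alternatively, one can argue concretely exactly as in the proof of Proposition~\ref{propformonoid}, by expanding a general element of $\Phi(P)$ in terms of the images of a finite homogeneous generating set of $P$ using the $\KP_K(\overline\Lambda)$-action.) Since $\KP_K(\Lambda)$ has graded local units and $\KP_K(\overline{\Lambda})$ has local units, the descriptions \eqref{grlocal} and \eqref{monoididempotent} identify $\VV^{\gr}(\KP_K(\Lambda))$ with the isomorphism classes of graded finitely generated projective objects of $\KP_K(\Lambda)\text{-}\Gr$, and $\VV(\KP_K(\overline{\Lambda}))$ with the isomorphism classes of finitely generated projective objects of $\KP_K(\overline{\Lambda})\text{-}\Mod$. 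Hence $\Phi$ restricts to a monoid isomorphism
\[
\VV^{\gr}(\KP_K(\Lambda)) \;\cong\; \VV(\KP_K(\overline{\Lambda})).
\]

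Finally I would invoke Corollary~\ref{cor:semigroup}, which gives $\VV(\KP_K(\overline{\Lambda})) \cong \varinjlim_{\mathbb{Z}^k}(\mathbb{N}\Lambda^0, \phi_{m,n})$, and compose the two isomorphisms to obtain the claim. Tracing generators as in Proposition~\ref{propformonoid}, a class $[\KP_K(\Lambda)v(n)]$ is carried to the class of a vertex projection $(v,m)$ of $\overline{\Lambda}$, and thence to the copy of $v$ in the corresponding copy of $\mathbb{N}\Lambda^0$; this also makes the composite $\mathbb{Z}^k$-equivariant, although equivariance is not needed for the statement. The only genuine obstacle is the preservation of finite generation under $\Phi$; once that is in hand, the result is a formal composition of the established isomorphisms.
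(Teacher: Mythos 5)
Your proposal is correct and follows essentially the same route as the paper: both pass through the category isomorphism $\KP_K(\Lambda)\-\Gr \cong \KP_K(\overline{\Lambda})\-\Mod$ obtained from Proposition~\ref{kikiki} together with the isomorphism \eqref{ttteeett}, verify that finitely generated projective objects are preserved (the paper just says to ``argue as in the directed-graph situation'', i.e.\ as in Proposition~\ref{propformonoid}), identify the monoids via \eqref{grlocal} and \eqref{monoididempotent}, and then quote Corollary~\ref{cor:semigroup}. Your explicit categorical justification for why projectivity and finite generation transfer across the isomorphism is simply a fuller account of a step the paper leaves implicit.
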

\begin{proof} Recall from \eqref{ttteeett} that $\KP_K(\overline{\Lambda}) \cong
\KP_K(\Lambda)\#\mathbb{Z}^k$. Specialising Proposition~\ref{kikiki} to Kumjian--Pask
algebras, we have the isomorphism of categories $\Psi:
KP_K(\Lambda)\-\Gr\xrightarrow[]{\sim}KP_K(\overline{\Lambda})\-\Mod$. We argue as in the
directed-graph situation that $\Psi$ preserves finitely generated projective objects. By
\eqref{monoididempotent} and \eqref{grlocal}, we have $\VV^{\gr}(\KP_K(\Lambda)) \cong
\VV(\KP_K(\overline{\Lambda}))$.
\end{proof}

\section{The graded representations of the Steinberg algebra}\label{sectionseven}
In this section,  for a $\Gamma$-graded groupoid $\mg$ and its associated Steinberg
algebra $A_R(\mg)$, we construct graded simple $A_R(\mg)$-modules. Specialising our
results to the trivial grading, we obtain irreducible representations of (ungraded)
Steinberg algebras. We determine the ideals arising from these representations and prove
that these ideals relate to the effectiveness or otherwise of the groupoid.

\subsection{Representations of a Steinberg algebra}
Let $\mg$ be an ample Hausdorff groupoid, let $\G$ be a discrete group with identity
$\varepsilon$, and let $c : \mg \to \G$ be a continuous $1$-cocycle. A subset $U$ of the
unit space $\mg^{(0)}$ of $\mg$ is \emph{invariant} if $d(\g)\in U$ implies $r(\g)\in U$;
equivalently,
\[
    r(d^{-1}(U))=U=d(r^{-1}(U)).
\]

Given an element $u\in \mg^{(0)}$, we denote by $[u]$ the smallest invariant subset of
$\mg^{(0)}$ which contains $u$. Then $$r(d^{-1}(u))=[u]=d(r^{-1}(u)).$$ That is, for any
$v\in [u]$, there exists $x\in\mg$ such that $d(x)=u$ and $r(x)=v$; equivalently, for any
$w\in [u]$, there exists $y\in\mg$ such that $d(y)=w$ and $r(y)=u$. Thus for any
$v,w\in[u]$, there exists $x\in\mg$ such that $d(x)=v$ and $r(x)=w$. We call $[u]$ an
\emph{orbit}. Observe that an invariant subset $U\subseteq \mg^{(0)}$ is an orbit if and
only if for any $v,w\in U$, there exists $x\in\mg$ such that $d(x)=v$ and $r(x)=w$.

\begin{lem}\label{disjoint}
Let $u_{1}, u_{2}, \cdots, u_{n}$ be pairwise distinct elements of $\mg^{(0)}$ with
$n\geq 2$. Then there exist disjoint compact open bisections $B_{i}\subseteq \mg^{(0)}$
such that $u_{i}\in B_{i}$ for each $i=1, \cdots, n$.
\end{lem}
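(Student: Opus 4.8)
The plan is to exploit two features of $\mg$: it is Hausdorff, and it is ample, so that its unit space $\mg^{(0)}$ is a locally compact Hausdorff space admitting a basis of compact open sets. I first note that a compact open \emph{subset} of $\mg^{(0)}$ is automatically a compact open bisection: the domain and range maps restrict to the identity on $\mg^{(0)}$, so for $B \subseteq \mg^{(0)}$ compact open we have $d|_B = r|_B = \mathrm{id}_B$, a homeomorphism onto the open set $B$. Thus it suffices to produce pairwise disjoint compact open subsets $B_1,\dots,B_n$ of $\mg^{(0)}$ with $u_i \in B_i$ for each $i$.

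Next I would separate the $u_i$ by \emph{pairwise} disjoint open sets, using only the Hausdorff property. For each ordered pair $i \neq j$, Hausdorffness provides disjoint open sets $A_{ij}, A_{ji} \subseteq \mg^{(0)}$ with $u_i \in A_{ij}$ and $u_j \in A_{ji}$. Setting
\[
    O_i = \bigcap_{j \neq i} A_{ij},
\]
each $O_i$ is a finite intersection of open neighbourhoods of $u_i$, hence an open neighbourhood of $u_i$; and for $i \neq j$ we have $O_i \subseteq A_{ij}$ and $O_j \subseteq A_{ji}$, so that $O_i \cap O_j \subseteq A_{ij} \cap A_{ji} = \emptyset$. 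Thus the $O_i$ form a family of pairwise disjoint open neighbourhoods of the $u_i$.

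Finally, I would invoke ampleness: since $\mg^{(0)}$ has a basis of compact open sets, for each $i$ there is a compact open set $B_i$ with $u_i \in B_i \subseteq O_i$. These $B_i$ are pairwise disjoint because they sit inside the pairwise disjoint $O_i$, and by the first paragraph each $B_i$ is a compact open bisection contained in $\mg^{(0)}$, as required. There is no serious obstacle here; the only points demanding a little care are the passage from the one-pair-at-a-time Hausdorff separation to a \emph{simultaneously} pairwise disjoint family (handled by the finite-intersection trick above), and the initial observation that compact open subsets of the unit space already qualify as compact open bisections.
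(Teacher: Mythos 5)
Your proof is correct and follows essentially the same route as the paper: separate the points by disjoint open subsets of $\mg^{(0)}$ using the Hausdorff property, then shrink to compact open sets inside them using ampleness. The paper's proof is just a terser version of yours, taking for granted both the finite-intersection trick for simultaneous separation and the observation that compact open subsets of $\mg^{(0)}$ are automatically bisections.
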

\begin{proof}
Since $\mg^{(0)}$ is a Hausdorff space, there exist disjoint open subsets $X_{i}$ of
$\mg^{(0)}$ such that $u_{i}\in X_{i}$ for all $i$. Since $\mg$ is ample, we can choose
compact open bisections $B_{i}\subseteq X_{i}$ such that $u_{i}\in B_{i}$ for all $i$.
\end{proof}

The \emph{isotropy group} at a unit $u$ of $\mg$ is the group  $\Iso(u) =\{\g\in\mg \mid
d(\g)=r(\g)=u\}.$ A unit $u\in\mg^{(0)}$ is called \emph{$\Gamma$-aperiodic} if $\Iso(u)
\subseteq c^{-1}(\varepsilon)$, otherwise $u$ is called \emph{$\Gamma$-periodic}. For an
invariant subset $W\subseteq \mg^{(0)}$, we denote by $W_{\rm ap}$ the collection of
$\Gamma$-aperiodic elements of $W$ and by $W_{\rm p}$ the collection of $\Gamma$-periodic
elements of $W$. Then
$$W=W_{\rm ap} \bigsqcup W_{\rm p}.$$
If $W=W_{\rm ap}$, we say that $W$ is \emph{$\Gamma$-aperiodic}; If $W=W_{\rm p}$, we say
that $W$ is \emph{$\Gamma$-periodic}.

\begin{rmk}
Let $E$ be a directed graph. Let $\mg_E$ be the associated graph groupoid and $c : \mg_E
\to \mathbb{Z}$ the canonical cocycle $c(x,m,y) = m$. It was shown in \cite{kprr} that
$c^{-1}(0)$ is a principal groupoid, in the sense that $\Iso(c^{-1}(0)) = \mg_E^{(0)}$.
Hence $x \in \mg_E^{(0)} = E^\infty$ is $\mathbb{Z}$-aperiodic if and only if $\Iso(x) =
\{x\}$. It is standard that $\Iso(x) = \{x\}$ if and only if $x \not= \mu \lambda^\infty$
for any cycle $\lambda$ in $E$. So $x$ is $\mathbb{Z}$-aperiodic if and only if $x \not= \mu
\lambda^\infty$ for any cycle $\lambda$.
\end{rmk}

\begin{lem}\label{lemmainv}
Let $W\subseteq \mg^{(0)}$ be an invariant subset. Then $W_{\rm ap}$ and $W_{\rm p}$ are
both invariant subsets of $\mg^{(0)}$.
\end{lem}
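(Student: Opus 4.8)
The plan is to show that $\Gamma$-aperiodicity is an invariant of the orbit of a unit, so that along any arrow $\g$ the endpoints $d(\g)$ and $r(\g)$ are either both $\Gamma$-aperiodic or both $\Gamma$-periodic; invariance of $W_{\rm ap}$ and $W_{\rm p}$ will then follow immediately from invariance of $W$ itself.

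The key step is to relate the isotropy groups of two units lying in the same orbit. Fix $x\in\mg$ with $d(x)=u$ and $r(x)=v$. I would check that conjugation $\theta_x:\g\mapsto x\g x^{-1}$ is a well-defined group isomorphism $\Iso(u)\to\Iso(v)$: using the composition convention (with $d(x)=x^{-1}x$ and $r(x)=xx^{-1}$), for $\g\in\Iso(u)$ the products $x\g$ and $(x\g)x^{-1}$ are composable and satisfy $d(x\g x^{-1})=r(x\g x^{-1})=v$, so $x\g x^{-1}\in\Iso(v)$, with inverse map given by conjugation by $x^{-1}$. Because $c$ is a cocycle, $c(x\g x^{-1})=c(x)c(\g)c(x)^{-1}$, and since conjugation by $c(x)$ in $\G$ fixes $\varepsilon$, we have $c(\g)=\varepsilon$ if and only if $c(x\g x^{-1})=\varepsilon$.

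Consequently $\theta_x$ restricts to a bijection between $\Iso(u)\cap c^{-1}(\varepsilon)$ and $\Iso(v)\cap c^{-1}(\varepsilon)$ and carries elements of nontrivial degree to elements of nontrivial degree. Hence $\Iso(u)\subseteq c^{-1}(\varepsilon)$ if and only if $\Iso(v)\subseteq c^{-1}(\varepsilon)$; that is, $u$ is $\Gamma$-aperiodic exactly when $v$ is. This is the assertion that aperiodicity, and therefore also periodicity, is constant on each orbit.

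Finally, to conclude, suppose $d(\g)\in W_{\rm ap}$. Since $W_{\rm ap}\subseteq W$ and $W$ is invariant, $r(\g)\in W$. As $\g$ itself connects $d(\g)$ to $r(\g)$ (take $x=\g$ above), the preceding paragraph shows $r(\g)$ is $\Gamma$-aperiodic, whence $r(\g)\in W_{\rm ap}$; thus $W_{\rm ap}$ is invariant. The identical argument, with ``aperiodic'' replaced throughout by ``periodic'', shows that $W_{\rm p}$ is invariant. I expect the only delicate point to be the bookkeeping with the groupoid composition convention when verifying that $\theta_x$ indeed lands in $\Iso(v)$; once that is settled, the cocycle computation and the final deduction are routine.
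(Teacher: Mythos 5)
Your proof is correct and follows essentially the same route as the paper: both rest on the observation that conjugation by an arrow $x$ carries $\Iso(d(x))$ isomorphically onto $\Iso(r(x))$ and, via the cocycle identity $c(x\g x^{-1})=c(x)c(\g)c(x)^{-1}$, preserves whether the degree is trivial. The paper merely phrases this contrapositively (conjugating a hypothetical element of nontrivial degree in $\Iso(v)$ back into $\Iso(u)$), whereas you set up the full isomorphism; the substance is identical.
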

\begin{proof}
For $x\in \mg$, let $u=d(x)$ and $v=r(x)$.   Suppose that $u\in W_{\rm ap}$. If  $c(y)
\not= \varepsilon$ for some $y\in \Iso(v)$, then $x^{-1}yx\in \Iso(u)$ and $\varepsilon
\not= c(y) = c(x)c(x^{-1}yx)c(x)^{-1}$, forcing $c(x^{-1}yx) \not= \varepsilon$, a
contradiction. Hence, $v=r(x)$ is $\Gamma$-aperiodic. Since $W$ is invariant,  we have
$v\in W_{\rm ap}$. So $W_{\rm ap}$ is invariant. Since $W = W_{\rm ap} \sqcup W_{\rm p}$,
it follows that $W_{\rm p}$ is also invariant.
\end{proof}

By the proof of Lemma \ref{lemmainv},  $u\in \mg^{(0)}$ is $\Gamma$-aperiodic if and only
if its orbit $[u]$ is $\Gamma$-aperiodic.

\begin{exm} \label{exmap}
In this example we construct a $\mathbb{Z}$-aperiodic invariant subset which is neither
open nor closed in $\mg^{(0)}$. Let $E$ be the following directed graph.
$$\xymatrix@C=3pc{
1\cdot \ar@(u, l)[]|{\b}
\ar@(d, l)[]|{\a}\ar[r]^{\lambda}& 2\cdot \ar@(u, r)[]|{\g}
\ar@(d, r)[]|{\delta}
}
$$
Let $u$ be the infinite path $\a\b\a^{2}\b\a^{3}\b\cdots$. Then $u$ is an element in
$\mg_{E}^{(0)}$. The orbit $[u]$ consists of all infinite paths tail equivalent to $u$.
So $\a^n u \in [u]$ for all $n \in \mathbb{N}$. The sequence $\a^n u$ converges to
$\a^\infty$, which does not belong to $[u]$. So $[u]$ is not closed. Similarly, the
points $u_n := \a\b\a^2\b\cdots\a^n\b\a^\infty$ all belong to $\mg^{(0)} \setminus [u]$,
but $u_n \to u$, so $[u]$ is not open. In particular, neither $[u]$ nor its complement is
the invariant subset of $\mg^{(0)}$ corresponding to any saturated hereditary subset of
$E^0$.
\end{exm}

We will employ $\Gamma$-aperiodic invariant subsets of $\mg^{(0)}$ to obtain graded
representations for the Steinberg algebra $A_{R}(\mg)$.  For any invariant subset
$U\subseteq \mg^{(0)}$ and a unital commutative ring $R$, we denote by $RU$ the free
$R$-module with basis $U$. For every compact open bisection $B\subseteq \mg$, there is a
function $f_{B}: \mg^{(0)}\xra RU$ which has support contained in $d(B)\cap U$ and
$f_{B}(d(\g))=r(\g)$ for all $\g\in B\cap d^{-1}(U)$. There is a unique representation
$\pi_{U}:A_{R}(\mg)\xra \End_R(RU)$ such that
\begin{equation}\label{moduleaction}
\pi_{U}(1_{B})(u)=f_{B}(u)
\end{equation}
for every compact open bisection $B$ and $u\in U$. This representation makes $RU$ an
$A_{R}(\mg)$-module (see \cite[Proposition 4.3]{bcfs}). An $A_{R}(\mg)$-submodule
$V\subseteq RU$ is called a \emph{basic submodule} of $RU$ if whenever $r \in
R\setminus\{0\}$ and $ru\in V$, we have $u\in V$. We say an $A_{R}(\mg)$-module is
\emph{basic simple} if it has no non-trivial basic submodules.

We can state one of the main results of this section.
\begin{thm} \label{basicsimple}
Let $U$ be an invariant subset of $\mg^{(0)}$. Then $U$ is a $\Gamma$-aperiodic orbit if
and only if $RU$ is a graded basic simple $A_{R}(\mg)$-module. Furthermore, $RU$ is a
graded basic simple $A_{R}(\mg)$-module if and only if it is graded and basic simple.
\end{thm}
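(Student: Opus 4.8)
The plan is to prove the main equivalence first and then read off the ``furthermore'' clause as a formal consequence. For the forward implication, suppose $U$ is a $\G$-aperiodic orbit. I would first put a grading on $RU$ by fixing a basepoint $u_0 \in U$ and, for each $v \in U$, setting $\deg(v) := c(x)$ for any $x \in \mg$ with $d(x) = u_0$ and $r(x) = v$; such an $x$ exists because $U$ is an orbit, and $\deg(v)$ is well defined because any two choices differ by an element of $\Iso(u_0) \subseteq c^{-1}(\varepsilon)$. Declaring $(RU)_\delta := R\{v \in U : \deg(v) = \delta\}$ gives $RU = \bigoplus_{\delta}(RU)_\delta$, and the composition formula (for $d(\eta)=v$ one gets $\deg(r(\eta)) = c(\eta)\deg(v)$) shows that $A_R(\mg)_\alpha (RU)_\delta \subseteq (RU)_{\alpha\delta}$, so $RU$ is graded. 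To see that $RU$ is basic simple I would take a nonzero basic submodule $V$, pick $m \in V \setminus \{0\}$ and a point $v \in \supp(m)$, and use Lemma~\ref{disjoint} together with Hausdorffness to choose a compact open bisection $B \subseteq \mg^{(0)}$ with $B \cap \supp(m) = \{v\}$; then $\pi_U(1_B)(m)$ is a nonzero scalar multiple of $v$ lying in $V$, so $v \in V$ because $V$ is basic. Transitivity of the orbit then moves $v$ to every other $w \in U$ via $\pi_U(1_{B'})$ for a bisection $B'$ containing some $x$ with $d(x)=v$, $r(x)=w$, giving $V = RU$. In particular $RU$ is graded and basic simple, hence graded basic simple.

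For the reverse implication I assume $RU$ is graded basic simple and must recover that $U$ is a $\G$-aperiodic orbit. The technical heart is the following support lemma: for every $u \in U$, the homogeneous components of $u$ in $RU = \bigoplus_\delta (RU)_\delta$ are all supported at $\{u\}$. To prove it, observe that for any compact open bisection $B \subseteq \mg^{(0)}$ with $u \in B$, the operator $\pi_U(1_B)$ is the $R$-linear projection of $RU$ onto $R(B \cap U)$, and since $1_B \in A_R(\mg)_\varepsilon$ this projection is homogeneous of degree $\varepsilon$ and therefore commutes with the projections onto the graded components. Writing $u = \sum_\delta m_\delta$, applying $\pi_U(1_B)$ and using $\pi_U(1_B)(u) = u$ yields $m_\delta = \pi_U(1_B)(m_\delta) \in R(B \cap U)$, so $\supp(m_\delta) \subseteq B$. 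As $\mg^{(0)}$ is Hausdorff and ample, the compact open bisections $B \subseteq \mg^{(0)}$ containing $u$ intersect in $\{u\}$, forcing $\supp(m_\delta) \subseteq \{u\}$. I would emphasise that this argument is valid over an arbitrary commutative ring $R$, which is precisely why I route through supports rather than trying to show outright that basis elements are homogeneous.

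With the support lemma in hand, the homogeneous components of an arbitrary element of $RU$ are supported within that element's own support, so for any invariant $W \subseteq U$ the submodule $RW$ is automatically graded (and clearly basic). If $U$ were not a single orbit it would contain a proper nonempty invariant subset $[u]$, and $R[u]$ would then be a proper nonzero graded basic submodule, contradicting graded basic simplicity; hence $U$ is an orbit. To see that $U = [u]$ is $\G$-aperiodic, suppose some $\g \in \Iso(u)$ had $c(\g) = \alpha \neq \varepsilon$; choosing a compact open bisection $B \ni \g$ with $B \subseteq c^{-1}(\alpha)$ gives $\pi_U(1_B)(u) = r(\g) = u$, and writing $u = \sum_\delta m_\delta$ with each $m_\delta \in Ru$ (support lemma) one gets $m_\delta = \pi_U(1_B)(m_\delta) \in A_R(\mg)_\alpha (RU)_\delta \subseteq (RU)_{\alpha\delta}$, so each $m_\delta \in (RU)_\delta \cap (RU)_{\alpha\delta} = \{0\}$, forcing $u = 0$, a contradiction. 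Thus $U$ is a $\G$-aperiodic orbit, completing the main equivalence. For the ``furthermore'' clause, the implication ``graded and basic simple $\Rightarrow$ graded basic simple'' is immediate since every graded basic submodule is in particular basic; conversely a graded basic simple $RU$ is a $\G$-aperiodic orbit by the reverse implication, and then the forward implication shows it is both graded and basic simple. The main obstacle throughout is the reverse direction, and within it the support lemma, where Hausdorffness and ampleness must be combined to localise the homogeneous components of each unit.
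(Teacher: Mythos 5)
Your proof is correct. The forward direction is essentially identical to the paper's: you construct the grading on $RU$ by fixing a basepoint and transporting degrees along the orbit (the paper sets $[u]_\g = \{v : \exists x,\ c(x)=\g,\ d(x)=u,\ r(x)=v\}$, which is your $\deg$ function), and the basic-simplicity argument via Lemma~\ref{disjoint} and transitivity is the same. Where you genuinely diverge is the reverse direction. The paper only extracts, for each $u \in U$, a single nonzero scalar $r$ with $ru$ homogeneous (by expanding $u$ into homogeneous elements, locating $u$ in the support of one of them, and hitting it with $1_{B'}$ for a small unit-space bisection $B'$), and then handles the orbit condition by appealing back to the forward implication to assert that $R[u]$ is a graded basic submodule. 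Your ``support lemma'' --- that \emph{every} homogeneous component of $u$ lies in $Ru$, obtained by intersecting the degree-$\varepsilon$ projections $\pi_U(1_B)$ over all compact open $B \subseteq \mg^{(0)}$ containing $u$ --- is a strengthening of the paper's claim that buys you two things: the aperiodicity contradiction ($m_\delta \in (RU)_\delta \cap (RU)_{\a\delta} = 0$) falls out cleanly, and, more importantly, you can verify directly that $R[u]$ is a graded submodule \emph{with respect to the ambient grading on $RU$}, rather than merely gradable in its own right. The paper's appeal to the forward direction at that point is slightly loose on exactly this issue (the grading constructed there on $R[u]$ is not a priori the restriction of the given one), so your route is both different and more airtight; the paper's is marginally shorter. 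The ``furthermore'' clause is handled the same way in both.
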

\begin{proof}
Suppose that $u \in \mg^{(0)}$ satisfies $U=[u]$, and that $[u]$ is a $\Gamma$-aperiodic
orbit. We first show that $R[u]$ is a $\G$-graded $A_{R}(\mg)$-module. For any $\g\in\G$,
set
\[
[u]_{\g}=\{v\in [u] \mid \text{ there exists } x\in\mg \text{ such that } c(x) = \g, d(x)=u\text{ and }r(x)=v\}.
\]
We claim that $[u]_{\g}\cap [u]_{\g'} \not= \emptyset$ implies $\g = \g'$. Indeed, if
$v\in [u]_{\g}\cap [u]_{\g'}$, then there exist $x\in c^{-1}(\g)$ and $y\in c^{-1}(\g')$
such that $d(x)=d(y)=u$ and $r(x)=r(y)=v$. Now $x^{-1}y\in \Iso(u)$. Since $u$ is
$\Gamma$-aperiodic this forces $\gamma^{-1}\gamma' = c(x^{-1}y) = \varepsilon$, and so
$\g = \g'$. This gives a partition $[u]=\sqcup_{\g\in\G} [u]_{\g}$. Therefore
$A_{R}(\mg)$-module $R[u]$ has a decomposition of $R$-modules
\begin{equation*}
R[u]=\bigoplus_{\g\in\G}(R[u])_{\g},
\end{equation*} where $(R[u])_{\g}$ is a free $R$-module with basis $[u]_{\g}$.

We show that $A_{R}(\mg)_{\a}\cdot (R[u])_{\g}\subseteq (R[u])_{\a\g}$, for $\a,\g\in
\G$. Fix $v \in [u]_\gamma$ and $B \in B^{\rm co}_\a(\mg)$. We  use $\cdot$ to denote the
action of $A_{R}(\mg)$ on $RU$. We have
\begin{equation*}
1_{B}\cdot v=\begin{cases} r(b), & \text{ if $b \in B$ satisfies $d(b) = v$};\\
0,&\text{ if $v \not\in d(B)$.}
\end{cases}
\end{equation*}
Clearly $0 \in (R[u])_{\a\g}$, so suppose that $b \in B$ satisfies $d(b) = v$. Since
$v\in [u]_{\g}$, there exists $x\in\mg$ such that $c(x) = \g$, $d(x)=u$, and $r(x)=v$.
Now $d(bx)=u$, $r(bx)=r(b)$, and $c(bx) = c(b)c(x) = \a\g$. So $r(b)\in [u]_{\a\g}$.
Since elements of the form $1_B$ where $B \in B^{\rm co}_\a(\mg)$ span $A_{R}(\mg)_{\a}$,
we deduce that $A_{R}(\mg)_{\a}\cdot (R[u])_{\g}\subseteq (R[u])_{\a\g}$ as claimed.

Next we show that $R[u]$ is a basic simple $A_R(\mg)$-module. Suppose that $V\neq 0$ is a
basic $A_{R}(\mg)$-submodule of $R[u]$. Take a nonzero element $x\in V$. Fix nonzero
elements $r_i \in R$ and pairwise distinct $u_i \in [u]$ such that
$x=\sum_{i=1}^{m}r_{i}u_{i}$. By Lemma~\ref{disjoint}, there exist disjoint compact open
bisections $B_{i}\subseteq \mg^{(0)}$ such that $u_{i}\in B_{i}$ for all $i=1, \cdots,
m$. Now
\begin{equation*}
1_{B_{1}}\cdot x
    = 1_{B_{1}}\cdot \sum_{i=1}^{m}r_{i}u_{i}
    = \sum_{i=1}^{m}r_{i}(1_{B_{1}}\cdot u_{i})
    = r_{1}f_{B_{1}}(u_{1}).
\end{equation*}
Thus $u_1=f_{B_{1}}(u_{1})\in V$, because $V$ is a basic submodule. Fix $v \in [u]$ and
choose $x\in\mg$ such that $d(x)=u_1$ and $r(x)=v$. Fix a compact open bisection $D$
containing $x$. Then $1_{D}\cdot u_1=f_{D}(u_1)=r(x)=v\in V$, giving $V=R[u]$. Thus
$R[u]$ is basic simple, and consequently graded basic simple.

For the converse suppose that $RU$ is a graded basic simple $A_R(\mg)$-module. We first
show that $U$ is $\Gamma$-aperiodic. Let $u\in U$. We claim that there exists $r \in
R\setminus\{0\}$ such that $ru$ is a homogeneous element of $RU$. To see this, express
$u=\sum_{i=1}^{l}h_{i}$, where $h_{i}\neq u$ are homogeneous elements. For each $i$,
express $h_{i}=\sum_{j=1}^{s_{i}}\lambda_{ij}u_{ij}$ with $\lambda_{ij} \in
R\setminus\{0\}$ and the $u_{ij}\in U$ pairwise distinct. We first show that $u \in
\{u_{ij} \mid  i=1, \cdots, l; \; j=1,\cdots, s_{i}\}$; for if not, then
Lemma~\ref{disjoint} gives compact open bisections $B, B_{ij}$ such that $u\in B$ and
$u\notin B_{ij}$ for all $i, j$. So $1_B \cdot u\neq 0$, whereas
\[
1_{B}\cdot u
    = 1_{B}\cdot \sum_{i=1}^{l}h_{i}
    = 1_{B}\cdot\sum_{i=1}^{l}\sum_{j=1}^{s_{i}}\lambda_{ij}u_{ij}
    = \sum_{i=1}^{l}\sum_{j=1}^{s_{i}}\lambda_{ij}1_{B}\cdot u_{ij}=0.
\]
This is a contradiction. So $u = u_{ij}$ for some $i,j$ as claimed; without loss of
generality, $u = u_{11}$. Hence $h_{1} = \lambda_{11}u +
\sum_{j=2}^{s_{1}}\lambda_{1j}u_{1j}$. There exist compact open bisections $B', B_{1j}'
\subseteq \mg^{(0)}\subseteq c^{-1}(\varepsilon)$ such that $u\in B'$ but $u\notin
B_{1j}'$ for $j\neq 1$. Hence $r := \lambda_{11}$ belongs to $R \setminus \{0\}$, and
$$r u = \lambda_{11}1_{B'} \cdot u = 1_{B'}\cdot h_{1}$$ is homogeneous as claimed. Now
suppose that $u$ is not $\Gamma$-aperiodic. Then there exists $x\in \Iso(u)$ with $c(x)
\not= \varepsilon$. Fix $D \in B^{\rm co}_{c(x)}(\mg)$ containing $x$. Then $1_{D}\cdot
ru = r 1_D\cdot u = ru$ is homogeneous. Thus $1_D \in A_R(\mg)_\varepsilon$, forcing
$c(x) = \varepsilon$. This is a contradiction. Thus $U$ is $\Gamma$-aperiodic.

For the last part of the theorem we prove that $U$ is an orbit. If not then there exist
$u,v \in \mg^{(0)}$ with $[u] \cap [v] = \emptyset$ and $[u] \sqcup [v] \subseteq U$. Hence
$R[u] \subseteq RU \setminus R[v]$ is a nontrivial proper graded basic submodule of $RU$
by the first part of the theorem. This is a contradiction. So $U$ is an orbit. The last
statement of the theorem follows from the first part of the proof.
\end{proof}

\begin{cor} \label{corforsimple}
Let $\mg$ be an ample Hausdorff groupoid. $U$ be an invariant subset of $\mg^{(0)}$. Then
$U$ is an orbit of $\mg^{(0)}$ if and only if $RU$ is a basic simple $A_{R}(\mg)$-module.
\end{cor}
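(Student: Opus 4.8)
The plan is to derive this corollary directly from Theorem~\ref{basicsimple} by specialising to the trivial grading. First I would equip $\mg$ with the trivial cocycle $c: \mg \to \{\varepsilon\}$ into the trivial group $\G = \{\varepsilon\}$. As recorded in Subsection~\ref{subsetion32}, any ample Hausdorff groupoid admits this trivial cocycle, and it induces the trivial grading on $A_R(\mg)$, so Theorem~\ref{basicsimple} applies in this setting.

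Next I would observe the two simplifications that occur under the trivial grading. The first is that every unit $u \in \mg^{(0)}$ is automatically $\Gamma$-aperiodic: since $c$ is the constant map, $\Iso(u) \subseteq \mg = c^{-1}(\varepsilon)$ holds trivially, so the defining condition $\Iso(u) \subseteq c^{-1}(\varepsilon)$ is satisfied. Consequently every invariant subset $U$ is $\Gamma$-aperiodic, and the phrase ``$\Gamma$-aperiodic orbit'' appearing in Theorem~\ref{basicsimple} collapses to simply ``orbit''. The second simplification is that any $A_R(\mg)$-module $RU$ carries only the trivial $\{\varepsilon\}$-grading $RU = (RU)_\varepsilon$, so every $A_R(\mg)$-submodule is homogeneous; in particular a basic submodule is automatically a graded basic submodule, and the notion of ``graded basic simple'' reduces to ``basic simple''.

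Finally I would invoke Theorem~\ref{basicsimple} verbatim with $\G = \{\varepsilon\}$, which asserts that $U$ is a $\Gamma$-aperiodic orbit if and only if $RU$ is a graded basic simple $A_R(\mg)$-module. Under the two identifications above, this is precisely the statement that $U$ is an orbit if and only if $RU$ is a basic simple $A_R(\mg)$-module, which is the corollary. I do not anticipate any genuine obstacle: the entire content is the specialisation of Theorem~\ref{basicsimple} to the trivial group, and the only point requiring a word of justification is that the trivial grading is admissible, which is exactly the observation that $\mg$ admits the trivial cocycle into $\{\varepsilon\}$.
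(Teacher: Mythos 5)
Your proposal is correct and is exactly the paper's own proof: the authors simply apply Theorem~\ref{basicsimple} with the trivial cocycle $c : \mg \to \{\varepsilon\}$, and your two observations (every unit is $\Gamma$-aperiodic and graded basic simplicity reduces to basic simplicity under the trivial grading) are the implicit justifications for that one-line argument.
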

\begin{proof}
Apply Theorem~\ref{basicsimple} with $c : \mg \to \{\varepsilon\}$ the trivial grading.
\end{proof}

Specialising Theorem~\ref{basicsimple} to the case of Leavitt path algebras we obtain
irreducible representations for these algebras.

Let $K$ be a field. For an infinite path $p$ in a graph $E$, Chen constructed the left
$L_K(E)$-module $\mathcal{F}_{[p]}$ of the space of infinite paths tail-equivalent to $p$
and proved that it is an irreducible representation of the Leavitt path algebra (see
\cite[Theorem 3.3]{c}). These were subsequently called Chen simple modules and further
studied in~\cite{abramsman,araranga,araranga2,hr,ranga}. In the groupoid setting, the
infinite path $p$ is an element in $\mg_{E}^{(0)}$. Thus $q$ belongs to the orbit $[p]$
if and only if $q$ is tail-equivalent to $p$. Applying Corollary \ref{corforsimple}, we
immediately obtain that $K[p]=\mathcal{F}_{[p]}$ is an irreducible representation of the
Leavitt path algebra. Furthermore, by Theorem~\ref{basicsimple}, $p$ is an aperiodic
infinite path (irrational path) if and only if $\mathcal{F}_{[p]}$ is a graded module
(see \cite[Proposition 3.6]{hr}).

Recall from \cite[Theorem 3.3]{c} that $\End_{L_{K}(E)}(\mathcal{F}_{[p]})\cong K$. We
claim that $\End_{A_{R}(\mg)}(R[u])\cong R$ for $u\in \mg_{E}^{(0)}$. Indeed, let $f :
R[u] \xra R[u]$ be a nonzero homomorphism of $A_{R}(\mg)$-modules. Then ${\rm Ker} f$ is
a basic submodule of $R[u]$. Since $R[u]$ is basic simple, we deduce that $f$ is
injective. For $v\in [u]$, we write $f(v)=\sum_{i=1}^{n}r_{i}v_{i}$ with $0\neq r_{i}\in
R$ and $v_{i}$ are distinct. We prove that $n=1$ and $v=v_{1}$. For if not, then we may
assume that $v\neq v_{1}$. By Lemma \ref{disjoint}, there exist disjoint compact open
bisections $B, B_{1}\subseteq \mg^{(0)}$ such that $v\in B$, $v_{1}\in B_{1}$ and
$v_{i}\notin B_{1}$ for $i\neq 1$. Then $1_{B_{1}}\cdot f(v)=f(1_{B_{1}}\cdot v)=0$. But,
$1_{B_{1}}\cdot f(v)=1_{B_{1}}\cdot \sum_{i=1}^{n}r_{i}v_{i}=r_{1}v_{1}$ which is a
contradiction.

Likewise, Theorem~\ref{basicsimple} specialises to $k$-graph groupoids, giving new
information about Kumjian--Pask algebras.

\begin{cor}
Let $\Lambda$ be a row-finite $k$-graph without sources and $\KP_K(\Lambda)$ the
Kumjian--Pask algebra of $\Lambda$. Then
\begin{enumerate}[\upshape(1)]
\item for an infinite path $x\in \Lambda^\infty$, $K[x]$ is a simple left
    $\KP_K(\Lambda)$-module;
\item for $x,y \in \Lambda^\infty$, we have $K[x]\cong K[y]$  if and only if $x\sim
    y$; and
\item for $x \in \Lambda^\infty$, $K[x]$ is a graded module if and only if $x$ is an
    aperiodic path.
\end{enumerate}
\end{cor}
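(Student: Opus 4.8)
The plan is to specialise the general machinery of Theorem~\ref{basicsimple} and Corollary~\ref{corforsimple} to the infinite-path groupoid $\mg_\Lambda$, using the realisation $\KP_K(\Lambda)\cong A_K(\mg_\Lambda)$ together with the cocycle $c(x,m,y)=m$ into $\mathbb{Z}^k$. The first thing I would record is that the orbit $[x]$ of $x\in\mg_\Lambda^{(0)}=\Lambda^\infty$ is exactly the tail-equivalence class of $x$: by the groupoid structure, $v\in[x]$ iff $(v,n,x)\in\mg_\Lambda$ for some $n$, i.e.\ iff $v(l,\infty)=x(m,\infty)$ for some $l,m$, which is precisely $v\sim x$. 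Granting this, part~(1) is immediate: $[x]$ is an orbit, so Corollary~\ref{corforsimple} says $K[x]$ is a basic simple $A_K(\mg_\Lambda)$-module, and since $K$ is a field every submodule of $K[x]$ is automatically basic (if $r\neq0$ and $ru\in V$ then $u=r^{-1}(ru)\in V$), so basic simple is the same as simple.

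For part~(2), the forward implication is trivial: if $x\sim y$ then $[x]=[y]$, so $K[x]=K[y]$ as modules. For the converse I would argue by contradiction using a separation trick. Suppose $\theta:K[x]\to K[y]$ is an isomorphism but $x\not\sim y$, so $[x]\cap[y]=\emptyset$. Pick $v\in[x]$; since $\theta$ is injective, $\theta(v)=\sum_{j=1}^n r_j w_j\neq0$ with the $w_j\in[y]$ pairwise distinct and each $r_j\in K\setminus\{0\}$, and $n\geq1$. Because $[x]\cap[y]=\emptyset$ each $w_j\neq v$, so $v,w_1,\dots,w_n$ are $n+1\geq2$ distinct points of $\mg_\Lambda^{(0)}$. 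By Lemma~\ref{disjoint} there are disjoint compact open bisections $B,B_1,\dots,B_n\subseteq\mg_\Lambda^{(0)}$ with $v\in B$ and $w_j\in B_j$; in particular $v\notin B_1$. Then in $K[x]$ we have $1_{B_1}\cdot v=0$, whereas in $K[y]$ we have $1_{B_1}\cdot\theta(v)=r_1w_1\neq0$, contradicting $\theta(1_{B_1}\cdot v)=1_{B_1}\cdot\theta(v)$.

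For part~(3), I would again use that $[x]$ is always an orbit, so $K[x]$ is always basic simple by part~(1). The last sentence of Theorem~\ref{basicsimple} then gives that $K[x]$ is graded if and only if it is graded basic simple, which by the main equivalence of that theorem holds if and only if $[x]$ is a $\mathbb{Z}^k$-aperiodic orbit. By the remark following Lemma~\ref{lemmainv}, $[x]$ is $\mathbb{Z}^k$-aperiodic iff $x$ itself is $\mathbb{Z}^k$-aperiodic, i.e.\ $\Iso(x)\subseteq c^{-1}(0)$. It remains to match this with $k$-graph aperiodicity: an element of $\Iso(x)$ has the form $(x,l-m,x)$ with $x(l,\infty)=x(m,\infty)$ and cocycle value $l-m$, so $\Iso(x)\subseteq c^{-1}(0)$ says exactly that $x(l,\infty)=x(m,\infty)$ forces $l=m$, which is the definition of $x$ being aperiodic. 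The main obstacle is the converse in part~(2): establishing that non-tail-equivalent paths yield non-isomorphic modules genuinely requires the bisection-separation argument above rather than a formal consequence of the earlier theorems; the remaining verifications in (1) and (3) are essentially definition-chasing once the orbit/tail-equivalence and aperiodicity dictionaries are set up.
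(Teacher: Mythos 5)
Your proposal is correct. Parts (1) and (3) follow the paper's route exactly: (1) is Corollary~\ref{corforsimple} applied to the orbit $[x]$, which you correctly identify with the tail-equivalence class, together with the (implicit in the paper, explicit in your write-up) observation that over a field every submodule of $K[x]$ is basic; (3) is Theorem~\ref{basicsimple} plus the dictionary $\Iso(x)\subseteq c^{-1}(0)$ if and only if $x$ is an aperiodic path, which you spell out correctly via $\Iso(x)=\{(x,l-m,x)\mid x(l,\infty)=x(m,\infty)\}$. The only genuine divergence is in the converse direction of (2). The paper writes $\phi(x)=\sum_i r_i y_i$ with the $y_i\sim y$ pairwise distinct, disposes of the case $x=y_i$ by transitivity of $\sim$, and otherwise chooses $n\in\mathbb{N}^k$ so that $x(0,n)$ and the $y_i(0,n)$ are pairwise distinct; applying the ghost path $a^*$ with $a=y_1(0,n)$ then yields $0=\phi(a^*x)=a^*\phi(x)=r_1\,y_1(n,\infty)\neq 0$. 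You instead assume $x\not\sim y$, so that $[x]\cap[y]=\emptyset$, and separate $v$ from $w_1,\dots,w_n$ by disjoint compact open bisections inside the unit space (Lemma~\ref{disjoint}), obtaining $0=\theta(1_{B_1}\cdot v)=1_{B_1}\cdot\theta(v)=r_1w_1\neq 0$. These are the same separation idea implemented at different levels: the paper's version uses the $k$-graph generators and the easy fact that finitely many distinct infinite paths are distinguished by a common finite initial segment, while yours is purely groupoid-theoretic, needs no case split, and in fact proves the more general statement that for any ample Hausdorff groupoid and any unital commutative coefficient ring, an isomorphism $R[u]\cong R[v]$ of the orbit modules forces $[u]=[v]$.
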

\begin{proof}
For (1), the equivalence class of $x$ is the orbit of $\mg_{\Lambda}^{(0)}$ which
contains $x$. By \eqref{moduleaction} and Corollary \ref{corforsimple}, the statement
follows directly.  For (2), let $\phi: F([x])\rightarrow F([y])$ be an isomorphism. Write
$\phi(x)=\sum_{i=1}^l r_i y_i$, where $y_i\sim y$ are all distinct. If $x=y_i$, for some
$i$, then by transitivity of $\sim$, $x\sim y$ and we are done. Otherwise one can choose
$n\in \mathbb N^k$ such that all $y_i(0,n)$ and $x(0,n)$ are distinct. Setting
$a=y_1(0,n)$, we have $0=\phi(a^*x)=a^*\phi(x)=y_1(n,\infty)$, which is not possible
unless $x=y_1$ and $l=1$. This gives that $x\sim y$. The converse is clear. The statement
(3) follows immediately by Theorem \ref{basicsimple}.
\end{proof}

\subsection{The annihilator ideals and effectiveness of groupoids} \label{subsection72}
In this section, we describe the annihilator ideals of the graded modules over a
Steinberg algebra and prove that these ideals reflect the effectiveness of the groupoid.

As in previous sections, we assume that $\mg$ is a $\G$-graded ample Hausdorff groupoid
which has a basis of graded compact open bisections. Let $R$ be a commutative ring with
identity and $A_R(\mg)$ the $\G$-graded Steinberg algebra associated to $\mg$.

Let $W\subseteq \mg^{(0)}$ be an invariant subset.  We write $\mg_{W} := d^{-1}(W)$ which
coincides with the restriction $\mg|_{W}=\{x\in\mg \mid  d(x)\in W,  r(x)\in W\}$. Notice
that  $\mg_{W}$ is a groupoid with unit space $W$.

Observe that the interior $W^\circ$ of an invariant subset $W$ is invariant. Indeed,
$r(d^{-1}(W^\circ))$ is an open subset of $\mg^{(0)}$, since $W^\circ$ is an open subset
of $\mg^{(0)}$. Since $W$ is invariant, $r(d^{-1}(W^\circ))\subseteq W$. Thus
$r(d^{-1}(W^\circ))\subseteq W^\circ$. It follows that the closure $W^{-}$ of $W$ is also
an invariant subset of $\mg^{(0)}$, since $W^{-}=\mg^{(0)}\setminus (\mg^{(0)}\setminus
W)^\circ$.

Recall from \eqref{moduleaction} that $$\pi_{W}:A_{R}(\mg)\longrightarrow \End_R(RW)$$
makes $RW$ an $A_{R}(\mg)$-module.

\begin{lem}\label{lemmaann}
Let $W\subseteq \mg^{(0)}$ be an invariant subset of the unit space of $\mg$, and let $U
= (\mg^{(0)}\setminus W)^\circ$. Then
$$A_{R}(\mg_{U})\subseteq \Ann_{A_{R}(\mg)}(RW).$$
\end{lem}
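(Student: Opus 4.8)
The plan is to reduce the statement to the explicit description of the module action in~\eqref{moduleaction} and then to exploit that $U$ is disjoint from $W$. First I would observe that $U=(\mg^{(0)}\setminus W)^\circ$ is an \emph{open invariant} subset of $\mg^{(0)}$: invariance is symmetric in $d$ and $r$, so the complement $\mg^{(0)}\setminus W$ of the invariant set $W$ is again invariant, and by the remark preceding the lemma the interior of an invariant set is invariant. Since $U$ is open and invariant, $\mg_U=d^{-1}(U)$ really is an open subgroupoid (the invariance of $U$ is exactly what makes $d^{-1}(U)$ closed under inversion, so that $\mg_U=\{x\in\mg\mid d(x),r(x)\in U\}$ is a groupoid). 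Being open in $\mg$, it is itself an ample Hausdorff groupoid, every compact open bisection $B$ of $\mg_U$ is a compact open bisection of $\mg$, and extension of functions by zero identifies $A_R(\mg_U)$ with the subalgebra of $A_R(\mg)$ spanned by $\{1_B\mid B\in B_*^{\rm co}(\mg_U)\}$.

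With this identification in hand, since $A_R(\mg_U)$ is spanned by the $1_B$ with $B\subseteq\mg_U$ a compact open bisection and $RW$ is spanned by its basis $W$, it suffices by linearity to show that $1_B\cdot u=0$ for every such $B$ and every $u\in W$. By~\eqref{moduleaction} we have $1_B\cdot u=f_B(u)$, where $f_B$ has support contained in $d(B)\cap W$. But $B\subseteq\mg_U$ forces $d(B)\subseteq U\subseteq\mg^{(0)}\setminus W$, so $d(B)\cap W=\emptyset$ and hence $f_B$ vanishes identically on $W$. Therefore $1_B\cdot u=0$, and every generator of $A_R(\mg_U)$ annihilates $RW$, giving $A_R(\mg_U)\subseteq\Ann_{A_R(\mg)}(RW)$.

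No step in this argument is a genuine obstacle; the only point demanding care is the identification of $A_R(\mg_U)$ with the subalgebra of $A_R(\mg)$ generated by the characteristic functions of compact open bisections contained in $\mg_U$. This is legitimate precisely because $U$, and hence $\mg_U$, is open, so that $C_c(\mg_U,R)\hookrightarrow C_c(\mg,R)$ by extension by zero. Once that embedding is fixed, the disjointness $U\cap W=\emptyset$ makes the action of each generator of $A_R(\mg_U)$ on the basis $W$ vanish, and linearity completes the proof.
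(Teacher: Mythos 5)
Your proof is correct and follows essentially the same route as the paper: both arguments expand an element of $A_R(\mg_U)$ as an $R$-linear combination of characteristic functions $1_B$ with $B\subseteq\mg_U$ a compact open bisection, and then use $d(B)\subseteq U\subseteq\mg^{(0)}\setminus W$ to see that the action~\eqref{moduleaction} kills every basis element of $RW$. Your extra remarks on the openness and invariance of $U$ and the extension-by-zero embedding $A_R(\mg_U)\hookrightarrow A_R(\mg)$ are points the paper leaves implicit, but they do not change the argument.
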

\begin{proof}
For any $f\in A_{R}(\mg_{U})$, we write $f=\sum_{k=1}^{m}r_{k}1_{B_{k}}$  with
$B_{k}\subseteq \mg_{U}$ compact open bisections of $\mg$ and $r_{k}\in R$ nonzero
scalars. Since $d(B_{k})\subseteq U$, we have $d(B_{k})\cap W=\emptyset$. Thus $f\cdot
w=0$ for any $w\in W$, and hence $f\in \Ann_{A_{R}(\mg)}(RW)$.
\end{proof}

From now on, $W\subseteq \mg^{(0)}$ is a $\Gamma$-aperiodic invariant  subset. We have
$$W=\bigcup_{u\in W}[u].$$
Of course, two elements of $W$ may belong to the same orbit.

Recall from Theorem~\ref{basicsimple} that if $u\in\mg^{(0)}$ is $\Gamma$-aperiodic, then
$R[u]$ is a $\G$-graded $A_{R}(\mg)$-module.  Therefore $RW$ is a $\G$-graded
$A_{R}(\mg)$-module. In order to construct graded representations for $A_R(\mg)$, we need
to consider the ``closed'' subgroups of $\End_{R}(FW)$ defined in~(\ref{moduleaction}).
Namely, we consider the subgroup $\END_R(RW)=\bigoplus_{\g\in \G}{\rm Hom}_{R}(RW,
RW)_{\g}$, where each component $\Hom_{R}(RW, RW)_{\g}$ consists of $R$-maps of degree
$\g$.

Then the map
\begin{equation}\label{gradedhom}
\pi_{W}: A_{R}(\mg)\longrightarrow \END_R(RW)
\end{equation}
given by the $A_{R}(\mg)$-module action is a homomorphism of $\G$-graded algebras. To
prove that $\pi_{W}$ preserves the grading, fix $\a\in \G$ and $B \in B^{\rm
co}_\a(\mg)$. Take $u \in W$ and $v\in [u]$. Fix $x \in \mg$ with $d(x) = u$ and $r(x) =
v$, and put $\b = c(x)$ so that $v \in [u]_\b$. Then
\begin{equation*}
\pi_{W}(1_{B})(v)=
    \begin{cases}
        r(\g) & \text{if } v=d(\g) \text{ for some } \g\in B;\\
        0 & \text{otherwise}.
    \end{cases}
\end{equation*}
Since $c(\g x) = \a\b$, we obtain $\pi_{W}(1_{B})\in \Hom_{R}(RW, RW)_{\a}$.

Recall that an ample Hausdorff groupoid $\mg$ is \emph{effective} if
$\Iso(\mg)^\circ=\mg^{(0)}$, where $\Iso(\mg)=\bigsqcup_{u\in\mg^{(0)}}\Iso(u)$. It
follows that $\mg$ is effective if  and only if for any nonempty $B \in B^{\rm
co}_*(\mg)$ with $B \cap \mg^{(0)} = \emptyset$, we have $B \not \subseteq \Iso(\mg)$
(see~\cite[Lemma~3.1]{bcfs} for other equivalent conditions).

We need the following graded uniqueness theorem for Steinberg algebras established
in~\cite[Theorem 3.4]{cm}.

\begin{lem} \label{gut}
Let $\mg$ be a $\Gamma$-graded ample Hausdorff groupoid such that $c^{-1}(\varepsilon)$
is effective. If $\pi:A_{R}(\mg)\xra A$ is a graded $R$-algebra homomorphism with
$\Ker(\pi)\neq 0$ then there is a compact open subset $B\subseteq \mg^{(0)}$ and $r\in
R\setminus \{0\}$ such that $\pi(r1_{B})=0$.
\end{lem}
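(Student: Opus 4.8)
The plan is to reduce this graded uniqueness statement to a separation argument taking place inside the effective groupoid $c^{-1}(\varepsilon)$. Since $\pi$ is a homomorphism of $\G$-graded algebras, $\Ker(\pi)$ is a graded ideal, and as it is nonzero it contains a nonzero homogeneous element $f$, say $f\in A_R(\mg)_\gamma$. Using \eqref{express} I would write $f=\sum_{i=1}^{n}a_i 1_{U_i}$ with $a_i\in R\setminus\{0\}$ and $U_1,\dots,U_n\in B^{\rm co}_\gamma(\mg)$ mutually disjoint.

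The first key step is to straighten $f$ onto the unit space. Since $U_1^{-1}\in B^{\rm co}_{\gamma^{-1}}(\mg)$, the element $g:=1_{U_1^{-1}}f=\sum_{i=1}^{n}a_i 1_{U_1^{-1}U_i}$ lies in $A_R(\mg)_\varepsilon$, because each $U_1^{-1}U_i\in B^{\rm co}_\varepsilon(\mg)$. As $U_1$ is a bisection we have $U_1^{-1}U_1=d(U_1)\subseteq\mg^{(0)}$, while for $i\ne 1$ the disjointness of the $U_i$ forces $U_1^{-1}U_i\cap\mg^{(0)}=\emptyset$. Hence the restriction of $g$ to $\mg^{(0)}$ equals $a_1 1_{d(U_1)}\ne 0$, so $g\ne 0$, whereas $\pi(g)=\pi(1_{U_1^{-1}})\pi(f)=0$. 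Identifying $A_R(\mg)_\varepsilon$ with $A_R(c^{-1}(\varepsilon))$ (the compact open bisections of the open subgroupoid $c^{-1}(\varepsilon)$ are exactly the $\varepsilon$-graded ones of $\mg$), this exhibits a nonzero element $g$ of $A_R(c^{-1}(\varepsilon))$ in the kernel of $\pi$.

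The heart of the proof is then the effectiveness of $c^{-1}(\varepsilon)$. I would split $g=g_0+g_1$, where $g_0=a_1 1_{D}$ with $D:=d(U_1)$ is supported on $\mg^{(0)}$ and $g_1=\sum_{i\ne 1}a_i 1_{C_i}$ with $C_i:=U_1^{-1}U_i$ is supported off $\mg^{(0)}$. For each off-diagonal bisection $C_i$ let $\theta_i:=r\circ(d|_{C_i})^{-1}$ be the associated partial homeomorphism of $\mg^{(0)}$; effectiveness forces its fixed-point set $\{u\in d(C_i):\theta_i(u)=u\}=d(C_i\cap\Iso(c^{-1}(\varepsilon)))$ to have empty interior, since an interior point would yield a nonempty open subset of $\Iso(c^{-1}(\varepsilon))$ disjoint from $\mg^{(0)}$. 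I would then produce a nonempty compact open $V\subseteq D$ with $1_V 1_{C_i}1_V=0$ for all $i$ by induction on the finitely many $C_i$: at each stage, if the current $V$ meets $d(C_i)$ then, as the empty-interior fixed-point set cannot contain the open set $V\cap d(C_i)$, I pick $u_0\in V\cap d(C_i)$ with $\theta_i(u_0)\ne u_0$ and use Hausdorffness, continuity of $\theta_i$, and ampleness to shrink to a compact open $V\ni u_0$ with $\theta_i(V)\cap V=\emptyset$. Because $V\subseteq D$ gives $1_V g_0 1_V=a_1 1_V$, for such a final $V$ we obtain $1_V g 1_V=a_1 1_V$, whence $\pi(a_1 1_V)=\pi(1_V)\pi(g)\pi(1_V)=0$; taking $B=V$ and $r=a_1$ completes the proof.

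I expect the construction of $V$ to be the main obstacle, since it is precisely the point where effectiveness of $c^{-1}(\varepsilon)$ is converted into the algebraic identity $1_V 1_{C_i}1_V=0$, and controlling the finitely many off-diagonal bisections simultaneously is what forces the inductive shrinking and uses the Hausdorff ample structure. The reduction carried out in the first two paragraphs is purely formal manipulation of the relations (R1)--(R3) together with the convolution identity $1_U 1_V=1_{UV}$.
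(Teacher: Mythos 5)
Your proof is correct, and there is nothing in the paper to compare it against: the paper does not prove this lemma but simply quotes it from \cite[Theorem~3.4]{cm}, so your argument amounts to a self-contained reproduction of the standard proof from that reference. Both key moves are carried out properly: (1) since $\pi$ is graded, $\Ker(\pi)$ contains a nonzero homogeneous $f=\sum_i a_i1_{U_i}$ over mutually disjoint $U_i\in B^{\rm co}_\gamma(\mg)$, and left-multiplying by $1_{U_1^{-1}}$ lands in $A_R(\mg)_\varepsilon\cong A_R(c^{-1}(\varepsilon))$ with diagonal part exactly $a_11_{d(U_1)}$ --- here the mutual disjointness of the $U_i$ is precisely what gives $U_1^{-1}U_i\cap\mg^{(0)}=\emptyset$ for $i\neq 1$, since $x^{-1}y\in\mg^{(0)}$ with $x\in U_1$, $y\in U_i$ forces $x=y$; and (2) effectiveness of $c^{-1}(\varepsilon)$ is correctly converted into the statement that each fixed-point set $d\big(C_i\cap\Iso(c^{-1}(\varepsilon))\big)$ has empty interior, which drives the inductive shrinking to a nonempty compact open $V\subseteq d(U_1)$ with $1_V1_{C_i}1_V=0$ for all $i$; the induction is stable because $V'\subseteq V$ and $VC_jV=\emptyset$ imply $V'C_jV'=\emptyset$, and then $1_Vg1_V=a_11_V\in\Ker(\pi)$ finishes the proof. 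The only points worth flagging are cosmetic: you should observe that at least one $U_i$ is nonempty (and reorder so that it is $U_1$), and that the $\gamma$-homogeneity of $f$ guarantees every $U_i$ with $a_i\neq 0$ in the expression \eqref{express} is $\gamma$-graded (since the $U_i$ are disjoint, $f|_{U_i}=a_i\neq 0$ forces $U_i\subseteq\supp(f)\subseteq c^{-1}(\gamma)$). Neither affects the validity of the argument.
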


The following key lemma will be used to determine the annihilator ideal of the
$A_{R}(\mg)$-module $RW$. This is a generalisation of~\cite[Proposition 4.4]{bcfs}
adapted to the graded setting. Recall that if $\mg$ is a graded groupoid with grading
given by the continuous $1$-cocycle $c : \mg \to \G$, then $c^{-1}(\varepsilon)$ is a
(trivially graded) clopen subgroupoid of $\mg$.

\begin{lem} \label{keylemma}
Let $W\subseteq \mg^{(0)}$ be a $\Gamma$-aperiodic invariant  subset and $\pi_{W}:
A_{R}(\mg)\xra\END_R(RW)$ the homomorphism of $\G$-graded algebras given in
\eqref{gradedhom}. Then $\pi_{W}$ is injective if and only if $W$ is dense in $\mg^{(0)}$
and $c^{-1}(\varepsilon)$ is effective.
\end{lem}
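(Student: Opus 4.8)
The statement is a biconditional, and I would prove the two implications separately, treating the forward direction by contraposition. The main tools are the graded uniqueness theorem (Lemma~\ref{gut}), the annihilator containment of Lemma~\ref{lemmaann}, and the explicit action formula~\eqref{moduleaction}, together with the observation that $RW$ is \emph{free} on the basis $W$, so that $ru = 0$ with $u \in W$ forces $r = 0$.

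For sufficiency, suppose $W$ is dense and $c^{-1}(\varepsilon)$ is effective. Since $\pi_{W}$ is a homomorphism of $\G$-graded algebras, I would apply Lemma~\ref{gut}: were $\Ker(\pi_{W}) \neq 0$, it would yield a nonempty compact open $B \subseteq \mg^{(0)}$ and $r \in R \setminus \{0\}$ with $\pi_{W}(r1_{B}) = 0$. Density then provides some $u \in B \cap W$, and~\eqref{moduleaction} gives $\pi_{W}(r1_{B})(u) = ru \neq 0$, a contradiction. Hence $\pi_{W}$ is injective.

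For necessity, assume $\pi_{W}$ is injective. \emph{Density.} If $W$ were not dense, then $U := (\mg^{(0)} \setminus W)^{\circ}$ would be nonempty, and since $\mg$ is ample I could choose a nonempty compact open $B \subseteq U$; then $1_{B} \in A_{R}(\mg_{U}) \subseteq \Ann_{A_{R}(\mg)}(RW) = \Ker(\pi_{W})$ by Lemma~\ref{lemmaann}, contradicting injectivity. \emph{Effectiveness.} If $c^{-1}(\varepsilon)$ were not effective, applying the effectiveness criterion to this clopen subgroupoid (whose unit space is $\mg^{(0)}$) produces a nonempty compact open bisection $B$ with $c(B) = \{\varepsilon\}$, $B \cap \mg^{(0)} = \emptyset$, and $B \subseteq \Iso(\mg)$. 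As $B$ is a bisection contained in the isotropy, $r(x) = d(x)$ for all $x \in B$, so $d(B) = r(B)$ is compact open in $\mg^{(0)}$. I would then test the element $1_{B} - 1_{d(B)}$, which is nonzero because its value at any $x \in B$ is $1$ while $x \notin d(B)$. Evaluating via~\eqref{moduleaction}, for each $u \in W$ both $\pi_{W}(1_{B})(u)$ and $\pi_{W}(1_{d(B)})(u)$ equal $u$ if $u \in d(B)$ and vanish otherwise (for $1_{B}$ one uses $r(x) = d(x)$). Thus $1_{B} - 1_{d(B)} \in \Ker(\pi_{W}) \setminus \{0\}$, contradicting injectivity.

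The routine parts are the two density arguments and the direct citation of Lemma~\ref{gut}. The crux of the proof is the effectiveness half of the necessity direction: one must extract from non-effectiveness an \emph{$\varepsilon$-graded} isotropy bisection disjoint from the unit space and recognise that $1_{B} - 1_{d(B)}$ is the correct kernel witness. The essential point making the computation work is that on the isotropy the range and domain of each $x \in B$ coincide, so that $\pi_{W}(1_{B})$ and $\pi_{W}(1_{d(B)})$ act identically on the basis $W$.
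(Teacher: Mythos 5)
Your proposal is correct and follows essentially the same route as the paper's proof: the graded uniqueness theorem (Lemma~\ref{gut}) combined with density handles sufficiency, and the kernel witnesses $1_{B}$ (for density) and $1_{B}-1_{d(B)}$ (for effectiveness) handle necessity, exactly as in the paper. The only cosmetic difference is that you route the density half of necessity through Lemma~\ref{lemmaann} rather than observing directly that a compact open subset of $\mg^{(0)}$ disjoint from $W$ acts as zero.
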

\begin{proof} Suppose $\pi_{W}$ is injective and  there exists an open subset $K$ of
$\mg^{(0)}$ such that $K\cap W=\emptyset$.  We have $K=\bigcup_{i} B_{i}$, where  $B_{i}$
are compact open bisections of $\mg$. So $B_{i}\cap W=\emptyset$ for each $i$, giving
$\pi_{W}(1_{B_{i}})=0$, a contradiction. Thus for any open subset $K$ of $\mg^{(0)}$,
$K\cap W\neq \emptyset$. Therefore $W$ is dense in $\mg^{(0)}$.

Suppose now that $c^{-1}(\varepsilon)$ is not effective. Then there exists a nonempty
compact open bisection $B\subseteq c^{-1}(\varepsilon) \setminus \mg^{(0)}$ such that
$d(b)= r(b)$ for all $b \in B$. We have that $d(B)\neq B$ and that $B$ is a compact open
bisection of $\mg$. Thus $1_{B}-1_{d(B)}\in \Ker (\pi_{W})$. This is a contradiction.
Hence, $c^{-1}(\varepsilon)$ is effective.

For the converse, Lemma~\ref{gut} implies that it suffices to prove that for any compact
open subset $B\subseteq \mg^{(0)}$ and $r\in R\setminus \{0\}$, $\pi_{W}(r1_{B})\neq 0$.
Since $W$ is dense in $\mg^{(0)}$,  we have $B\cap W\neq \emptyset$. There exists $w\in
B\cap W$ such that $\pi_{W}(r1_{B})(w)\neq 0$, proving $\pi_{W}(r1_{B})\neq 0$.
\end{proof}

If the group $\G$ is trivial, then by Lemma \ref{keylemma}, for an invariant subset
$W\subseteq \mg^{(0)}$, the homomorphism $\pi_{W}:A_{R}(\mg)\xra \End_{R}(RW)$ is
injective if and only if $W$ is dense in $\mg^{(0)}$ and the groupoid $\mg$ is effective.

The following is the main result of this section.

\begin{thm}\label{eqthm} Let $\mg$ be a $\Gamma$-graded ample  Hausdorff groupoid, $R$ a commutative ring with identity and $A_R(\mg)$ the Steinberg algebra associated to $\mg$. The following statements are equivalent:
\begin{enumerate}
\item[(i)] Let $W\subseteq \mg^{(0)}$ be a $\Gamma$-aperiodic invariant  subset and
    $W^{-}$ the closure of $W$. Then the groupoid $\big(c|_{\mg_{W^-}}\big)^{-1}(\varepsilon)$ is effective;

\medskip

\item[(ii)] For any $\Gamma$-aperiodic invariant  subset $W\subseteq \mg^{(0)}$,
\[\Ann_{A_{R(\mg)}}(RW)=A_{R}(\mg_{U}),\] where $U=(\mg^{(0)}\setminus W)^\circ$ is the interior of the invariant subset $\mg^{(0)}\setminus W$.
\end{enumerate}
\end{thm}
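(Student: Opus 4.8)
The plan is to fix a single $\Gamma$-aperiodic invariant subset $W$ and show that condition~(i) and condition~(ii) are equivalent \emph{for this $W$}; since both statements are universally quantified over such $W$, this yields the theorem. The starting observation is that, writing $U=(\mg^{(0)}\setminus W)^\circ$, the complement-of-interior identity for closures gives $\mg^{(0)}\setminus W^- = U$, so $\mg^{(0)}=W^-\sqcup U$ with $U$ open and $W^-$ closed. Both are invariant: $W^-$ by the observation recorded just before Lemma~\ref{lemmaann}, and $U$ because the complement of an invariant set is invariant and the interior of an invariant set is invariant. Consequently $\mg_U=d^{-1}(U)=\mg\setminus\mg_{W^-}$, and $\mg_{W^-}=d^{-1}(W^-)$ is a closed subgroupoid which is again ample and Hausdorff, graded by $c|_{\mg_{W^-}}$, with a basis of graded compact open bisections given by the sets $B\cap\mg_{W^-}$ for $B\in B^{\rm co}_*(\mg)$; thus the standing hypotheses (and hence Lemma~\ref{keylemma}) apply to it.

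Next I would introduce the restriction homomorphism $\rho\colon A_R(\mg)\to A_R(\mg_{W^-})$, $\rho(f)=f|_{\mg_{W^-}}$. The point that makes $\rho$ multiplicative is that, since $W^-$ is invariant, any factorisation $x=ab$ of an element $x\in\mg_{W^-}$ forces $a,b\in\mg_{W^-}$ (from $d(b)=d(x)\in W^-$ invariance gives $r(b)=d(a)\in W^-$, and $r(a)=r(x)\in W^-$), so convolution is compatible with restriction. Its kernel consists of those $f$ vanishing on $\mg_{W^-}$, i.e. supported on $\mg\setminus\mg_{W^-}=\mg_U$; since a function in $C_c(\mg,R)$ with $R$ discrete is locally constant, its support is clopen and equals its nonvanishing set, so $\Ker(\rho)=A_R(\mg_U)$. (This is the standard open-invariant-ideal/closed-invariant-quotient description for Steinberg algebras.)

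I would then show that the module action $\pi_W$ factors as $\pi_W=\pi_W'\c\rho$, where $\pi_W'\colon A_R(\mg_{W^-})\to\END_R(RW)$ is the action on $RW$ arising from the invariant subset $W\subseteq W^-=\mg_{W^-}^{(0)}$ via~\eqref{moduleaction}. Indeed, for a compact open bisection $B$ of $\mg$ and $w\in W$, if $w=d(\gamma)$ for some $\gamma\in B$ then $r(\gamma)\in W$ by invariance of $W$, so $\gamma\in B\cap\mg_{W^-}$; hence $\pi_W(1_B)(w)$ depends only on $B\cap\mg_{W^-}=\rho(1_B)$, giving the factorisation. Therefore
\[
\Ann_{A_R(\mg)}(RW)=\Ker(\pi_W)=\rho^{-1}\big(\Ker(\pi_W')\big)=\rho^{-1}\big(\Ann_{A_R(\mg_{W^-})}(RW)\big).
\]

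Finally I would apply Lemma~\ref{keylemma} to $\mg_{W^-}$ with cocycle $c|_{\mg_{W^-}}$. Here $W$ is a $\Gamma$-aperiodic invariant subset of $\mg_{W^-}^{(0)}=W^-$ (its isotropy in $\mg_{W^-}$ coincides with its isotropy in $\mg$, since any $x$ with $d(x)=r(x)=u\in W$ already lies in $\mg_{W^-}$), and $W$ is dense in $W^-$ by definition of the closure. Hence $\pi_W'$ is injective if and only if $(c|_{\mg_{W^-}})^{-1}(\varepsilon)$ is effective, i.e. if and only if~(i) holds. Combining this with the displayed identity: (i) holds $\iff \Ann_{A_R(\mg_{W^-})}(RW)=0 \iff \Ann_{A_R(\mg)}(RW)=\rho^{-1}(0)=\Ker(\rho)=A_R(\mg_U) \iff$ (ii) holds. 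The main obstacle is the second paragraph: verifying cleanly that $\rho$ is a well-defined graded algebra homomorphism with $\Ker(\rho)=A_R(\mg_U)$, which rests entirely on the invariance of $W^-$ controlling factorisations in $\mg_{W^-}$; once this restriction/quotient machinery and the factorisation $\pi_W=\pi_W'\c\rho$ are in place, the equivalence is a formal consequence of Lemma~\ref{keylemma} applied to the restricted groupoid.
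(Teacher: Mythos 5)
Your proposal is correct and follows essentially the same route as the paper: both reduce the statement to Lemma~\ref{keylemma} applied to the restriction $\mg_{W^-}$ via the identification $A_R(\mg)/A_R(\mg_U)\cong A_R(\mg_{W^-})$, the only difference being that you verify the restriction homomorphism and its kernel by hand where the paper cites the exact sequence $0\to A_R(\mg_U)\to A_R(\mg)\to A_R(\mg_D)\to 0$ from \cite[Lemma 3.6]{cmhs}. Your explicit factorisation $\pi_W=\pi_W'\circ\rho$ and the resulting single biconditional chain also make the converse direction, which the paper treats very tersely, cleanly symmetric with the forward one.
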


\begin{proof} $(i)\Rightarrow (ii)$
Let $W\subseteq \mg^{(0)}$ be a $\Gamma$-aperiodic invariant  subset. By
Theorem~\ref{basicsimple}, $RW$ is a graded $A_R(\mg)$-module. By Lemma \ref{lemmaann},
we have $A_{R}(\mg_{U})\subseteq \Ann_{A_{R}(\mg)}(RW)$ with $U=(\mg^{(0)}\setminus
W)^\circ$. It follows that $RW$ is an $A_{R}(\mg)/A_{R}(\mg_{U})$-module.  By \cite[Lemma
3.6]{cmhs}, we have an exact sequence of canonical ring homomorphisms
\[
    0 \longrightarrow A_R(\mg_U) \longrightarrow A_R(\mg) \longrightarrow A_R(\mg_D)\longrightarrow 0.
\]
The homomorphisms are induced by extensions from $\mg_U$ to $\mg$ and restrictions from
$\mg$ to $\mg_D$, respectively. One can easily check that the homomorphisms are graded.
It therefore follows that the quotient algebra $A_{R}(\mg)/A_{R}(\mg_{U})$ is graded
isomorphic to $A_{R}(\mg_{D})$, where $D=\mg^{(0)}\setminus U$. It follows that $RW$ is a
$\G$-graded $A_{R}(\mg_{D})$-module (this also follows from Theorem~\ref{basicsimple}).
We denote by $\widehat{\pi}_{W}:A_{R}(\mg_{D})\xra{\END}_R(RW)$ the induced graded
homomorphism. Observe that $(\mg_D)^{(0)}=D$ is the closure of $W$. Thus by Lemma
\ref{keylemma}, the homomorphism $\widehat{\pi}_{W}$ is injective. This implies that $RW$
is a faithful $A_{R}(\mg_{D})$-module. Hence, the annihilator ideal of $RW$ as an
$A_{R}(\mg)$-module is $A_{R}(\mg_{U})$.

$(ii)\Leftarrow (i)$ Let $D$ denote the closure of $W$ in $\mg^{(0)}$. Then $RW$ is a
faithful $A_{R}(\mg_{D})$-module. So the result follows from Lemma~\ref{keylemma}.
\end{proof}

Recall that a groupoid $\mg$ is \emph{strongly effective} if for every nonempty closed
invariant subset $D$ of $\mg^{(0)}$, the groupoid $\mg_{D}$ is effective.

\begin{rmk} \begin{enumerate}
\item[(1)] If $c^{-1}(\varepsilon)$ is strongly effective, then Theorem \ref{eqthm}(i)
    holds. In fact, a closed invariant subset $D$ of the unit space of $\mg$ is in
    particular a closed $c^{-1}(\varepsilon)$-invariant subset of $\mg^{(0)}$. We have
    $c^{-1}(\varepsilon)_{D} = c^{-1}(\varepsilon) \cap \mg_{D} =
    \big(c|_{\mg_D}\big)^{-1}(\varepsilon)$. Hence, Theorem \ref{eqthm}(i) follows directly.
    Example~\ref{notseffective} below, on the other hand, shows that Theorem~\ref{eqthm}(i)
    does not imply that $c^{-1}(\varepsilon)$ is strongly  effective.

\item[(2)] Resume the notation of Example~\ref{exmap}, so $u =
    \alpha\beta\alpha^2\beta\cdots \in E^\infty$. Let $D$ be the closure of the
    $\mathbb{Z}$-aperiodic invariant subset $[u]\subseteq
    \mg_{E}^{(0)}$. As we saw in that example, $D$ is not itself $\mathbb{Z}$-aperiodic, because it contains $\a^\infty$.
\end{enumerate}
\end{rmk}

\begin{exm} \label{notseffective}
It is easy to construct examples of $\G$-graded groupoids with no $\G$-aperiodic points.
For example, let $X$ be the Cantor set. Regard $\mg=X\times \mathbb{Z}^{2}$ as a groupoid
with unit space $X \times \{0\}$ identified with $X$ by setting $r(x,m) = x = d(x,m)$ and
defining composition and inverses by $(x, n)(x, m)=(x, m+n)$ and $(x, m)^{-1} = (x, -m)$.
The map $c:\mg\xra \mathbb{Z}$ given by $c(x, (m_{1}, m_{2}))=m_{1}$ is a continuous
$1$-cocycle. We have $c^{-1}(0) = X\times (\{0\}\times \mathbb{Z})$, which is not
effective (for example $X \times \{(0,1)\}$ is a compact open bisection contained in the
isotropy subgroupoid of $c^{-1}(0)$). Moreover, $\mg^{(0)}$ has no $\mathbb{Z}$-aperiodic
points because $\{u\} \times (\mathbb{Z} \times \{0\}) \subseteq \Iso(u) \setminus
c^{-1}(0)$ for all $u \in \mg^{(0)}$; so every $u \in \mg^{(0)}$ is
$\mathbb{Z}$-periodic.
\end{exm}

Applying Theorem~\ref{eqthm} to the trivial grading, we obtain a new characterisation of
strong effectiveness.

\begin{cor}
Let $\mg$ be an ample Hausdorff groupoid, and $R$ be a commutative ring with identity.
Then $\mg$ is strongly effective if and only if for any invariant subset $W$ of
$\mg^{(0)}$, the annihilator of the $A_{R}(\mg)$-module $RW$ is $A_{R}(\mg_{U})$, where
$U=(\mg^{(0)}\setminus W)^\circ$.
\end{cor}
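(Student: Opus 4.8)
The plan is to derive the corollary by specialising Theorem~\ref{eqthm} to the trivial cocycle $c : \mg \to \{\varepsilon\}$, for which $c^{-1}(\varepsilon) = \mg$. Under this grading the condition $\Iso(u) \subseteq c^{-1}(\varepsilon)$ holds for every $u \in \mg^{(0)}$, so \emph{every} unit is $\Gamma$-aperiodic and consequently every invariant subset $W \subseteq \mg^{(0)}$ is $\Gamma$-aperiodic. Hence the qualifier ``$\Gamma$-aperiodic'' in both clauses of Theorem~\ref{eqthm} disappears, and clause (ii) becomes precisely the assertion $\Ann_{A_R(\mg)}(RW) = A_R(\mg_U)$ for every invariant $W$, with $U = (\mg^{(0)} \setminus W)^\circ$ --- that is, the right-hand side of the corollary.

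First I would rewrite clause (i) of Theorem~\ref{eqthm} in this setting. Since the restriction of the trivial cocycle to any subgroupoid is again trivial, we have $\big(c|_{\mg_{W^-}}\big)^{-1}(\varepsilon) = \mg_{W^-}$, so clause (i) reads: for every invariant subset $W$, the restricted groupoid $\mg_{W^-}$ is effective. It then remains to check that this reformulated statement is equivalent to strong effectiveness of $\mg$.

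This equivalence is the only step needing an argument, and it is straightforward. If the reformulated clause (i) holds, then given any nonempty closed invariant subset $D$ we may take $W = D$, so that $W^- = D$ and $\mg_D = \mg_{W^-}$ is effective; thus $\mg$ is strongly effective. Conversely, if $\mg$ is strongly effective, then for any invariant $W$ the closure $W^-$ is again a closed invariant subset (closures of invariant subsets are invariant, as recorded just before Lemma~\ref{lemmaann}), so $\mg_{W^-}$ is effective, giving clause (i); the degenerate case $W^- = \emptyset$ is harmless, since the empty groupoid is vacuously effective. With clause (i) identified with strong effectiveness and clause (ii) identified with the annihilator condition, the equivalence (i)$\Leftrightarrow$(ii) of Theorem~\ref{eqthm} yields the corollary. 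I expect no genuine obstacle here beyond the bookkeeping observation that the closures $W^-$ range over exactly the closed invariant subsets of $\mg^{(0)}$.
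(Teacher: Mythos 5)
Your proposal is correct and is exactly the paper's route: the paper derives this corollary by applying Theorem~\ref{eqthm} to the trivial grading, under which every unit is aperiodic and clause (i) reduces to effectiveness of $\mg_{W^-}$ for all invariant $W$, which is strong effectiveness since the closures $W^-$ are precisely the closed invariant subsets. Your bookkeeping of the degenerate empty case and the identification of the two clauses matches what the paper leaves implicit.
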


\section{Acknowledgements}
The authors would like to acknowledge Australian Research Council grants DP150101598 and
DP160101481. The first-named author was partially supported by DGI-MINECO (Spain) through
the grant MTM2014-53644-P.

\end{document}